\documentclass[11pt]{amsart}





\usepackage{amsmath, amsfonts, amssymb,amsthm}
\usepackage{amstext}
\usepackage{mathrsfs}
\usepackage{graphicx}
\usepackage{float,epsf,subfigure}
\usepackage[all,cmtip]{xy}
\usepackage{hyperref}
\usepackage{algorithm}
\usepackage{algorithmic}
\usepackage{mathtools}
\usepackage{float}
\usepackage{bm}
\usepackage{graphicx}
\setlength{\parskip}{.1 in plus 2pt minus 2pt}
\setlength{\textheight}{7.8 in}

\theoremstyle{plain}
\newtheorem{theorem}{Theorem}[section]

\newtheorem{cor}[theorem]{Corollary}

\newtheorem{def-thm}[theorem]{Definition-Theorem}
\newtheorem{lemma}[theorem]{Lemma}

\newtheorem{defi}[theorem]{Definition}

\newtheorem*{tha}{Theorem A}

\newtheorem{remark}{Remark}

\newtheorem{property}[theorem]{Property}

\theoremstyle{definition}
\newtheorem{example}[theorem]{Example}
\newtheorem{question}{Question}

\def\min{\mathop{\mathrm{min}}}

\begin{document}
\title[Value distribution of meromorphic mappings]{Value distribution of meromorphic mappings  on complete K\"ahler connected sums with non-parabolic ends}
\author[X.-J. Dong]
{Xianjing Dong}

\address{School of Mathematical Sciences \\ Qufu Normal University \\ Qufu, Jining, Shandong, 273165, P. R. China}
\email{xjdong05@126.com}


\subjclass[2010]{32H30; 32A22; 32H25.} 
\keywords{Nevanlinna theory; second main theorem; Picard's theorem; heat kernel; connected sum.}
\date{}
\maketitle \thispagestyle{empty} \setcounter{page}{1}

\begin{abstract} 
All   harmonic functions on $\mathbb C^m$ possess  Liouville's property, which is  well-known as the Liouville's theorem. 
In 1979, Kuz'menko and Molchanov discovered   a phenomenon  that the  Liouville's property is not rigid for some  harmonic functions on 
the connected sum $\mathbb C^m\#\mathbb C^m,$ 
where  there exist   a large number of   non-constant bounded harmonic functions.  
This  discovery motivates  us  to explore    conditions 
under which  harmonic functions  possess  Liouville's property.  
In this paper,  
  we discuss    the    value distribution      of     meromorphic mappings from  complete K\"ahler connected sums with non-parabolic ends into  complex projective manifolds. 
   Under a  geometric  condition, 
  we establish  a second main theorem in Nevanlinna theory. 
  As a  consequence,   we  prove     that  the   Cauchy-Riemann   equation ensures      
            the   rigidity  of   Liouville's property  for harmonic 
        functions 
      if  such connected sums satisfy  a volume growth condition.

 \end{abstract} 
 
\vskip\baselineskip

\vskip\baselineskip

\tableofcontents
\newpage
\setlength\arraycolsep{2pt}
\medskip

\section{Introduction}
\vskip\baselineskip

\subsection{Motivation}~
 
Each  bounded harmonic function on $\mathbb C^m$ is  a constant, which is well-known as  the  Liouville's theorem or Liouville's property. 
However,  it doesn't 
   always hold   for a harmonic function    on the connected sum  $\mathbb C^m\#\mathbb C^m$ which supports   a K\"ahler structure.  
   As early  as 1979, Kuz'menko-Molchanov \cite{M-K} considered the 
 connected sum $\mathbb R^m\#\mathbb R^m,$  as an example of    manifolds (see Fig. 1),  
  where the Liouville's property fails to some  bounded harmonic functions. 
       Later in 1999, 
        Grigor'yan-Saloff-Coste (see \cite{Gri6, Gri}) also observed  this      phenomenon  
  on  a more general connected sum of complete weighted manifolds 
    with non-negative Ricci curvature,
       on which  they  constructed a non-constant bounded harmonic function in terms of  the heat kernel.
          In order to serve  the present  paper,  
                    we  would like to  provide    a  counterexample  to the  Liouville's property on $\mathbb C^m\#\mathbb C^m,$  
                                  which is     a special case of  Grigor'yan-Saloff-Coste's theorem   
  (see \cite{Gri}, Proposition 6.3). 
  
      \begin{figure}[htbp]
  \centering
  \includegraphics[width=13cm]{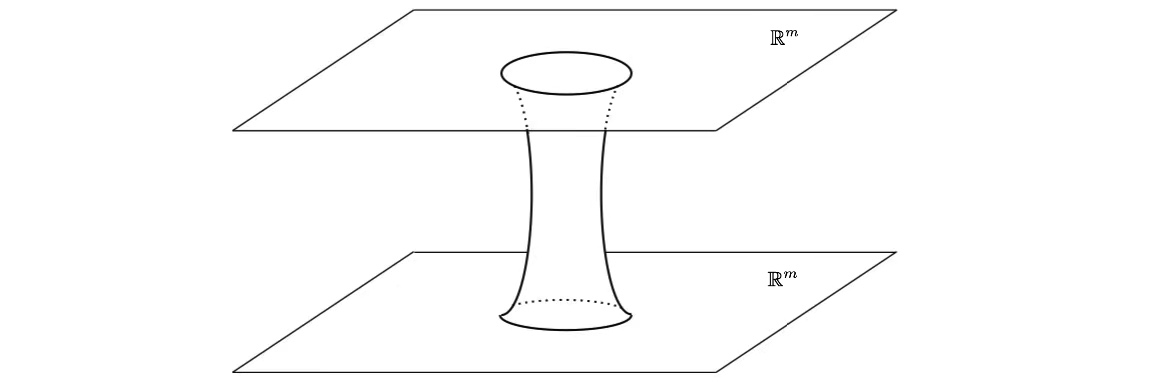}
  \caption{$\mathbb R^m\#\mathbb R^m$} 
 \end{figure} 

Let  $\#^\vartheta\mathbb C^m$ denote   the K\"ahler  connected sum of $\vartheta$ copies of $\mathbb C^m.$ 
 Precisely,  $\#^\vartheta\mathbb C^m$ is a K\"ahler  manifold which  satisfies  that  
  there exists  a  compact subset $K\subset \#^\vartheta\mathbb C^m$ (with nonempty interior) 
    such that  $\#^\vartheta\mathbb C^m\setminus K$ is a disjoint union 
    of  open subsets $E_1,\cdots, E_\vartheta,$ here  
     each $E_j$ is 
          analytically isometric to 
          $\mathbb C^m\setminus K_j$ for a compact subset $K_j\subset\mathbb C^m.$ 
      Each $E_j$  can  be  identified with $\mathbb C^m\setminus K_j.$ 
 We   refer to  $K$   as  the \emph{central part} of 
$\#^\vartheta\mathbb C^m,$  and refer  to $E_1,\cdots, E_\vartheta$ as  the \emph{ends} of $\#^\vartheta\mathbb C^m.$
Note that  $\mathbb C^m=\#^\vartheta\mathbb C^m$  with $\vartheta=1.$ 
For  $x\in\#^\vartheta\mathbb C^m,$ set 
$$|x|=\sup_{y\in K}d(x,y),$$
where  $d(x,y)$ is  the Riemannian distance between  $x, y.$ 
  Fix a reference point $o$ in the interior of $K.$ Let   
  $B(r)$  be the  geodesic ball  centered at $o$ with radius $r$ in $ \#^\vartheta\mathbb C^m.$ Put    
$$V_{0}(r)=\min_{1\leq j\leq\vartheta}V_j(r),$$
where   
$$V_j(r)={\rm{Vol}}\left(B(r)\cap (K\cup E_j)\right), \ \ \ \  j=1,\cdots,\vartheta.$$
Again, put 
  $$ i_x=\begin{cases}
0, \   &   x\in K; \\
j,  \  &   x\in E_j \ \text{for some} \ j.
\end{cases}$$
\ \ \ \ Let $u, v$ be two non-negative  functions defined on the same domain. We then write 
$u\simeq v,$ if there are constants $\alpha,\beta>0$ such that $\alpha v\leq u\leq\beta v.$ 
Put $w^+=\max\{w, 0\},$ i.e.,  the positive part of a real function $w.$
Let  $\mathscr H(\#^\vartheta\mathbb C^m)$ denote  the space  of   harmonic  functions on $\#^\vartheta\mathbb C^m,$ and 
 $\mathscr H^\flat(\#^\vartheta\mathbb C^m)$ denote  the space  of bounded   harmonic  functions on $\#^\vartheta\mathbb C^m,$ 
 and $\mathscr H^+(\#^\vartheta\mathbb C^m)$ denote the linear space spanned by the set of 
  positive harmonic functions on $\#^\vartheta\mathbb C^m.$
 
  The following theorem is due to   Grigor'yan-Saloff-Coste \cite{Gri},  Li-Tam\cite{L-T1} and  Sung-Tam-Wang \cite{S-T-W},  as  a  counterexample to the Liouville's property.  

\begin{tha}  There exists a positive  harmonic function $h\in \mathscr H(\#^\vartheta\mathbb C^m)$ such that  
$$h(x)\simeq 1+\left(\int_1^{|x|}\frac{dt}{V_{i_x}(\sqrt t)}\right)^+.$$
Hence, the Liouville's property is not rigid for $ \mathscr H(\#^\vartheta\mathbb C^m).$
\end{tha}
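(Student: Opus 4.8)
The statement asks to construct a positive harmonic function $h$ on $\#^\vartheta\mathbb{C}^m$ with the precise asymptotic $h(x) \simeq 1 + \left(\int_1^{|x|} \frac{dt}{V_{i_x}(\sqrt{t})}\right)^+$, thereby exhibiting the failure of the Liouville property. Let me sketch how I would approach this.

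The key observation is that on $\mathbb{C}^m$ (the $\vartheta = 1$ case), the function $\int_1^{|x|} \frac{dt}{V_1(\sqrt{t})}$ is essentially bounded since $V_1(r) \simeq r^{2m}$, so $\int_1^s \frac{dt}{t^m}$ converges for $m \geq 2$. But on the connected sum, the ends are "thinner" in the sense that the heat kernel decay along each end is governed by $V_j$, and a harmonic function can grow at different rates along different ends while remaining positive.
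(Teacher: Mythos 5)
Your proposal stops at the planning stage: it contains no construction of $h$, no proof of the two-sided estimate, and no argument for non-constancy, so there is nothing yet that could be checked against the cited result. (Note the paper itself does not prove Theorem A; it attributes it to Grigor'yan--Saloff-Coste, Li--Tam and Sung--Tam--Wang.) The concrete missing steps are: (i) an actual construction of $h$ --- the standard route is to fix an end $E_j$, solve the Dirichlet problem on an exhaustion $B(R)$ with boundary data $1$ on $\partial B(R)\cap E_j$ and $0$ on the other ends, and extract a limit $h_j$ with $0\le h_j\le 1$ by the maximum principle and elliptic estimates (equivalently, $h_j(x)$ is the probability that Brownian motion from $x$ eventually escapes through $E_j$); (ii) a proof of the stated two-sided bound, which requires the heat-kernel estimates of Theorems 2.9 and 2.11 (or the hitting-probability estimates of \cite{hit}) integrated in time, together with non-parabolicity of each end; and (iii) a proof that $h_j$ is non-constant, e.g.\ that $h_j\to 1$ along $E_j$ while $\limsup h_j<1$ along $E_i$ for $i\ne j$.

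There is also a substantive misstatement in your ``key observation.'' For $\#^\vartheta\mathbb{C}^m$ every end has $V_j(r)\simeq r^{2m}$, so the integral $\int_1^{|x|}dt/V_{i_x}(\sqrt t)$ converges on \emph{every} end and the asymptotic in Theorem A reduces to $h\simeq 1$: the counterexample function is \emph{bounded}, not ``growing at different rates along different ends,'' and the ends are not ``thinner'' than in the $\vartheta=1$ case. Consequently the displayed asymptotic alone cannot distinguish $h$ from a constant; the failure of Liouville's property must come from point (iii) above (different boundary values at infinity on different ends), which your sketch does not address.
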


 In fact,  Li-Tam  \cite{L-T1, L-T2}  proved          that the dimension  of the space of bounded harmonic functions 
          is related   to   the number of ends of a manifold. According to  
                  their theorem  (see \cite{L-T1}, Theorem 7.3), we  have    
$$\dim\mathscr H^\flat(\#^\vartheta\mathbb C^m)=\dim\mathscr H^+(\#^\vartheta\mathbb C^m)=\vartheta.$$
\ \ \    \    More generally,   given  a   K\"ahler  connected sum  $M=M_1\#\cdots\# M_\vartheta,$ where $M_1,\cdots, M_\vartheta$ are 
 complete  non-compact K\"ahler manifolds with non-negative Ricci curvature.
  It is natural  to ask that under  which   conditions    a   harmonic function  on $M$  possesses        Liouville's property. 
  It is noticed    that  the  harmonic functions  (as a 
            counterexample to the Liouville's property) on $\#^\vartheta\mathbb C^m$  do not  satisfy    Cauchy-Riemann  equation. 
            So,  it     inspires us to wonder    
                                  whether  
                              the  Cauchy-Riemann equation maintains     the rigidity of    Liouville's property  for  harmonic functions on $M.$
  Let  $\mathscr O(M)$ be  the space  of   holomorphic  functions on $M.$  We propose the following question. 

\begin{question}
\emph{Is  Liouville's property rigid   for} $\mathscr O(M)$?
\end{question}

    Picard's little theorem asserts   that every meromorphic function on $\mathbb C^m$   is    a  constant  if it omits three distinct values, which   generalizes   the   Liouville's  theorem.  
   Let $\mathscr M(M)$ be the space   of meromorphic functions on $M.$
  Further, we  propose   the following  question. 

\begin{question}
\emph{Does  Picard's little theorem hold   for}  $\mathscr M(M)$?
\end{question}
The main purpose of this   paper is to answer the above questions. In doing so, 
we  shall  generalize    Nevanlinna theory \cite{Nev} to   complete K\"ahler connected sums with 
non-parabolic ends using  the global Green function approach. As a consequence of our main theorem, 
we  show   that the Picard's little theorem holds   for $\mathscr M(M)$ when  $M$ satisfies a volume growth condition. 
 In particular, the 
Cauchy-Riemann  equation   maintains   the  rigidity of   Liouville's property  for   $\mathscr O(M)$  under this condition.

\subsection{Main Results}~
 
We  give a brief introduction to  the main results in this  paper. 
 Let 
       $$M=M_1\#\cdots\# M_\vartheta=\left(K; \{M_j\}_{j=1}^\vartheta; \{E_j\}_{j=1}^\vartheta\right)$$
be a complete K\"ahler connected sum with $\vartheta$ non-parabolic  ends, where  all  
      $M_j^,s$ are complete non-compact  K\"ahler manifolds satisfying   $(PH)$  (see  Property \ref{ap2}  in Section 2.2 and Definition \ref{defcc} in Section 3.2), with Ricci curvature bounded from below by a constant.
       Note that $E_j^,s$ are called  the ends of $M$ and $K$ is called  the central part of $M$ (see definition  in Section 2.3). 
               Fix a reference point $o$ in the  interior of $K.$
                               Let  $\rho(x)$  be  the Riemannian distance function of $x$ from $o,$ and  $V(r)$ be  the Riemannian volume of  a geodesic ball $B(r)$ centered at $o$ with radius $r$ in $M.$ Set 
$$V_{\rm min}(r)=\min_{1\leq j\leq\vartheta}V_j(r), \ \ \ \   V_{\max}(r)=\max_{1\leq j\leq\vartheta}V_j(r)$$
with 
$$V_j(r)={\rm{Vol}}\left(B(r)\cap (K\cup E_j)\right), \ \ \ \  j=1,\cdots,\vartheta.$$
\ \ \ \ Let $X$ be a complex projective manifold of complex dimension not greater than that of $M.$  We  can put a positive line bundle $L$ over $X.$ Let 
$f: M\to X$ be a meromorphic mapping. 
 With the two-sided bounds of the heat kernel of $M,$ 
we  can construct  a family $\{\Delta(r)\}_{r>0}$ of exhaustive precompact domains  for $M$ (see definition in Section 3.3). 
Taking  a divisor  $D\in|L|.$ 
On $\Delta(r),$ we can define   Nevanlinna's functions  $T_f(r, L), m_f(r, D),  \overline{N}_f(r,D)$ and $T(r, \mathscr R),$ where $\mathscr R:=-dd^c\log\det(g_{i\bar j})$ denotes  the Ricci form associated to the metric  $g=(g_{i\bar j})$  of $M$ (see details  in Section 3.3). 
Let 
\begin{equation}\label{min1}
\kappa:=\min_{1\leq j\leq\vartheta}\kappa_j,
\end{equation}
 where $\kappa_j$ is  the lower bound  of Ricci curvature of $M_j$ for $j=1,\cdots,\vartheta.$
Again, set
 \begin{equation}\label{XI}
 \Xi(r, \delta, \kappa)=\frac{\left(|\kappa|+r^{-1}\right)\displaystyle\int_0^\infty\frac{e^{-\frac{r^2}{bt}}}{V_{\rm min}(\sqrt{t})}dt}{\left(r\displaystyle\int_0^\infty\frac{e^{-\frac{r^2}{at}}}{V_{\max}(\sqrt{t})}\frac{dt}{t}\right)^{1+\delta}}.
  \end{equation}
  
  We establish  a   second main theorem in Nevanlinna theory  as follows.  
     \begin{theorem}\label{main1}  Let $M=M_1\#\cdots\#M_\vartheta$ be a  
     complete K\"ahler connected sum with $\vartheta$  non-parabolic  ends, 
     where  all  $M_j^{,}s$ are   non-compact manifolds  satisfying   $(PH)$  in Property $\ref{ap2},$ with Ricci curvature bounded from below by a constant. 
     Let $X$ be a complex projective manifold of complex dimension not greater than  that  of $M.$
 Let $D\in|L|$ be a reduced divisor of simple normal crossing type, where $L$ is a positive line bundle over $X.$   Let  $f:M\rightarrow X$ be a  differentiably non-degenerate meromorphic mapping.    Then  for any  $\delta>0,$ there exists a subset $E_\delta\subset(0, \infty)$ of finite Lebesgue measure such that 
        \begin{eqnarray*}
&&T_f(r,L)+T_f(r, K_X)+T(r, \mathscr R) \\
&\leq& \overline N_f(r,D)+O\left(\log^+ T_{f}(r,L)+\log^+\Xi(r,\delta, \kappa)\right) 
         \end{eqnarray*}
    holds for all $r>0$ outside $E_\delta,$      where $\Xi(r,\delta,\kappa)$ is defined by $(\ref{XI}).$
\end{theorem}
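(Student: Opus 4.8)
The plan is to follow the classical scheme for proving a second main theorem via the logarithmic derivative lemma and negative curvature (Ahlfors--Schwarz) arguments, but adapted to the exhaustion $\{\Delta(r)\}_{r>0}$ coming from the heat kernel and global Green function of $M$. First I would reduce to the case $X=\mathbb P^N$ with $L=\mathcal O(1)$ by embedding $X$ via a high power of $L$; standard arguments let one pass from hyperplanes to a simple normal crossing divisor $D\in|L|$ on a general projective $X$ at the cost of bounded error terms absorbed into $O(\log^+T_f(r,L))$. Then, choosing sections $s_0,\dots,s_q$ cutting out the components of $D$, I would build a singular volume form / metric on $X$ with logarithmic poles along $D$ whose Ricci curvature dominates the Fubini--Study form plus the Ricci of $D$ — this is the usual construction giving $T_f(r,L)+T_f(r,K_X)$ on the left side.

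The analytic heart is a \emph{calculus lemma} on $M$: for a positive increasing function $\Phi(r)=\int_{\Delta(r)}(\text{non-negative form})$, one has, outside a set $E_\delta$ of finite measure, an estimate of the shape $\frac{d}{dr}\big(\text{boundary integral}\big)\le \big(\text{something}\big)\cdot\Phi(r)^{1+\delta}$, obtained by applying the coarea formula along the level sets of the Green-function exhaustion and twice invoking the Borel-type growth lemma. Here is exactly where the function $\Xi(r,\delta,\kappa)$ enters: the ratio of heat-kernel integrals in $(\ref{XI})$ quantifies, via the two-sided Li--Yau/Grigor'yan--Saloff-Coste bounds available on connected sums with non-parabolic ends, how the harmonic measure and the metric distortion on $\partial\Delta(r)$ compare with the volume growth $V_{\min},V_{\max}$. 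The numerator carries the $(|\kappa|+r^{-1})$ factor precisely because the Bochner/Weitzenböck term for the relevant pluriharmonic-type function on $M$ picks up the lower Ricci bound $\kappa$ through property $(PH)$. I would then combine this with the pointwise Ahlfors--Schwarz inequality for the pulled-back singular metric: differentiably non-degeneracy of $f$ guarantees the relevant Jacobian is not identically zero, so $\log$ of the pulled-back volume form is subharmonic off $\mathrm{supp}(f^*D)$ with Laplacian bounded below by the characteristic-function integrand.

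Next I would integrate the Ahlfors--Schwarz inequality over $\Delta(r)$ against the Green kernel, using the First Main Theorem on $\Delta(r)$ (which holds because $\{\Delta(r)\}$ is built from the Green function, so the Jensen/Green formula is available) to convert $\int_{\Delta(r)}f^*c_1(L,h)$ into $T_f(r,L)+m_f(r,D)+O(1)$ and the curvature term into $T_f(r,K_X)+T(r,\mathscr R)$; the boundary/concentration term is controlled by the calculus lemma above and becomes $O\big(\log^+T_f(r,L)+\log^+\Xi(r,\delta,\kappa)\big)$. The proximity term $m_f(r,D)$ is non-negative, so dropping it and moving $\overline N_f(r,D)$ to the right — here one uses the truncated counting estimate $N_f(r,\mathrm{Ram})\ge \text{(ramification contribution)}$ together with the normal-crossing hypothesis to replace $N_f(r,D)$ by $\overline N_f(r,D)$ up to the curvature terms already on the left — yields the stated inequality for $r\notin E_\delta$.

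The main obstacle I anticipate is the calculus/error-term step: on $\mathbb C^m$ one has the clean concentric-ball exhaustion and the logarithmic-derivative estimates are classical, but on a connected sum the level sets $\partial\Delta(r)$ are genuinely irregular, the Green function has $\vartheta$ distinct "channels" with different volume growths $V_j$, and one must show the heat-kernel two-sided bounds survive gluing across the central part $K$ with only the lower Ricci bound $\kappa$ and property $(PH)$ available — this is exactly what forces the somewhat elaborate form of $\Xi(r,\delta,\kappa)$ and is where most of the technical work will lie. A secondary subtlety is checking that differentiable non-degeneracy of $f$ (rather than the stronger algebraic non-degeneracy) suffices to run the Ahlfors--Schwarz argument uniformly up to $\partial\Delta(r)$; I expect this is handled by the $(PH)$ property controlling the metric near the ends.
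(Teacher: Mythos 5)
Your proposal is essentially the paper's argument: a Carlson--Griffiths singular volume form $\Phi=\wedge^n c_1(L,h)/\prod_j\|s_j\|^2$ with logarithmic poles along the simple normal crossing divisor $D$, the current inequality for $dd^c[\log\xi]$ whose Green-weighted integral dominates $T_f(r,L)+T_f(r,K_X)+T(r,\mathscr R)-\overline N_f(r,D)$, the Jensen--Dynkin formula to pass to a boundary integral of $\log\xi$, a logarithmic derivative lemma, and a calculus lemma proved by the coarea formula on the Green-function exhaustion together with two applications of Borel's lemma; the function $\Xi(r,\delta,\kappa)$ enters exactly where you place it, through the Cheng--Yau gradient estimate for $G(o,\cdot)$ (whence the factor $|\kappa|+r^{-1}$) in the harmonic-measure bound $d\pi_r\le c\left(|\kappa|+r^{-1}\right)\bigl(\int_0^\infty e^{-r^2/(bt)}V_{\rm min}(\sqrt t)^{-1}dt\bigr)\,d\sigma_r$. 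One correction: drop your opening reduction to $X=\mathbb P^N$ with $L=\mathcal O(1)$ --- it is not needed, and as stated it would fail, since composing with a projective embedding makes $f$ degenerate as a map into $\mathbb P^N$ and discards the intrinsic term $T_f(r,K_X)$; the paper (and the remainder of your own argument) works directly on $X$. A smaller point: the truncation to $\overline N_f(r,D)$ does not require a separate ramification estimate --- it comes directly from the term $-[{\rm Red}(f^*D)]$ in the Poincar\'e--Lelong identity for $dd^c[\log\xi]$.
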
  
  
By estimating $\Xi(r,\delta,\kappa)$ using the volume comparison theorem,  we obtain: 

       \begin{theorem}\label{main2}  Let $M=M_1\#\cdots\#M_\vartheta$ be a  
     complete K\"ahler connected sum with $\vartheta$  non-parabolic  ends, 
   where  all  $M_j^{,}s$ are   non-compact manifolds  satisfying   $(PH)$  in Property $\ref{ap2},$ with Ricci curvature bounded from below by a constant. 
        Let $X$ be a complex projective manifold of complex dimension not greater than  that  of $M.$
 Let $D\in|L|$ be a reduced divisor of simple normal crossing type, where $L$ is a positive line bundle over $X.$   Let  $f:M\rightarrow X$ be a  differentiably non-degenerate meromorphic mapping.    Then  
        \begin{eqnarray*}
&&T_f(r,L)+T_f(r, K_X)+T(r, \mathscr R) \\
&\leq& \overline N_f(r,D)+O\left(\log^+ T_{f}(r,L)+r^2\right) 
         \end{eqnarray*}
    holds for all $r>0$ outside a subset  of finite Lebesgue measure.  
    \end{theorem}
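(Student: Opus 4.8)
The plan is to derive Theorem~\ref{main2} from Theorem~\ref{main1} by fixing some $\delta>0$ (say $\delta=1$) and proving the estimate
\[
\log^+\Xi(r,\delta,\kappa)=O(r^2)\qquad (r\to\infty),
\]
where $\Xi(r,\delta,\kappa)$ is given by (\ref{XI}). Granting this, substituting into the conclusion of Theorem~\ref{main1} (applied with this $\delta$) replaces $\log^+\Xi(r,\delta,\kappa)$ by $O(r^2)$ in the error term while keeping the same exceptional set $E_\delta$ of finite Lebesgue measure, which is precisely the assertion; the range $0<r<1$ is harmless and may in any case be absorbed into the exceptional set. Since $\Xi(r,\delta,\kappa)$ is a ratio, I will bound its numerator from above and its denominator from below, both bounds being consequences of the growth of $V_{\min}$ and $V_{\max}$, which are controlled by the volume comparison theorem together with the non-parabolicity of the ends.

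\textbf{Upper bound for the numerator.} First, $|\kappa|+r^{-1}=O(1)$ for $r\geq1$ since $\kappa$ is a fixed constant. For $\int_0^\infty e^{-r^2/(bt)}V_{\min}(\sqrt t)^{-1}\,dt$ I would split the $t$-range at $t=1$. Near $t=0$ the geodesic ball $B(\sqrt t)$ lies in the interior of $K$, so $V_{\min}(\sqrt t)=V(\sqrt t)$ is comparable to $t^{n/2}$ (with $n=\dim_{\mathbb R}M$); as $e^{-r^2/(bt)}$ decays faster than any power of $t$ as $t\to0^+$, this part of the integral is finite for each $r>0$ and non-increasing in $r$, hence $O(1)$. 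On the range $t\geq1$ one uses $e^{-r^2/(bt)}\leq1$ and the non-parabolicity of each end $E_j$ (Property~\ref{ap2}), which gives $\int_1^\infty V_j(\sqrt t)^{-1}\,dt<\infty$; since $V_{\min}(\sqrt t)^{-1}\leq\sum_{j}V_j(\sqrt t)^{-1}$, this part is $O(1)$ as well. Hence the numerator of $\Xi(r,\delta,\kappa)$ is $O(1)$.

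\textbf{Lower bound for the denominator.} Because $M$ is obtained by gluing the ends $E_j\subset M_j$ to a compact central part $K$, its Ricci curvature is bounded below by a constant, so the Bishop--Gromov volume comparison theorem yields $V(R)\leq C_1e^{C_2R}$ for all $R>0$ and suitable constants $C_1,C_2>0$. Restricting the integral $\int_0^\infty e^{-r^2/(at)}V_{\max}(\sqrt t)^{-1}t^{-1}\,dt$ to $t\in[r^2,4r^2]$, where $e^{-r^2/(at)}\geq e^{-1/a}$ and $V_{\max}(\sqrt t)\leq V(2r)\leq C_1e^{2C_2r}$, gives
\[
r\int_0^\infty\frac{e^{-r^2/(at)}}{V_{\max}(\sqrt t)}\,\frac{dt}{t}\ \geq\ r\,e^{-1/a}\,\frac{1}{C_1e^{2C_2r}}\int_{r^2}^{4r^2}\frac{dt}{t}\ =\ c_3\,r\,e^{-2C_2r}
\]
for some $c_3>0$. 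Combining the two estimates,
\[
\log^+\Xi(r,\delta,\kappa)\ \leq\ O(1)+(1+\delta)\bigl(2C_2r-\log(c_3r)\bigr)\ =\ O(r)\ =\ O(r^2)\qquad(r\geq1),
\]
and feeding this into Theorem~\ref{main1} with $\delta=1$ completes the proof.

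\textbf{Main obstacle.} There is no real conceptual difficulty here --- Theorem~\ref{main2} is a quantitative corollary of Theorem~\ref{main1} --- and the work is essentially careful estimation. The single step that uses a standing hypothesis in an essential (rather than soft) way is the convergence of $\int^\infty V_{\min}(\sqrt t)^{-1}\,dt$, which is exactly where the non-parabolicity of the ends is invoked; one should also take care to verify that the lower Ricci bound needed to apply Bishop--Gromov on all of $M$ follows from the lower Ricci bounds on the summands $M_j$ and the compactness of $K$. (The computation above actually gives the sharper bound $\log^+\Xi(r,\delta,\kappa)=O(r)$, and even $O(\log r)$ when the $M_j$ have polynomial volume growth; the weaker $O(r^2)$ is kept only for a tidy statement.)
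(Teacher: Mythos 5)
Your proposal is correct and follows the same overall reduction as the paper: bound the numerator of $\Xi(r,\delta,\kappa)$ above by a constant (using non-parabolicity of the ends, exactly as the paper does via Theorem \ref{xoxo}), bound the denominator below (using the lower Ricci bound on $M$ inherited from the $M_j$'s and the compactness of $K$, plus Bishop--Gromov), and feed $\log^+\Xi=O(r^2)$ into Theorem \ref{main1} with a fixed $\delta$. The one genuinely different step is the lower bound for $r\int_0^\infty e^{-r^2/(at)}V_{\max}(\sqrt t)^{-1}t^{-1}\,dt$: the paper restricts the integral to $t\in(0,1)$, where the volume factor is harmless but the Gaussian factor costs $e^{-r^2/a}$, which is precisely where its $O(r^2)$ error term comes from; you instead restrict to $t\in[r^2,4r^2]$, where the Gaussian factor is bounded below by $e^{-1/a}$ and the cost is shifted to the volume factor $V(2r)\leq C_1e^{2C_2r}$, yielding the sharper bound $\log^+\Xi=O(r)$. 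Your localization is the better one --- it shows the theorem actually holds with $O(r)$ in place of $O(r^2)$ under the stated hypotheses --- and weakening back to $O(r^2)$ to match the statement is of course legitimate. All the supporting details you flag (Ricci of $M$ bounded below, $V_{\max}(R)\leq V(R)$, monotonicity of $V$, absorbing $(0,1)$ into the exceptional set) check out.
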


  By Li-Yau's argument \cite{L-Y}, the non-negativeness of  Ricci curvature of all $M_j^,s$ implies the  two-sided heat kernel bounds  
$(HK)$ in Property \ref{ap2}.  Note that $(HK)$  is equivalent to $(PH)$  due to  Theorem  \ref{equi2} in Section 2.2. 
When all $M_j^,s$ have non-negative Ricci curvature, we show that 
  
    \begin{theorem}\label{main3}  Let   $M=M_1\#\cdots\#M_\vartheta$ be a  complete K\"ahler connected sum with $\vartheta$  non-parabolic  ends, where  all  $M_j^,s$ are non-compact   manifolds with non-negative Ricci curvature.    
Let $X$ be a complex projective manifold of complex dimension not greater than  that  of $M.$
 Let $D\in|L|$ be a reduced divisor of simple normal crossing type, where $L$ is a positive line bundle over $X.$  Let  $f:M\rightarrow X$ be a  differentiably non-degenerate meromorphic mapping.   Then  
      \begin{eqnarray*}
&&T_f(r,L)+T_f(r, K_X)+T(r, \mathscr R) \\
&\leq& \overline N_f(r,D)+O\left(\log^+ T_{f}(r,L)+\log r\right) 
         \end{eqnarray*}
    holds for all $r>0$ outside a subset  of finite Lebesgue measure.  
\end{theorem}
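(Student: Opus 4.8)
The plan is to derive Theorem~\ref{main3} from Theorem~\ref{main2} by showing that, under the hypothesis of non-negative Ricci curvature on each $M_j$, the error term $O(r^2)$ in Theorem~\ref{main2} can in fact be improved to $O(\log r)$. The key point is that non-negative Ricci curvature on each end gives, via the Bishop--Gromov volume comparison theorem, both-sided polynomial-type control on the volume functions $V_j(r)$, and hence on $V_{\rm min}(r)$ and $V_{\max}(r)$. More precisely, on each end $E_j$ one has $V_j(r)\simeq r^{2m_j}$ up to constants when the manifold has maximal volume growth, and in general $c\, r^{\nu_j}\le V_j(r)\le C\, r^{2m}$ for suitable exponents; the essential feature we need is that $\log V_{\rm min}(\sqrt t)$ and $\log V_{\max}(\sqrt t)$ are $O(\log t)$, and that the integrals appearing in $\Xi(r,\delta,\kappa)$ converge and admit clean asymptotic estimates.

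The main computation is then to estimate $\Xi(r,\delta,\kappa)$ from \eqref{XI} in this setting. Since each $M_j$ has non-negative Ricci curvature we may take $\kappa_j=0$, hence $\kappa=0$ by \eqref{min1}, so the prefactor $|\kappa|+r^{-1}$ in the numerator reduces to $r^{-1}$. For the numerator integral $\int_0^\infty e^{-r^2/(bt)}/V_{\rm min}(\sqrt t)\,dt$, the substitution $t=r^2 s$ together with the polynomial bounds on $V_{\rm min}$ shows this integral is comparable to $r^{-\text{(something)}}$ times a convergent constant-order integral, so the whole numerator is of the form $r^{-1}\cdot r^{2-\nu}\,(\text{const})$ for an appropriate exponent; similarly the denominator integral $r\int_0^\infty e^{-r^2/(at)} V_{\max}(\sqrt t)^{-1}\,dt/t$ is comparable, after the same rescaling, to a positive power of $r$ times a convergent constant. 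Taking logarithms, $\log^+\Xi(r,\delta,\kappa)=O(\log r)$ for every fixed $\delta>0$. Feeding this into the conclusion of Theorem~\ref{main1} (or equivalently substituting the improved estimate directly) yields the stated inequality with error term $O(\log^+ T_f(r,L)+\log r)$ outside a set of finite Lebesgue measure.

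I expect the main obstacle to be making the volume estimates both rigorous and uniform across the ends. Bishop--Gromov gives an upper bound $V_j(r)\le C r^{2m}$ cleanly, but the lower bound on $V_{\min}(r)$ — which controls the numerator integral and ensures the integral $\int_0^\infty e^{-r^2/(bt)}/V_{\rm min}(\sqrt t)\,dt$ actually converges near $t=0$ and near $t=\infty$ — requires a non-parabolicity/volume-growth input rather than pure Ricci non-negativity; this is exactly where the non-parabolic hypothesis on the ends and Property~\ref{ap2} enter. One must check that non-parabolicity of each end forces $\int^\infty t/V_j(t)\,dt<\infty$ (equivalently a strictly super-linear lower volume growth), so that the Gaussian-weighted integrals in $\Xi$ are finite and comparable to powers of $r$ with logarithmic error. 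Once these uniform two-sided bounds $r^{\nu}\lesssim V_{\min}(r),V_{\max}(r)\lesssim r^{2m}$ are in hand, the remaining steps are the routine rescaling computations sketched above, and the conclusion follows immediately from Theorem~\ref{main2} (or directly from Theorem~\ref{main1}).
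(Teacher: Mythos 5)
Your proposal follows essentially the same route as the paper: take $\kappa=0$, bound the two Gaussian-weighted integrals in $\Xi(r,\delta,\kappa)$ using Bishop--Gromov volume comparison (giving $V_j(r)\le Cr^{2m}$) together with non-parabolicity of the ends (giving $\int^\infty t\,dt/V_j(t)<\infty$ and hence $r^2=o(V_j(r))$), conclude $\log^+\Xi(r,\delta,0)=O(\log r)$, and substitute into Theorem~\ref{main1}; this is exactly what the paper does via its integral estimates in Theorems~\ref{est1} and~\ref{est2}. The one caveat is that non-parabolicity does not yield (and the argument does not need) a power lower bound $V_{\min}(r)\gtrsim r^{\nu}$ with $\nu>2$ — the weaker fact $r^{2}=o(V_{\min}(r))$ already suffices for the bound $\Xi(r,\delta,0)\le O\bigl(r^{(2m-1)(1+\delta)}\bigr)$.
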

  
  Grigor'yan-Saloff-Coste   introduced the concept  of \emph{homogeneousness}  for a connected sum  as follows  (see \cite{Gri}, p. 1984). 
  \begin{defi}\label{ah}
   We say   that $M$ is  homogeneous,  if   
 $$V_{i}(r)\simeq V(r)\simeq V_{j}(r)$$
holds  for  all $i,j=1,\cdots,\vartheta.$
   \end{defi}
   
      Set
 \begin{equation}\label{X}
 E(r)=\frac{V(r)}{r^{2}}\int_r^\infty\frac{tdt}{V(t)}.
  \end{equation}

  When  $M$ is homogeneous  and all $M_j^,s$ have non-negative Ricci curvature,  
       we   show that  

  \begin{theorem}\label{main4}  Let   $M=M_1\#\cdots\#M_\vartheta$ be a homogeneous complete K\"ahler connected sum with $\vartheta$  non-parabolic  ends, where  all  $M_j^,s$ are non-compact   manifolds with non-negative Ricci curvature.    
Let $X$ be a complex projective manifold of complex dimension not greater than  that  of $M.$
 Let $D\in|L|$ be a reduced divisor of simple normal crossing type, where $L$ is a positive line bundle over $X.$  Let  $f:M\rightarrow X$ be a  differentiably non-degenerate meromorphic mapping.    Then  for any  $\delta>0,$ there exists a subset $E_\delta\subset(0, \infty)$ of finite Lebesgue measure such that 
      \begin{eqnarray*}
&&T_f(r,L)+T_f(r, K_X)+T(r, \mathscr R) \\
&\leq& \overline N_f(r,D)+O\left(\log^+ T_{f}(r,L)+\log^+E(r)+\delta\log r\right) 
         \end{eqnarray*}
holds for  all $r>0$ outside $E_\delta,$ where $E(r)$ is defined by $(\ref{X}).$   
\end{theorem}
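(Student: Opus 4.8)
The plan is to deduce Theorem \ref{main4} from Theorem \ref{main1} by sharpening the estimate of the error term $\Xi(r,\delta,\kappa)$ under the extra hypotheses: namely that all $M_j^,s$ have \emph{non-negative} Ricci curvature (so $\kappa_j\geq 0$, hence $\kappa\geq 0$ and the factor $|\kappa|+r^{-1}$ reduces to $r^{-1}$), and that $M$ is homogeneous in the sense of Definition \ref{ah}. The key simplification is that homogeneousness allows us to replace, up to multiplicative constants, every occurrence of $V_{\rm min}$ and $V_{\max}$ in \eqref{XI} by the single global volume function $V$. So the first step is to insert $V_{\rm min}(\sqrt t)\simeq V(\sqrt t)$ and $V_{\max}(\sqrt t)\simeq V(\sqrt t)$ into the numerator and denominator of $\Xi(r,\delta,\kappa)$, which (absorbing constants into the $O(\cdot)$) turns $\log^+\Xi(r,\delta,\kappa)$ into
\begin{equation*}
\log^+\Xi(r,\delta,\kappa)=O\!\left(\log^+\!\left(r^{-1}\int_0^\infty\frac{e^{-r^2/(bt)}}{V(\sqrt t)}\,dt\right)-(1+\delta)\log^+\!\left(r\int_0^\infty\frac{e^{-r^2/(at)}}{V(\sqrt t)}\frac{dt}{t}\right)+O(1)\right).
\end{equation*}

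The second step is to evaluate the two time-integrals by the standard change of variable $s=\sqrt t$ (so $dt=2s\,ds$) together with the Li--Yau comparison bound for $V$ under non-negative Ricci curvature, which gives the monotonicity/doubling properties of $V$ needed to localize each integral around $s\sim r$. For the numerator integral $\int_0^\infty e^{-r^2/(bt)}V(\sqrt t)^{-1}dt$, the Gaussian factor $e^{-r^2/(bt)}$ is exponentially small for $t\lesssim r^2$ and $\simeq 1$ for $t\gtrsim r^2$, so that integral is comparable to $\int_{r^2}^\infty V(\sqrt t)^{-1}dt=2\int_r^\infty sV(s)^{-1}ds$ (the convergence of which is exactly the non-parabolicity of the ends); after multiplying by $r^{-1}$ this matches $r^{-1}\cdot r^2\cdot V(r)^{-1}\cdot E(r)\cdot(\text{const})$ up to the rewriting $E(r)=\frac{V(r)}{r^2}\int_r^\infty\frac{t\,dt}{V(t)}$ in \eqref{X}. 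For the denominator integral $\int_0^\infty e^{-r^2/(at)}V(\sqrt t)^{-1}t^{-1}dt$, the same localization plus the doubling of $V$ shows it is comparable to $V(r)^{-1}$ up to a constant, so that $r\int_0^\infty e^{-r^2/(at)}V(\sqrt t)^{-1}t^{-1}dt\simeq r/V(r)$. Feeding these back in, $\log^+\Xi(r,\delta,\kappa)=O\big(\log^+ E(r)+\delta\log r\big)+O(1)$: the ratio of the ``$r\cdot E(r)/V(r)$''-type numerator to the ``$(r/V(r))^{1+\delta}$''-type denominator produces exactly a factor $E(r)$ times a power $r^{\delta}$ (the $V(r)^{\delta}$ and the extra $r$ factors cancelling against each other up to constants), whence the $\delta\log r$ term.

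The third and final step is to substitute this estimate of $\log^+\Xi(r,\delta,\kappa)$ into the conclusion of Theorem \ref{main1}, which immediately yields
\begin{equation*}
T_f(r,L)+T_f(r,K_X)+T(r,\mathscr R)\leq\overline N_f(r,D)+O\!\left(\log^+T_f(r,L)+\log^+E(r)+\delta\log r\right)
\end{equation*}
for all $r>0$ outside the exceptional set $E_\delta$ of finite Lebesgue measure provided by Theorem \ref{main1}; the additive $O(1)$ is harmless. The main obstacle I expect is the careful Laplace-type asymptotic analysis in the second step: one must justify, rigorously and uniformly in $r$, that both heat-type integrals concentrate at the scale $t\simeq r^2$ and evaluate there up to constants depending only on the doubling constant of $V$ — this is where the volume comparison theorem for non-negative Ricci curvature and the non-parabolicity assumption (finiteness of $\int^\infty sV(s)^{-1}ds$) are both essential, and where homogeneousness is needed to pass from the per-end volumes $V_{\rm min},V_{\max}$ to the single function $V$. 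Everything else is bookkeeping of constants inside the $O(\cdot)$.
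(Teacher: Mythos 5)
Your proposal is correct and follows essentially the same route as the paper: use homogeneousness to replace $V_{\min},V_{\max}$ by $V$, bound the numerator integral above by $C r^2/V(r)+2\int_r^\infty t\,dt/V(t)$ and the denominator integral below by $c/V(r)$ via Bishop--Gromov volume comparison (the paper packages these as Theorems \ref{est1} and \ref{est2}), and then read off $\log^+\Xi\leq\log^+E(r)+O(\delta\log r)+O(1)$ before invoking Theorem \ref{main1}. The only cosmetic difference is that you assert two-sided comparability of the heat-type integrals where only one-sided bounds are needed, and you should make explicit that the final $\delta\log r$ term uses $V(r)=O(r^{2m})$ from volume comparison.
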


 We consider some defect relations. The    \emph{simple defect}  $\bar\delta_f(D)$ and  notation $[c_1(K_X^*)/c_1(L)]$ (see Carlson-Griffiths \cite{gri})
  are   defined respectively  by
 \begin{eqnarray*}
 \bar\delta_f(D)&=&1-\limsup_{r\rightarrow\infty}\frac{\overline{N}_f(r,D)}{T_f(r,L)}
 \end{eqnarray*}
 and
$$\left[\frac{c_1(K_X^*)}{c_1(L)}\right]=\inf\left\{s\in\mathbb R: \ \eta\leq s\omega;  \  \  ^\exists\omega\in c_1(L),\  ^\exists\eta\in c_1(K^*_X) \right\}.$$
 
As some applications  of  Theorems \ref{main1}, \ref{main2} and \ref{main3},  we obtain the following  defect relations in Nevanlinna theory. 

\begin{cor}  Assume the same conditions as in Theorem  $\ref{main2}.$ If 
$$\lim_{r\to\infty}\frac{r^2}{T_f(r, L)}=0,$$
then 
$$\bar\delta_f(D)
\leq \left[\frac{c_1(K_X^*)}{c_1(L)}\right].$$ 
\end{cor}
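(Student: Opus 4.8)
The plan is to read the defect relation off directly from the second main theorem of Theorem~\ref{main2}: divide that inequality by $T_f(r,L)$, use the hypothesis $r^2/T_f(r,L)\to 0$ to annihilate every error term as well as the domain-curvature term $T(r,\mathscr R)$, and translate the canonical-bundle term $T_f(r,K_X)$ into the invariant $[c_1(K_X^*)/c_1(L)]$. This is the standard Carlson--Griffiths passage from a second main theorem to a defect inequality, so the only work is book-keeping in the present Green-function normalization.

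First I would fix an arbitrary $s>[c_1(K_X^*)/c_1(L)]$. By the very definition of that invariant there are a Hermitian metric on $L$ with Chern form $\omega\in c_1(L)$ and a Hermitian metric on $K_X^*$ with Chern form $\eta\in c_1(K_X^*)$ with $\eta\le s\,\omega$ on $X$. Computing the characteristic functions with respect to these metrics and using $f^*\eta\le s\,f^*\omega$ together with the additivity of $T_f(r,\cdot)$ in the line bundle argument, one obtains
\[
T_f(r,K_X^*)\le s\,T_f(r,L)+O(1),\qquad\text{equivalently}\qquad T_f(r,K_X)\ge -s\,T_f(r,L)-O(1).
\]
Next, since $M$ has Ricci curvature bounded below by $\kappa$ of \eqref{min1}, its Ricci form satisfies $\mathscr R\ge \kappa\,\omega_M$ where $\omega_M$ is the Kähler form of $M$; from the construction of the exhausting domains $\Delta(r)$ one checks $T(r,\omega_M)=O(r^2)$ (the same estimate that produces the $r^2$ in Theorem~\ref{main2}), hence $T(r,\mathscr R)\ge -C r^2$ for a constant $C>0$.

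Now I plug these two bounds into Theorem~\ref{main2}. The hypothesis forces $T_f(r,L)\to\infty$, so $\log^+T_f(r,L)=o(T_f(r,L))$ and $r^2=o(T_f(r,L))$; dividing by $T_f(r,L)$ gives
\[
\frac{\overline N_f(r,D)}{T_f(r,L)}\ \ge\ 1+\frac{T_f(r,K_X)}{T_f(r,L)}+\frac{T(r,\mathscr R)}{T_f(r,L)}-o(1)\ \ge\ 1-s-o(1)
\]
for all $r>0$ outside a set $E$ of finite Lebesgue measure. Choosing a sequence $r_n\to\infty$ with $r_n\notin E$ (possible since $E$ has finite measure) and letting $n\to\infty$ yields $\limsup_{r\to\infty}\overline N_f(r,D)/T_f(r,L)\ge 1-s$, that is $\bar\delta_f(D)\le s$. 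Letting $s\downarrow[c_1(K_X^*)/c_1(L)]$ completes the argument.

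The main obstacle, and essentially the only non-formal point, is to justify that $T(r,\mathscr R)\ge -O(r^2)$ (so that the hypothesis truly absorbs the domain's curvature contribution) and that the comparison $T_f(r,K_X^*)\le s\,T_f(r,L)+O(1)$ holds with an error that is genuinely $o(T_f(r,L))$ in the Green-function definition of the characteristic functions used in Section~3.3; once these two facts are verified the remainder is a formal limsup computation.
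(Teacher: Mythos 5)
The paper states this corollary without proof, treating it as an immediate consequence of Theorem \ref{main2}, and your overall route is exactly the intended one: bound $T_f(r,K_X)=-T_f(r,K_X^*)+O(1)\ge -s\,T_f(r,L)-O(1)$ for any $s>[c_1(K_X^*)/c_1(L)]$ using the metric-independence of characteristic functions, divide the second main theorem by $T_f(r,L)$, pass to a sequence $r_n\to\infty$ outside the exceptional set, and let $s$ decrease to the infimum. All of that is correct and standard.

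The genuine gap is the one you yourself flag: the lower bound $T(r,\mathscr R)\ge -O(r^2)$. This bound is really needed (without some control of $T(r,\mathscr R)$ from below the corollary does not follow formally from Theorem \ref{main2}, since the argument requires $\liminf T(r,\mathscr R)/T_f(r,L)\ge 0$), you do not prove it, and the justification you sketch is wrong: the $r^2$ in Theorem \ref{main2} does \emph{not} come from an estimate of $T(r,\omega_M)$ or of any characteristic function of the domain; it comes from the bound $\log^+\Xi(r,\delta,\kappa)\le O(r^2)$ on the calculus-lemma factor $\Xi$ defined in (\ref{XI}), which is a completely different quantity. A correct argument would reduce, via the pointwise inequality $\mathscr R\wedge\alpha^{m-1}\ge c\,\kappa\,\alpha^{m}$ coming from the lower Ricci bound, to showing $\int_{\Delta(r)}g_r(o,x)\,dv=O(r^2)$, i.e.\ that the expected exit time $\mathbb E_o[\tau_r]$ of Brownian motion from $\Delta(r)$ grows at most quadratically. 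In the Green-function exhaustion used here this is not automatic: by the computation in the proof of Theorem \ref{cal} with $k\equiv 1$ one gets
\begin{equation*}
\int_{\Delta(r)}g_r(o,x)\,dv=\frac{4A}{a}\int_0^r t\left(\int_0^\infty\frac{e^{-\frac{t^2}{as}}}{V_{\max}(\sqrt{s})}\frac{ds}{s}\right){\rm{Vol}}(\Delta(t))\,dt,
\end{equation*}
so one must control ${\rm{Vol}}(\Delta(t))$ against $V_{\max}(t)$ (the domains $\Delta(t)$ need not be comparable to geodesic balls of radius $t$ when the ends have very different volume growth). Until that estimate is supplied, the corollary is not proved; the rest of your write-up is, as you say, formal book-keeping and is fine.
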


\begin{cor}  Assume the same conditions as in Theorem  $\ref{main3}.$ If 
$$\lim_{r\to\infty}\frac{\log r}{T_f(r, L)}=0,$$
then 
$$\bar\delta_f(D)
\leq \left[\frac{c_1(K_X^*)}{c_1(L)}\right].$$ 
\end{cor}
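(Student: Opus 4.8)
The plan is to obtain the defect inequality from the second main theorem in Theorem~\ref{main3} by the standard route: rearrange that inequality, divide through by $T_f(r,L)$, and take $\limsup$, once the two ``extra'' left-hand terms $T_f(r,K_X)$ and $T(r,\mathscr R)$ have been disposed of.

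First I would fix an arbitrary $s>[c_1(K_X^*)/c_1(L)]$. By the definition of this bracket there are a K\"ahler form $\alpha\in c_1(L)$ (one may take $\alpha$ positive since $L$ is positive) and a representative $\beta\in c_1(K_X^*)$ with $\beta\le s\alpha$. Then $s\,f^*\alpha-f^*\beta\ge 0$ on $M$, and inserting this into the definition of the characteristic function --- together with the elementary facts, recorded in Section~3, that changing the representative of a Chern class alters $T_f(r,\cdot)$ by only $O(1)$ and that $c_1(K_X)=-c_1(K_X^*)$ --- yields the comparison
$$T_f(r,K_X)\ \ge\ -\,s\,T_f(r,L)-O(1).$$
Next I would bound $T(r,\mathscr R)$ from below. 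Each end $E_j$ is isometric to the complement of a compact set in $M_j$, which has non-negative Ricci curvature, so $\mathscr R\ge 0$ on $M\setminus K$; on the compact central part $K$ the smooth form $\mathscr R$ is bounded, hence bounded below there by a constant multiple of the K\"ahler form of $M$. Because $K$ is compact, its contribution to the integrals defining $T(r,\mathscr R)$ is $O(\log r)$ (indeed $O(1)$), so $T(r,\mathscr R)\ge-O(\log r)$, which is harmless since an $O(\log r)$ error already appears in Theorem~\ref{main3}. Substituting the two estimates into Theorem~\ref{main3} gives, for all $r$ outside a set $E$ of finite Lebesgue measure,
$$(1-s)\,T_f(r,L)\ \le\ \overline N_f(r,D)+O\bigl(\log^+T_f(r,L)+\log r\bigr).$$
Finally, the hypothesis $\lim_{r\to\infty}\log r/T_f(r,L)=0$ forces $T_f(r,L)\to\infty$ (otherwise the ratio would not tend to $0$), hence also $\log^+T_f(r,L)=o(T_f(r,L))$. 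Dividing the last display by $T_f(r,L)$ and letting $r\to\infty$ along $(0,\infty)\setminus E$, the error term tends to $0$, so $\limsup_{r\to\infty}\overline N_f(r,D)/T_f(r,L)\ge 1-s$, i.e.\ $\bar\delta_f(D)\le s$; letting $s$ decrease to $[c_1(K_X^*)/c_1(L)]$ finishes the proof. Everything here is routine bookkeeping except the lower bound for $T(r,\mathscr R)$: this is the only place where the non-negativity of the Ricci curvature of the $M_j$'s (and the compactness of $K$) is really used, and it is the step I would check most carefully against the precise definition of $T(r,\mathscr R)$ in Section~3.3.
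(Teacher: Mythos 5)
Your derivation is correct and is exactly the standard route one would take; the paper itself states this corollary without proof, as an immediate application of Theorem \ref{main3}. The two reductions you single out — $T_f(r,K_X)\ge -s\,T_f(r,L)-O(1)$ for any $s>[c_1(K_X^*)/c_1(L)]$ via a change of representatives (an $O(1)$ effect by the Jensen--Dynkin formula on the compact target $X$), and $T(r,\mathscr R)\ge -O(1)$ because $\mathscr R\ge 0$ off the compact central part $K$ while $\int_K g_r(o,x)\,\alpha^m$ is bounded by the integrability of the Green function near its pole — are both sound, and the concluding limsup argument along the complement of the exceptional set is handled correctly.
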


\begin{cor}\label{corm}  Assume the same conditions as in Theorem  $\ref{main4}.$ If
\begin{equation}\label{jy}
\lim_{r\to\infty}\frac{\log^+E(r)}{\log r}=0,
\end{equation}
then 
$$\bar\delta_f(D)
\leq \left[\frac{c_1(K_X^*)}{c_1(L)}\right].$$ 
\end{cor}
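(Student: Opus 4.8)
The plan is to run the usual calibration that turns a second main theorem into a defect relation, feeding it Theorem~\ref{main4} and exploiting that the error term there carries a \emph{free} parameter $\delta$. Write $\mathfrak s:=\left[c_1(K_X^*)/c_1(L)\right]$. It suffices to show $\bar\delta_f(D)\le s$ for every $s>\mathfrak s$ and then let $s\downarrow\mathfrak s$. Since $f$ is differentiably non-degenerate, $T_f(r,L)\to\infty$; as $\overline N_f(r,D)\ge 0$ this forces $\limsup_{r\to\infty}\overline N_f(r,D)/T_f(r,L)\ge 0$, hence $\bar\delta_f(D)\le 1$, which already settles the case $\mathfrak s\ge 1$. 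So assume $\mathfrak s<1$ and fix $s$ with $\mathfrak s<s<1$. By definition of $\mathfrak s$ there exist $\omega\in c_1(L)$ and $\eta\in c_1(K_X^*)$ with $\eta\le s\omega$; pulling back by $f$ and comparing the associated Nevanlinna characteristics (the First Main Theorem / functoriality of $T_f$ under the Green-function construction of the $\Delta(r)$) gives $T_f(r,K_X^*)\le sT_f(r,L)+O(1)$, i.e.
\[
T_f(r,K_X)\ \ge\ -s\,T_f(r,L)+O(1).
\]
Moreover each $M_j$ has non-negative Ricci curvature, so $\mathscr R\ge 0$ on every end $E_j$ while its negative part is confined to the compact central part $K$; hence $T(r,\mathscr R)\ge -C$ for some constant $C\ge 0$.

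Substituting these two lower bounds into Theorem~\ref{main4}: for each $\delta>0$ there is $E_\delta\subset(0,\infty)$ of finite measure so that, for all $r\notin E_\delta$,
\[
(1-s)\,T_f(r,L)\ \le\ \overline N_f(r,D)+O\!\left(\log^+T_f(r,L)+\log^+E(r)+\delta\log r\right).
\]
Now I absorb the error into $T_f(r,L)$ using three inputs: (i) $T_f(r,L)\to\infty$, so $\log^+T_f(r,L)=o\big(T_f(r,L)\big)$; (ii) the hypothesis $(\ref{jy})$, which is exactly $\log^+E(r)=o(\log r)$; and (iii) the lower growth bound $T_f(r,L)\gtrsim\log r$, valid for any differentiably non-degenerate $f$ (the analogue of the classical fact that the characteristic function of a non-constant meromorphic map grows at least like $\log r$). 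From (iii), $\log r\le c^{-1}T_f(r,L)$ for large $r$ with some $c>0$; together with (ii) this yields $\log^+E(r)=o\big(T_f(r,L)\big)$, and it bounds $\delta\log r\le(\delta/c)\,T_f(r,L)$. Hence, for all large $r\notin E_\delta$,
\[
\Big(1-s-\tfrac{C'\delta}{c}-o(1)\Big)T_f(r,L)\ \le\ \overline N_f(r,D),
\]
where $C'$ is the (absolute) implied constant of the $O(\cdot)$ in Theorem~\ref{main4}.

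Finally, dividing by $T_f(r,L)>0$ and taking $\limsup_{r\to\infty}$ over $r\notin E_\delta$ — which can only decrease the limit, so that $\limsup_{r\to\infty,\,r\notin E_\delta}\overline N_f(r,D)/T_f(r,L)\le\limsup_{r\to\infty}\overline N_f(r,D)/T_f(r,L)=1-\bar\delta_f(D)$ — we get $1-s-C'\delta/c\le 1-\bar\delta_f(D)$, i.e. $\bar\delta_f(D)\le s+C'\delta/c$. Letting $\delta\downarrow 0$ gives $\bar\delta_f(D)\le s$, and letting $s\downarrow\mathfrak s$ gives $\bar\delta_f(D)\le\left[c_1(K_X^*)/c_1(L)\right]$. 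The one genuinely non-formal ingredient is (iii): it is precisely because Theorem~\ref{main4} outputs the error in the shape $\delta\log r$ with $\delta$ arbitrary that the weak hypothesis $(\ref{jy})$ suffices — one needs only $\limsup_{r\to\infty}\log r/T_f(r,L)<\infty$ rather than the stronger $\log r=o\big(T_f(r,L)\big)$ imposed in the corollary attached to Theorem~\ref{main3}. I expect the main point to verify is therefore the availability of the growth bound $T_f(r,L)\gtrsim\log r$ in the present non-parabolic setting; granting it, the remaining steps are the standard passage from a second main theorem to a defect relation together with routine handling of the finite-measure exceptional set.
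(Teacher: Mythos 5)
Your overall strategy is the standard passage from a second main theorem to a defect relation, and it is the natural route here; the paper in fact states Corollary \ref{corm} without any proof, so there is nothing to compare against beyond the analogous corollaries for Theorems \ref{main2} and \ref{main3}. Your reductions are fine where they are formal: $T_f(r,K_X)\geq -s\,T_f(r,L)+O(1)$ for any $s$ above the infimum defining $[c_1(K_X^*)/c_1(L)]$, the bound $T(r,\mathscr R)\geq -C$ (the Ricci form is non-negative on the ends and the Green function is integrable over the compact central part), and the handling of the exceptional set followed by the two limits $\delta\downarrow 0$ and $s$ decreasing to the infimum.

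The genuine gap is exactly the ingredient you label (iii), the lower bound $T_f(r,L)\gtrsim\log r$. This is not an "analogue of a classical fact" that can simply be granted: it is proved nowhere in the paper and does not follow from the estimates the paper establishes. On $\mathbb C^m$ it follows from Lelong's monotonicity formula, i.e., from the fact that $t^{2-2m}\int_{\mathbb B(t)}f^*c_1(L,h)\wedge\alpha^{m-1}$ is non-decreasing, which uses the explicit plurisubharmonicity of $\log\|z\|$. On the connected sums considered here, the only available lower bound comes from Theorem \ref{f5} together with Theorem \ref{est1}: writing $n(t)=\int_{\Delta(t)}f^*c_1(L,h)\wedge\alpha^{m-1}\geq n(t_0)>0$, one gets
\begin{equation*}
T_f(r,L)\ \gtrsim\ n(t_0)\int_{t_0}^{r}\frac{t\,dt}{V_{\max}(t)},
\end{equation*}
and this integral \emph{converges} as $r\to\infty$ precisely because the ends are non-parabolic. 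So without a monotonicity (or Crofton-type) statement for $n(t)$ on $M$ — which would itself have to be proved — one cannot even exclude that $T_f(r,L)$ remains bounded, in which case the $\delta\log r$ term of Theorem \ref{main4} swamps the inequality and no defect relation follows. Note that the corollaries attached to Theorems \ref{main2} and \ref{main3} each carry an explicit hypothesis comparing the error term with $T_f(r,L)$, whereas hypothesis (\ref{jy}) controls $\log^+E(r)$ only against $\log r$, not against $T_f(r,L)$. Your argument therefore closes only if you either supply a proof of $T_f(r,L)\geq c\log r$ in this setting or add the assumption $\limsup_{r\to\infty}\log r/T_f(r,L)<\infty$; as written, the proof is incomplete at this one point, and everything else is correct.
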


We conclude   from Corollary \ref{corm} that

\begin{cor}\label{corh}    Picard's little theorem  holds   for  $\mathscr M(M)$ if $(\ref{jy})$ is satisfied.   In particular,  the Liouville's property is rigid  for $\mathscr H(M)$  if $(\ref{jy})$ is satisfied.
\end{cor}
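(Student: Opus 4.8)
\medskip

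The plan is to read off both assertions from the defect relation in Corollary~\ref{corm}, specialized to the target $X=\mathbb P^1$. For Picard's little theorem, let $f\in\mathscr M(M)$ and regard it as a meromorphic mapping $f:M\to\mathbb P^1$; I would argue by contradiction, assuming $f$ is non-constant while omitting three distinct values $a_1,a_2,a_3\in\mathbb P^1$. First I would fix the positive line bundle $L=\mathcal O_{\mathbb P^1}(3)$ and put $D=a_1+a_2+a_3\in|L|$. On the curve $\mathbb P^1$ this reduced divisor is automatically of simple normal crossing type, and $\dim_{\mathbb C}\mathbb P^1=1\le\dim_{\mathbb C}M$, so the hypotheses on the target required by Theorem~\ref{main4} (hence by Corollary~\ref{corm}) are satisfied, the remaining hypotheses being those on $M$ together with the standing assumption $(\ref{jy})$. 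Since $K_{\mathbb P^1}=\mathcal O(-2)$ we have $c_1(K_X^*)=2\,c_1(\mathcal O(1))$ and $c_1(L)=3\,c_1(\mathcal O(1))$, so that
\[
\left[\frac{c_1(K_X^*)}{c_1(L)}\right]=\frac{2}{3}.
\]

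Next I would note that a non-constant meromorphic mapping into $\mathbb P^1$ is differentiably non-degenerate: off the indeterminacy locus $f$ is holomorphic, and if $df\equiv 0$ then $f$ is locally constant, hence constant by connectedness of $M$; thus $df$ attains rank $1=\dim_{\mathbb C}\mathbb P^1$ on a dense open set. Consequently Corollary~\ref{corm} applies to $f$ and gives $\bar\delta_f(D)\le 2/3$. On the other hand, since $f$ omits $a_1,a_2,a_3$, the pullback $f^{*}D$ is the zero divisor, so $\overline N_f(r,D)\equiv 0$; as $f$ is non-constant its characteristic function $T_f(r,L)$ is monotone and eventually positive, whence
\[
\bar\delta_f(D)=1-\limsup_{r\to\infty}\frac{\overline N_f(r,D)}{T_f(r,L)}=1.
\]
This contradicts $\bar\delta_f(D)\le 2/3$, so $f$ must be constant, which proves the first assertion.

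For the rigidity of Liouville's property, let $h$ be a bounded harmonic function on $M$ that in addition satisfies the Cauchy-Riemann equation, i.e.\ $h\in\mathscr O(M)$ with $|h|\le R$ for some $R>0$. Regarded as a meromorphic mapping $M\to\mathbb P^1$, $h$ omits $\infty$ together with every value of modulus $\ge R$, hence omits at least three distinct values; by the Picard statement just established, $h$ is constant. In other words, imposing the Cauchy-Riemann equation restores the rigidity of Liouville's property within $\mathscr H(M)$.

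I do not anticipate any serious obstacle here, since the analytic content is carried entirely by Theorem~\ref{main4} and Corollary~\ref{corm}; the only points that need care are the elementary facts used above --- that a non-constant meromorphic mapping into $\mathbb P^1$ is differentiably non-degenerate and has an eventually positive (indeed unbounded) characteristic function, so that $\bar\delta_f(D)$ is well defined and equals $1$ when $f$ omits $D$ --- and the bookkeeping choice of $L=\mathcal O(3)$ and a reduced divisor $D$ of degree three, which makes the comparison constant $[c_1(K_X^*)/c_1(L)]$ equal to $2/3<1$ and thereby yields the contradiction.
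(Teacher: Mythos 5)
Your proposal is correct and is essentially the argument the paper intends: the paper offers no explicit proof beyond "we conclude from Corollary \ref{corm}," and your specialization to $X=\mathbb P^1$, $L=\mathcal O(3)$, $D=a_1+a_2+a_3$ with $[c_1(K_X^*)/c_1(L)]=2/3<1=\bar\delta_f(D)$ is exactly the standard deduction, as is reducing Liouville to Picard via boundedness of a holomorphic (harmonic plus Cauchy--Riemann) function. Your reading of the second assertion as concerning $\mathscr O(M)\subset\mathscr H(M)$ is also the one consistent with the paper's abstract and introduction.
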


In particular, we are interested in the case when $M=\#^\vartheta\mathbb C^m.$ It is known  that  $\mathbb C^m$ is non-parabolic for $m\geq2.$
By $V(r)=O(r^{2m})$ as $r\to\infty,$ we obtain  
$$\lim_{r\to\infty}\frac{\log^+E(r)}{\log r}=0.$$
  
   \section{Preliminaries}

For the reader's convenience,  we   outline  some facts from  geometric analysis including connected sums of Riemannian manifolds and their heat kernel estimate (see \cite{Gri6, Gri, L-Y, Sa, S-Y}).  Throughout the paper, unless otherwise specified, a  manifold always   means  boundless.  

 \subsection{Weighted Manifolds}~

Let $(M, g)$ be a Riemannian manifold with boundary $\delta M$ (which may be empty).  
 Fix a  positive smooth  function $\sigma$ on $M.$ We  define a measure $\mu$ on $M$ 
by  
$$d\mu=\sigma^2dv,$$
where $dv$ is the Riemannian measure (i.e., the Riemannian volume element)  on $M.$   The pair $(M, \mu)=(M, g, \mu)$  is called  a \emph{weighted manifold.}     
 Let  $B(x, r)$ denote   the geodesic ball centered at $x$ with radius $r$ in $M.$ Set 
 $$V(x, r)=\mu(B(x, r)), $$
 which is called the \emph{$\mu$-volume} of $B(x, r).$ Evidently, $V(x,r)$ is the Riemannian volume of $B(x, r)$ when 
 $\sigma\equiv1.$
 We say  that $M$ is complete if  the metric  space $(M, d)$ is complete,  here   $d$ is the Riemannian distance on $M$ induced by the metric $g.$ 
  When $\delta M=\emptyset,$  
  a well-known  fact states    that $M$ is complete if and only if $M$ is geodesically complete (i.e., the maximal 
defining interval of any geodesic  in $M$ is $\mathbb R$).

  For any smooth function $u$  on $M,$ the \emph{gradient} $\nabla u$   is a unique vector field defined by $g(\nabla u, X)=du(X)$ for each smooth vector field $X$ on $M.$ Locally,  $\nabla u$   can  be  explicitly determined  by 
 $$(\nabla u)^j=\sum_{i} g^{ij}\frac{\partial u}{\partial x_i}$$
 in  local coordinates  $x_1,\cdots, x_m,$ 
  where $(g^{ij})$ is  the inverse  of metric matrix  $g=(g_{ij}).$ It  gives     
  $$\|\nabla u\|^2=g\left(\nabla u, \nabla u\right)=\sum_{i,j}g^{ij}\frac{\partial u}{\partial x_i} \frac{\partial u}{\partial x_j}.$$
  The \emph{divergence} of a smooth vector field $X$  is defined by  
  $${\rm div}X=\frac{1}{\sqrt{G}}\sum_{j}\frac{\partial}{\partial x_j}\left(\sqrt{G}X^j\right),$$
 where $G=\det(g_{ij}).$ Whence, we have  the \emph{Laplace-Beltrami operator} 
  $$\Delta:={\rm{div}}\circ\nabla=\frac{1}{\sqrt{G}}\sum_{i,j}\frac{\partial u}{\partial x_i}\left(\sqrt{G}g^{ij}\frac{\partial}{\partial x_j}\right).$$
 With a weight $\mu,$     we also define the \emph{weighted divergence}   of $X$ by  
 $${\rm{div}}_\mu X=\frac{1}{\sigma^2\sqrt{G}}\sum_{j}\frac{\partial}{\partial x_j}\left(\sigma^2\sqrt{G}X^j\right).$$
 Note that  ${\rm{div}}_\mu={\rm{div}}$ if $\sigma\equiv1.$  
  It   induces  the \emph{weighted Laplace operator}      
  $$L:={\rm{div}_\mu}\circ\nabla=\sigma^{-2}{\rm{div}}\circ(\sigma^2\nabla).$$
Clearly,  we have   $L=\Delta$  if $\sigma\equiv1.$
  
  Consider the Dirichlet form 
  $$\mathscr D(u, v)=\int_Mg\left(\nabla u, \nabla v\right)d\mu$$
  for  all $u,v\in\mathscr C^\infty_c(M)$ (the space of  smooth functions with  compact support  on $M$). Using the integration-by-parts formula, we have 
  $$-\mathscr D(u,v)=\int_Mu Lvd\mu+\int_{\delta M}u\frac{\partial u}{\partial{\vec{\nu}}}d\mu',$$
  where $\partial/\partial{\vec{\nu}}$ is the inward normal derivative on $\delta M,$ and $\mu'$ is the measure with density $\sigma^2$ with respect to the Riemannian measure of codimension 1 on any hypersurface of $M,$  in particular,  on $\delta M.$ Note that $ L$ is symmetric on the subspace of $\mathscr C^\infty_c(M)$ of functions with vanishing derivative on $\delta M.$ It yields  that $ L$ initially defined on this subspace admits
   a Friedrichs extension 
   $\overline{ L}$ that is a self-adjoint and non-positive definite operator on $L^2(M,\mu).$
But,   $\Delta$ is indeed a self-adjoint and semi-positive definite operator. 
The associated heat semigroup $P_t=e^{t\overline{ L}}$    has  the smooth integral kernel, say $p(t,x,y),$ 
   which  is called the \emph{heat kernel} of $(M, \mu)$ associated to $\overline{ L}.$ 
    Alternatively, $p(t, x,y)$ is  
     the minimal positive fundamental solution of the Cauchy problem
      $$\begin{cases}
\left( L-\frac{\partial}{\partial t}\right)u(t,x)=0,  \ \ &   (t, x)\in (0,\infty)\times M; \\
u(0, x)=\delta_y(x), \ \  &  x, y\in M; \\
\frac{\partial u(t,x)}{\partial{\vec{\nu}}}\big{|}_{\delta M}=0, \ \ &  0<t<\infty,
\end{cases}$$
 where $\delta_x$ is the Dirac's delta  function on $M$ (see \cite{Ch, Do, Ro}). Note that  
 $$p(t, x,y)=p(t, y, x).$$
  The heat kernel $p(t, x,y)$ carries   a probabilistic meaning. Let $X_t:=(X_t)_{t\geq0}$ be a heat  diffusion process which is generated by
  $\overline{ L}$ on $M.$  Let $\mathbb P_x$ be the law  of $X_t$ starting from $x\in M.$ 
 Then,  $p(t,x,y)$ represents    the transition density  of $X_t,$ that is, 
  $$\mathbb P_x(X_t\in A)=\int_A p(t,x,y)d\mu(y)$$
  for any Borel set $A\subset M.$ 
  It is noted   that the Neumann boundary condition implies  that  $X_t$ is reflected on $\delta M.$
  We say that $(M, \mu)$ is \emph{non-parabolic} if
  $$\int_1^\infty p(t,x,y)dt<\infty$$
  for some (and hence for all) $x, y\in M,$ and \emph{parabolic} otherwise. A well-known fact claims  that the parabolicity of $M$ is equivalent to the recurrence of $X_t$ (see  \cite{Gri1}).

   \subsection{Faber-Krahn  Inequality and  Harnack Inequality}~

   \subsubsection{Faber-Krahn Inequality}~

  Let $(M, \mu)$ be a complete non-compact weighted manifold  with boundary $\delta M$ (which may be  empty).   
Moreover, we denote by  $d(x,y)$   the Riemannian distance between  $x,y\in M.$ 
  For a domain  $\Omega\subset M,$   define   
     $$\lambda_1(\Omega)=\inf_{\phi\in\mathscr C^\infty_c(\Omega)}\frac{\displaystyle\int_{\Omega}\|\nabla\phi\|^2d\mu}{\displaystyle\int_{\Omega}\phi^2d\mu}.$$ 
     Then,  $\lambda_1(\Omega)$   
     is   the first eigenvalue of $ L$ in $\Omega$ satisfying Dirichelet boundary condition on $\partial\Omega\setminus\delta M$ and Neumann boundary  condition  
     on $\delta\Omega:=\partial\Omega\cap\delta M.$
In the case when  $M=\mathbb R^m$ with $ L=\Delta$ and  Lebesgue measure $\mu,$ the classical Faber-Krahn inequality says that for any domain  $\Omega\subset\mathbb R^m$  
$$\lambda_1(\Omega)\geq C_m\mu(\Omega)^{-\frac{2}{m}}$$
for certain  constant $C_m$ such that equality is attained for balls. Faber-Krahn inequality plays a crucial  role in bounding  the heat kernel from above. 
For a general weighted manifold $(M, \mu),$  this inequality may be not  true. 
However, a  compactness argument  implies that for any geodesic ball $B(x, r),$ there are  $\alpha, b(x, r)>0$  such that for any open subset $\Omega\subset B(x,r)$  
$$\lambda_1(\Omega)\geq b(x,r)\mu(\Omega)^{-\alpha}.$$
If one finds such $\alpha$ and $b(x, r),$  then  there exists a constant $C>0$ such that 
$$p(t,x,y)\leq\frac{C\exp\left(-\frac{d(x,y)^2}{Ct}\right)}{\left(t^2b(x,\sqrt{t})b(y,\sqrt{t})\right)^{\frac{1}{2\alpha}}}$$
for all $x, y\in M$ and all $t>0$ (see  \cite{Gri3}, Theorem 5.2). 

For a  complete  weighted manifold,  
we may  consider relative Faber-Krahn inequality as a property that may  be satisfied or not, which   relates closely    to    volume doubling property and  heat kernel upper estimate  stated 
as follows. 
\begin{property}\label{ap1}  The following properties may be satisfied or not on a weighted manifold$:$  
   \begin{enumerate}
   \item[$\bullet$]     $(FK)$  Relative Faber-Krahn Inequality$:$ there exist $\alpha>0$ and $C>0$ such that for any geodesic ball $B(x, r)\subset M$ and any precompact open set 
   $\Omega\subset B(x, r),$ we have 
   $$\lambda_1(\Omega)\geq\frac{C}{r^2}\left(\frac{V(x,r)}{\mu(\Omega)}\right)^\alpha.$$
      \item[$\bullet$]     $(VD)$ Volume Doubling Property$:$ there exists $D>0$ such that  for all  $x\in M$  and all $r>0,$ we have
      $$V(x, 2r)\leq DV(x,r).$$
   \item[$\bullet$]         $(NE)$ On-Diagonal Upper Estimate$:$  there exists $C>0$ such that for all $x\in M$ and all $t>0,$ we have 
   $$p(t,x,y)\leq\frac{C}{V(x, \sqrt{t})}.$$
   \item[$\bullet$]           $(FE)$ Off-Diagonal Upper Estimate$:$  there exists $C, c>0$ such that for all $x\not=y\in M$ and all $t>0,$ we have 
   $$p(t,x,y)\leq\frac{C}{V(x, \sqrt{t})}\exp\left(-\frac{d(x,y)^2}{ct}\right).$$
\end{enumerate}
\end{property}

\begin{remark} We have  
    \begin{enumerate}
   \item[$\bullet$]  $(FK)$ holds with $\alpha=2/m$ if $M$ is a complete Riemannian manifold with  non-negative Ricci curvature $($see \cite{Gri2}, Theorem $1.4).$ 
    \item[$\bullet$] $(VD)$ implies that  there exist $\alpha, \beta, D_1, D_2>0$ such that for all $x, y\in M$ and  $0<r<R<\infty,$    we have
    $$D_1\left(\frac{R}{r}\right)^\alpha\leq\frac{V(y, R)}{V(x, r)}\leq D_2\left(\frac{d(x,y)+R}{r}\right)^\beta.$$
    \end{enumerate}
\end{remark}

  \begin{theorem}[\cite{Gri3}, Proposition 5.2]\label{equi1} Let $(M, \mu)$ be a complete weighted manifold. Then  
  $$(VD)+(NE)\Longleftrightarrow (FK)\Longleftrightarrow (VD)+(FE).$$
  \end{theorem}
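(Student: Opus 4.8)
The plan is to prove the chain of equivalences
\[
(VD)+(NE)\Longleftrightarrow (FK)\Longleftrightarrow (VD)+(FE)
\]
by establishing the following implications in sequence: $(FK)\Rightarrow(VD)$, $(FK)\Rightarrow(FE)$, $(VD)+(FE)\Rightarrow(NE)$ (trivial, by setting $x=y$ and absorbing the exponential factor), and $(VD)+(NE)\Rightarrow(FK)$. Since $(FE)\Rightarrow(NE)$ is immediate, the two nontrivial directions to close the loop are really $(FK)\Rightarrow(VD)+(FE)$ and $(VD)+(NE)\Rightarrow(FK)$; the remaining arrows are formal.

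First I would prove $(FK)\Rightarrow(VD)$. The idea is that the relative Faber--Krahn inequality applied to $\Omega=B(x,r)\subset B(x,2r)$ gives a lower bound for $\lambda_1(B(x,r))$ in terms of $V(x,2r)/V(x,r)$; on the other hand, a standard heat-kernel/eigenvalue argument (the ``mean value'' or Nash-type argument) provides a general \emph{upper} bound for $\lambda_1(B(x,r))$ of the order $C/r^2$ — for instance by testing the Rayleigh quotient with a Lipschitz cutoff of $B(x,r)$ that is $1$ on $B(x,r/2)$ and supported in $B(x,r)$, combined with the fact that concentric balls cannot all have wildly different volumes (a bootstrap). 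Comparing the two bounds forces $V(x,2r)\le D\,V(x,r)$. This is the argument in Grigor'yan's work; I would cite and adapt \cite{Gri3}.

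Next, $(FK)\Rightarrow(FE)$. Here the classical machinery is: a Faber--Krahn-type inequality (even in the relative, $r$-localized form) implies a family of local Nash inequalities, which by Nash's iteration (or Moser iteration) yields the on-diagonal bound $p(t,x,x)\le C/V(x,\sqrt t)$; then the Gaussian off-diagonal decay $\exp(-d(x,y)^2/ct)$ is obtained from the on-diagonal bound together with the integrated maximum principle of Davies (the exponential weight / Davies--Gaffney argument), using that the ``distance function is Lipschitz with constant $1$'' so that $|\nabla d|\le 1$ a.e. These are exactly Theorem 5.2 of \cite{Gri3} and the cited surrounding results; I would invoke them rather than reprove Nash iteration. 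For the converse direction $(VD)+(NE)\Rightarrow(FK)$, one uses that volume doubling plus the on-diagonal heat kernel bound self-improve: $(VD)$ gives the reverse volume comparison needed to make the localized Nash inequality scale correctly, and then running the heat semigroup estimate backwards — specifically, $\lambda_1(\Omega)\ge$ (first eigenvalue controls $\|P_t\|_{2\to2}\le e^{-\lambda_1(\Omega)t}$, while $(NE)$ plus $(VD)$ controls $\|P_t^\Omega\|_{1\to\infty}\le C/V(x,\sqrt t)$, and interpolating/optimizing in $t$) recovers $\lambda_1(\Omega)\ge (C/r^2)(V(x,r)/\mu(\Omega))^\alpha$.

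The main obstacle is the passage between the \emph{geometric/functional-analytic} quantity $\lambda_1(\Omega)$ and the \emph{heat-kernel} quantity $p(t,x,y)$ in a way that is genuinely local — i.e.\ with the radius $r$ entering in the scale-correct power $r^{-2}$ and with $V(x,r)$ (not some fixed reference volume) appearing — which is where the relative/localized versions of Nash's inequality and the Davies exponential-weight trick do the real work; keeping track of how $(VD)$ is needed precisely to upgrade a pointwise on-diagonal bound into the full relative Faber--Krahn inequality (the reverse implication) is the delicate point. Since all of this is contained in \cite{Gri3} (Proposition 5.2 and its proof), the cleanest exposition is to reduce each implication to the corresponding statement there, checking only that the weighted-manifold hypotheses of our setting (completeness, the measure $d\mu=\sigma^2\,dv$) are exactly those under which Grigor'yan's theorem is stated, so that the citation is legitimate.
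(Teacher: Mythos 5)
This statement is a quoted classical result which the paper does not prove at all --- it is simply attributed to Grigor'yan (\cite{Gri3}, Proposition 5.2) --- and your proposal ultimately reduces each implication to that same citation, which is exactly the paper's treatment. Your sketch of the underlying machinery (the test-function upper bound on $\lambda_1$ versus the $(FK)$ lower bound for doubling, Nash/Moser iteration plus the Davies exponential-weight argument for the Gaussian upper bound, and the semigroup interpolation for the converse) is the standard and correct route, so the proposal is fine.
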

  
    \begin{theorem}[\cite{Gri1}, Theorem 11.1]\label{xoxo}  Let  $(M, \mu)$ be a complete weighted manifold satisfying  $(FK).$ Then, $(M, \mu)$ is non-parabolic if and only if 
    $$\int_1^\infty\frac{dt}{V(x, \sqrt t)}<\infty.$$
      \end{theorem}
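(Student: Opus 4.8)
The plan is to prove the two implications separately, reducing everything — via the equivalences in Theorem~\ref{equi1} — to estimates on the on-diagonal heat kernel $p(t,x,x)$. For the ``if'' direction, assume $\int_1^\infty dt/V(x,\sqrt t)<\infty$. By Theorem~\ref{equi1}, $(FK)$ yields $(NE)$, hence $p(t,x,x)\le C/V(x,\sqrt t)$ for all $t>0$; integrating over $(1,\infty)$ gives $\int_1^\infty p(t,x,x)\,dt\le C\int_1^\infty dt/V(x,\sqrt t)<\infty$, so by the definition of non-parabolicity (applied with $y=x$, using the ``for some, hence for all'' clause) $(M,\mu)$ is non-parabolic. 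This direction is immediate.

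For the ``only if'' direction the goal is the matching on-diagonal lower bound $p(t,x,x)\ge c/V(x,\sqrt t)$ for all $t>0$; granting it, non-parabolicity forces $\int_1^\infty dt/V(x,\sqrt t)\le c^{-1}\int_1^\infty p(t,x,x)\,dt<\infty$. I would obtain this bound by the classical Cauchy--Schwarz / escape-of-heat argument. First, by Theorem~\ref{equi1}, $(FK)$ gives $(VD)$ and $(FE)$; from the volume comparison $V(y,R)\le D_2\big((d(x,y)+R)/r\big)^\beta V(x,r)$ recorded after Property~\ref{ap1}, applied with $y=x$, the volume $V(x,R)$ grows at most polynomially in $R$, so by the volume test for stochastic completeness (see \cite{Gri1}) the heat semigroup is conservative: $\int_M p(t,x,z)\,d\mu(z)=1$ for all $t>0$. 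Next, using $(FE)$ together with the same volume comparison to sum a Gaussian over dyadic annuli, one shows $\int_{M\setminus B(x,A\sqrt t)}p(t,x,z)\,d\mu(z)\le\varepsilon(A)$ with $\varepsilon(A)\to0$ as $A\to\infty$, uniformly in $x$ and $t$; fixing $A_0$ with $\varepsilon(A_0)\le 1/2$ gives $\int_{B(x,A_0\sqrt t)}p(t,x,z)\,d\mu(z)\ge 1/2$. Finally, the semigroup identity $p(2t,x,x)=\int_M p(t,x,z)^2\,d\mu(z)$ (using $p(t,x,z)=p(t,z,x)$) combined with Cauchy--Schwarz over $B(x,A_0\sqrt t)$ gives
\[
p(2t,x,x)\ \ge\ \frac{1}{V(x,A_0\sqrt t)}\left(\int_{B(x,A_0\sqrt t)}p(t,x,z)\,d\mu(z)\right)^{2}\ \ge\ \frac{1}{4\,V(x,A_0\sqrt t)},
\]
and a further application of $(VD)$ (to absorb the factor $A_0$, then monotonicity of $r\mapsto V(x,r)$) yields $p(2t,x,x)\ge c/V(x,\sqrt{2t})$; relabeling $t$ gives the claimed lower bound.

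The only step requiring genuine work is this last one — upgrading the heat kernel upper bound to a lower bound $p(t,x,x)\ge c/V(x,\sqrt t)$ — and it is where stochastic completeness (hence the polynomial volume growth coming from $(VD)$) and the uniform Gaussian control of heat mass outside $B(x,A\sqrt t)$ (coming from $(FE)$ and the doubling volume estimate) are both essential. With the two-sided on-diagonal bound in hand, both implications follow simply by integrating in $t$ over $(1,\infty)$ and reading off the definition of non-parabolicity; I note also that, by $(VD)$, finiteness of $\int_1^\infty dt/V(x,\sqrt t)$ does not depend on the base point $x$, consistently with the ``for some, hence for all'' formulation in the statement.
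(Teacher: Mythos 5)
This theorem is not proved in the paper at all: it is quoted verbatim from Grigor'yan's survey (\cite{Gri1}, Theorem 11.1) and used as a black box, so there is no internal argument to compare yours against. Your proposal is a correct and essentially complete reconstruction of the standard proof. The ``if'' direction is exactly as easy as you say: $(FK)\Rightarrow(NE)$ by Theorem~\ref{equi1}, and integrating $p(t,x,x)\le C/V(x,\sqrt t)$ over $(1,\infty)$ gives non-parabolicity. For the ``only if'' direction, the on-diagonal lower bound $p(t,x,x)\ge c/V(x,\sqrt t)$ is indeed the crux, and your chain of reasoning is the classical Coulhon--Grigor'yan argument: $(VD)$ applied at the fixed point gives polynomial volume growth, hence stochastic completeness by the volume test; $(FE)$ plus doubling summed over dyadic annuli controls the heat mass outside $B(x,A\sqrt t)$ uniformly, so conservativeness forces at least half the mass to stay in $B(x,A_0\sqrt t)$; and Cauchy--Schwarz with the semigroup identity $p(2t,x,x)=\int_M p(t,x,z)^2\,d\mu(z)$ plus one more application of $(VD)$ finishes. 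You are right that stochastic completeness is genuinely needed here (a small Gaussian tail alone says nothing without the total mass being $1$), and your closing remark that $(VD)$ makes the convergence of $\int_1^\infty dt/V(x,\sqrt t)$ independent of $x$ correctly matches the ``for some, hence for all'' clause in the definition of non-parabolicity. The only caveat worth recording is that the paper's setting allows a nonempty Neumann boundary $\delta M$, so the volume test for conservativeness and the annuli computation should be invoked in the weighted-manifold-with-boundary form (as in \cite{Gri1}); this changes nothing in substance.
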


   \subsubsection{Harnack Inequality}~

The classical (elliptic) Harnack inequality concerning harmonic functions on $\mathbb R^m$ asserts  that every positive harmonic function $u$ on a ball $\mathbb B(x, r)\subset\mathbb R^m$ satisfies that 
\begin{equation}\label{zxc}
\sup_{\mathbb B(x, r/2)}u\leq H\inf_{\mathbb B(x, r/2)}u,
\end{equation}
where $H$ is a constant independent of $x, r$ and $u.$ Its best known consequence states   that any positive global harmonic function on $\mathbb R^m$ must be a constant. 
Later, J. Moser \cite{Mo} observed that (\ref{zxc})  is valid  for uniformly elliptic operators
(with measurable coefficients) in divergence form 
on $\mathbb R^m$ and that 
it implies the crucial H\"older continuity property of the local solutions
to the associated elliptic equation. For a weighted manifold, we may consider     
the above elliptic 
Harnack inequality as a property that  could  be  satisfied or not. 
 It  still seems to be an open problem  to characterize 
those weighted manifolds which  satisfy this property.
However, in  1975, Cheng-Yau \cite{C-Y}   showed 
 that for a  complete
   Riemannian manifold $(M, g)$ with   
      ${\rm{Ric}}\geq-Kg$   ($K\leq 0$),    
all positive harmonic functions $u$ on a geodesic ball $B(x, r)\subset M$
satisfies  that 
$$\|\nabla\log u\|\leq H(n)\left(K+\frac{1}{r}\right)$$
on $B(x, r/2).$ When $K=0,$ this gradient Harnack estimate can immediately imply  the validity of (\ref{zxc}). 

The parabolic version
of (\ref{zxc}) was attributed by J. Moser to J. Hadamard \cite{Ha} and B. Pini \cite{Pi}. In \cite{Mo1}, J. Moser gave the parabolic Harnack inequality for uniformly elliptic operators in divergence form on $\mathbb R^m,$ which 
 states that
any positive solution $u$ to the heat equation on a cylinder  $Q=(\tau, \tau+\alpha r^2)\times B(x, r)$ satisfies that 
 \begin{equation}\label{ttt}
   \sup_{Q_-}u\leq H\inf_{Q_+}u
   \end{equation}
for some  constant $H$ independent of  $x, r$ and $u,$ 
where 
   \begin{eqnarray*}
Q_-&=&(\tau+\alpha r^2,\tau+\beta r^2)\times B(x, \eta r), \\
  Q_+&=&(\tau+\gamma r^2,\tau+\sigma r^2)\times B(x,\eta r)
\end{eqnarray*}
for  $0<\alpha<\beta<\gamma<\sigma<\infty$ and $0<\eta<1.$ As for a complete Riemannian manifold with non-negative Ricci curvature, Li-Yau \cite{L-Y} also gave a gradient Harnack estimate for any positive solution of the heat equation on a cylinder. 
  Li-Yau's estimate implies that (\ref{ttt}) holds on complete Riemannian manifolds with non-negative Ricci curvature.

For a complete  weighted manifold, we may consider (\ref{ttt}) as a property that may  be satisfied or not, which 
  relates closely to  Poincar\'e inequality and heat kernel estimates stated as follows. 

\begin{property}\label{ap2}  The following properties  may be  satisfied  or not on a weighted manifold$:$
   \begin{enumerate}
   \item[$\bullet$]     $(PH)$  Parabolic Harnack Inequality$:$  there exists $H>0$ such that any positive solution $u(t,x)$ to the heat equation 
   $$\left(L-\frac{\partial}{\partial t}\right)u(t,x)=0$$
   on a cylinder $(\tau, \tau+4r^2)\times B(x, 2r)$  satisfies the inequality
   $$\sup_{Q_-}u\leq H\inf_{Q_+}u$$
  where 
   \begin{eqnarray*}
Q_-&=&(\tau+r^2,\tau+2r^2)\times B(x,r),  \\
  Q_+&=&(\tau+3r^2,\tau+4r^2)\times B(x,r)
\end{eqnarray*}
 for all $x\in M,$ $0<r<\infty$ and $0<\tau<\infty.$
      \item[$\bullet$]         $(PI)$ Poincar\'e Inequality$:$  there exists $C>0$ such that  for all $x\in M$ and all $r>0$ and for any $f\in\mathscr C^1(B(x,2r)),$ we have 
   $$\frac{C}{r^2}\inf_{\xi\in\mathbb R}\int_{B(x,r)}(f-\xi)^2d\mu\leq\int_{B(x,2r)}\|\nabla f\|^2d\mu.$$
   
   \item[$\bullet$]           $(HK)$  Two-Sided Heat Kernel Bounds$:$  there exist $C_1, c_1, C_2, c_2>0$ such that for all $x, y\in M$ and all $t>0,$ we have 
   $$ \ \ \ \ \ \  \ \ \ \   \    \    \frac{C_1}{V(x, \sqrt{t})}\exp\left(-\frac{d(x,y)^2}{c_1t}\right)\leq   p(t,x,y)\leq\frac{C_2}{V(x, \sqrt{t})}\exp\left(-\frac{d(x,y)^2}{c_2t}\right).$$
\end{enumerate}
\end{property}

  \begin{theorem}[\cite{Gri}, Theorem 5.1]\label{equi2}  Let $(M, \mu)$ be a complete weighted manifold.  Then 
  $$(VD)+(PI)\Longleftrightarrow(PH)\Longleftrightarrow (HK)\Longrightarrow (FK).$$
  \end{theorem}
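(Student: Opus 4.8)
The plan is to prove the three links of the chain separately, the only genuinely analytic input being Moser's iteration adapted to the weighted manifold, in the spirit of Grigor'yan and Saloff-Coste. Concretely, I would establish (i) $(HK)\Rightarrow(FK)$ by reduction to Theorem \ref{equi1}; (ii) $(PH)\Longleftrightarrow(HK)$ by heat-kernel bookkeeping plus a geodesic chaining of the Harnack inequality; and (iii) $(VD)+(PI)\Longleftrightarrow(PH)$, whose hard direction is the Moser iteration.

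For $(HK)\Rightarrow(FK)$: since the upper bound built into $(HK)$ is precisely $(FE)$, by Theorem \ref{equi1} it suffices to extract $(VD)$ from $(HK)$. Specializing $(HK)$ to $y=x$ gives $C_1V(x,\sqrt t)^{-1}\le p(t,x,x)\le C_2V(x,\sqrt t)^{-1}$; using that $t\mapsto p(t,x,x)$ is non-increasing and comparing times $t=r^2$ and $t=4r^2$, one gets $C_1V(x,2r)^{-1}\le p(4r^2,x,x)\le p(r^2,x,x)\le C_2V(x,r)^{-1}$, i.e.\ $V(x,2r)\le(C_2/C_1)V(x,r)$, which is $(VD)$. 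Then $(VD)+(FE)\Rightarrow(FK)$ by Theorem \ref{equi1}. For $(PH)\Longleftrightarrow(HK)$: in the direction $(PH)\Rightarrow(HK)$, apply $(PH)$ to the caloric function $u(t,x)=p(t,x_0,x)$; iterating $(PH)$ backward in time produces a scale-invariant mean-value inequality which, together with $\int_M p(t,x_0,y)\,d\mu(y)=1$, yields the matching on-diagonal bounds $p(t,x,x)\simeq V(x,\sqrt t)^{-1}$; the on-diagonal upper bound is promoted to the Gaussian upper estimate $(FE)$ by the Davies--Gaffney integrated maximum principle (exponential weighting), while the Gaussian lower bound comes from chaining $(PH)$ along a minimizing geodesic from $x$ to $y$ in about $1+d(x,y)^2/t$ steps, each step costing a factor $H$, which manufactures the decay $\exp(-d(x,y)^2/c_1t)$. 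Conversely $(HK)\Rightarrow(PH)$ follows because $(HK)$ yields $(VD)$ as above plus a scale-invariant $L^2$ mean-value property for caloric functions, from which Moser's argument returns $(PH)$ (alternatively one routes $(HK)\Rightarrow(VD)+(PI)\Rightarrow(PH)$ through the next block).

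For $(VD)+(PI)\Longleftrightarrow(PH)$: the implication $(PH)\Rightarrow(VD)$ is the heat-kernel computation above, and $(PH)\Rightarrow(PI)$ follows by testing $(PH)$ on $P_tf$ with $t\simeq r^2$ and unwinding the variational characterization of $\lambda_1$. The substantive direction is $(VD)+(PI)\Rightarrow(PH)$. From $(VD)+(PI)$ one derives, by a covering-and-telescoping argument on dyadic balls, a scale-invariant local Sobolev inequality of the form
\[
\left(\frac{1}{V(x,r)}\int_B |f|^{\frac{2\nu}{\nu-2}}\,d\mu\right)^{\frac{\nu-2}{\nu}}\le \frac{Cr^2}{V(x,r)}\int_B\left(\|\nabla f\|^2+\frac{f^2}{r^2}\right)d\mu
\]
on each ball $B=B(x,r)$, where $\nu>2$ exceeds the doubling exponent of $(VD)$. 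Feeding this Sobolev inequality into Moser's iteration on space-time cylinders gives an $L^\infty$ bound for non-negative subsolutions of the heat equation and, symmetrically, a lower bound for the logarithm of positive supersolutions; the passage from these two one-sided $L^p$ estimates to the genuine Harnack inequality $(PH)$ is then closed by the abstract crossover lemma of Bombieri--Giusti (a parabolic John--Nirenberg device), which also delivers the constant $H$ uniformly in $x,r,\tau$.

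The hard part will be exactly the block $(VD)+(PI)\Rightarrow(PH)$: producing the scale-invariant Sobolev inequality above from $(VD)+(PI)$ on a manifold carrying only a lower Ricci bound, and then carrying the Moser iteration through the space-time cylinder geometry so that the iteration together with the Bombieri--Giusti lemma yields $H$ independent of $x$, $r$ and $\tau$. Everything else is either elementary heat-kernel manipulation or a citation of Theorem \ref{equi1}, so it is this iteration step that I would expect to absorb the bulk of the work.
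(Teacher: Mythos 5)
The paper does not actually prove this theorem: it is quoted verbatim as \cite{Gri}, Theorem 5.1 (the Grigor'yan--Saloff-Coste/Sturm characterization of the parabolic Harnack inequality) and used as a black box, so your proposal is an outline of the literature proof rather than something to be matched against an argument in the text. The architecture you describe --- a scale-invariant Sobolev inequality from $(VD)+(PI)$ fed into Moser iteration and closed by the Bombieri--Giusti crossover lemma; the Gaussian lower bound by chaining $(PH)$ along a minimizing geodesic in about $1+d(x,y)^2/t$ steps; the upper bound via the integrated maximum principle; and $(HK)\Rightarrow(FK)$ through Theorem \ref{equi1} --- is indeed the standard route, and you correctly locate the real work in the block $(VD)+(PI)\Rightarrow(PH)$.

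One step as written is broken, however: the extraction of $(VD)$ from $(HK)$. Comparing times through the monotonicity of $t\mapsto p(t,x,x)$ gives the chain
$$\frac{C_1}{V(x,2r)}\;\le\; p(4r^2,x,x)\;\le\; p(r^2,x,x)\;\le\;\frac{C_2}{V(x,r)},$$
which yields $V(x,r)\le (C_2/C_1)\,V(x,2r)$ --- the trivial direction, not the doubling property. Monotonicity of the on-diagonal heat kernel can only bound small balls by large ones. The correct one-line argument uses the off-diagonal \emph{lower} bound together with $\int_M p(t,x,y)\,d\mu(y)\le 1$: taking $t=r^2$, every $y\in B(x,2r)$ satisfies $p(t,x,y)\ge C_1e^{-4/c_1}/V(x,r)$, and integrating over $B(x,2r)$ gives $V(x,2r)\le C_1^{-1}e^{4/c_1}\,V(x,r)$. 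With that repair (and granting the substantial Moser-iteration analysis you defer), the outline is the standard proof of the cited theorem.
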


   \subsection{Connected Sums of Riemannian Manifolds}~

Let $M_1, M_2$ be      manifolds (with  boundaries possibly) of   dimension $m.$   
  Let $\psi_j:  B_m\hookrightarrow M_j$ be an embedding for $j=1,2,$ in which $B_m$ is  the  unit ball in $\mathbb R^m.$
The \emph{connected sum} $M_1\# M_2$ of 
$M_1, M_2$ is  formed  by removing  
the ball $\psi_j(B_m)$ in  $M_j$ for all $j$ and attaching  the  punctured manifolds  $M_1\setminus \psi_1(B_m),$    
$M_2\setminus \psi_2(B_m)$
to each other via  a
 homeomorphism $\phi: \partial \psi_1(B_m)\to\partial \psi_2(B_m).$ 
That is,  
$$M_1\# M_2=(M_1\setminus \psi_1(B_m))\cup_\phi (M_2\setminus \psi_2(B_m)),$$
where $\psi_j(B_m)$ is  interior to $M_j$ and  
$\partial \psi_j(B_m)$ is bicollared in $M_j$  to guarantee that the connected sum is  a manifold (see Fig. 2).
In other words,  $M_1\# M_2$ is obtained by gluing together  the (Dirichlet) boundaries of 
$M_1\setminus \psi_1(B_m)$ and $M_2\setminus \psi_2(B_m)$ 
by means of $\psi_2\circ \psi^{-1}_1.$ 
 If $M_1, M_2$ are oriented manifolds, then $\psi_1, \psi_2$ are  
    orientation-preserving embeddings such  that 
 $M_1\# M_2$ is oriented. 
 If $M_1, M_2$ are smooth (complex) manifolds, then $\phi, \psi_1,\psi_2$  need to  be smooth  (analytic if it is  possible).  In this case, $M_1\#M_2$ is called a \emph{smooth} (\emph{complex}) \emph{connected sum} of  
 $M_1, M_2.$
                      \begin{figure}[htbp]
  \centering
  \includegraphics[width=10cm]{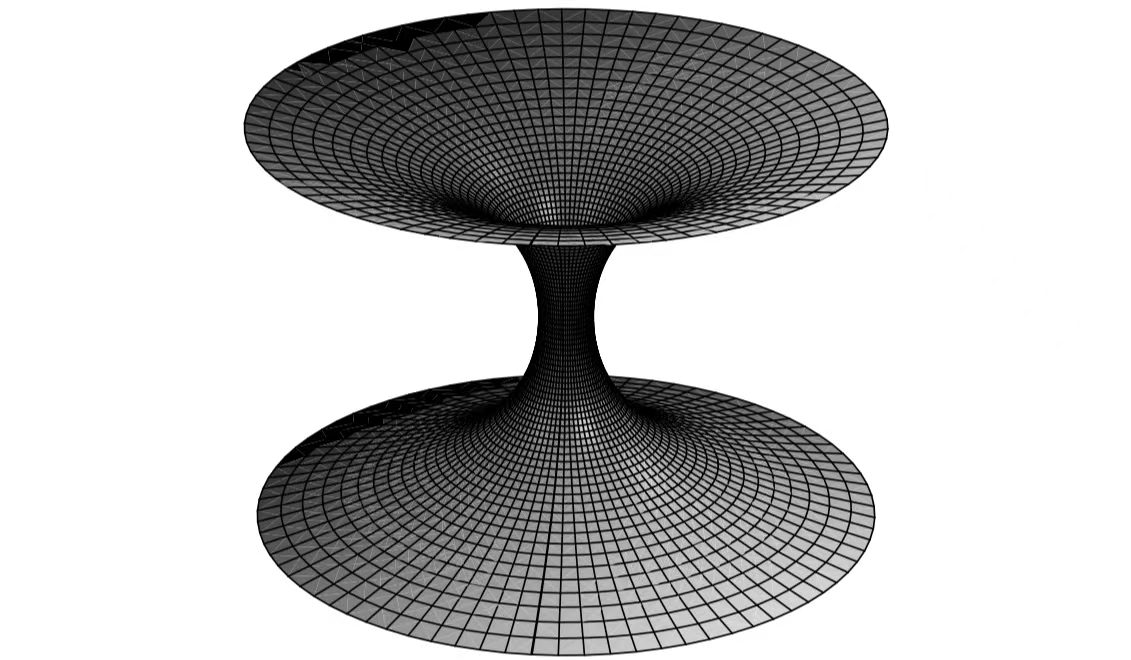}
  \caption{Catenoid}
 \end{figure}
 
  Let $M, M_1,\cdots, M_\vartheta$ be  Riemannian manifolds (with  boundaries possibly) of the same dimension. 
   Say that  $M$ is a connected sum of $M_1,\cdots, M_\vartheta$ (see Fig. 3), 
   if there  is   a compact subset $K\subset M$   (with nonempty interior)  such that $M\setminus K$ 
     is a disjoint union of $\vartheta$ open subsets $E_1,\cdots, E_\vartheta,$
     where  each   $E_j$ is  isomorphic to $M_j\setminus K_j$ for a compact subset $K_j\subset M_j.$  
 For this situation,     $M$  is called  a \emph{Riemannian connected sum} of $M_1,\cdots, M_\vartheta.$ 
        When    $M, M_j^,s$ are Hermitian  manifolds,  the  isometries need to  be analytic. 
      For   this situation,  we call $M$  a \emph{Hermitian connected sum} of $M_1,\cdots, M_\vartheta.$
             One   refers to $K$ as the \emph{central part} of $M,$ and refers  to 
  $E_1,\cdots, E_\vartheta$  as the \emph{ends} of $M.$ 
  We  also regard    
  $M_1,\cdots, M_\vartheta$ as the ends of $M$ for convenience. 
    Due to the definition, each $E_j$ can be     
  identified with 
  $M_j\setminus K_j.$
   For two weighted manifolds $(M, \mu), (M_j,\mu_j),$    
      the isometry is   understood in the sense of weighted manifolds:  it maps
      $\mu$ to   $\mu_j,$ i.e.,  
            $\mu, \mu_j$ coincide on $E_j.$

              For convenience,   the connected sum   $M=M_1\#\cdots\# M_\vartheta$ with central part $K$ and ends $E_1,\cdots, E_\vartheta$ is  also written in   the form 
     $$M=\left(K; \{M_j\}_{j=1}^\vartheta; \{E_j\}_{j=1}^\vartheta\right).$$
    \begin{figure}[htbp]
  \centering
  \includegraphics[width=11cm]{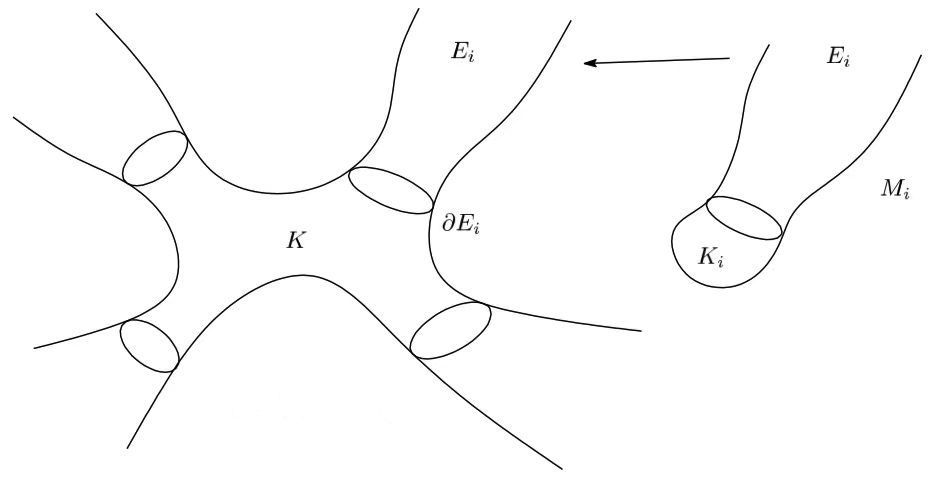}
  \caption{Manifold with ends}
 \end{figure}
 
   Let   $\partial E_j$ be   the topological 
   boundary of $E_j$ with $j=1,\cdots,\vartheta.$  
    Write   $$\partial E_j=(\partial E_j\setminus\delta M)\cup\delta E_j,$$
   where 
   $\partial E_j\setminus\delta M$   is called  the \emph{Dirichlet  boundary} of $E_j,$  and $\delta E:=\partial E_j\cap\delta M$ is called the  \emph{Neumann boundary} of $E_j.$
  It is  trivial  to examine    
$$M=K\cup(\cup_{j=1}^\vartheta E_j), \ \ \ \     \partial K=\cup_{j=1}^\vartheta (\partial E_j\setminus\delta M).$$
 \ \ \ \      Conversely, for   a non-compact  manifold $M$ (the compact case can be  dealt with similarly), if 
     there is  a compact subset $K\subset M$  with smooth boundary such that $M\setminus K$ is a disjoint union 
      of  $\vartheta$ connected open subsets $E_1,\cdots, E_\vartheta$ which are not precompact, then we say that $E_1,\cdots, E_\vartheta$ are the ends of $M.$  One considers  the closure $\overline{E}_j:=E_j\cup\partial E_j$  as a manifold with boundary $\partial E_j.$ By the definition of connected sums, we have 
      $$M=\overline{E}_1\#\cdots\#\overline{E}_\vartheta=\left(K; \{\overline{E}_j\}_{j=1}^\vartheta; \{E_j\}_{j=1}^\vartheta\right).$$
 \vskip\baselineskip

   \subsection{Almost Complex Structures of Connected Sums}~

Let $M=M_1\#\cdots\#M_\vartheta$  be a connected sum of   almost complex manifolds $M_1,\cdots, M_\vartheta$ with the same dimension $m.$
It is always  possible    concerning  the  existence of   almost complex structures for $m=2, 6,$  
because  any  orientable smooth  2-dimensional  manifold is almost complex;  
and always  possible  for  smooth orientable 6-dimensional manifolds, 
because  the only obstruction   to the existence of   almost complex structures is the \emph{third 
  Stiefel-Whitney class} $W_3$ \cite{Ma},  which is additive under the connected sum operations in this case.  
  
  Albanese-Milivojevi\'c \cite{M-A} 
       showed  the  following    existence theorem  of almost structures. 
 
\begin{theorem}[\cite{M-A}, Theorem B]
If  one of $M_j^,s$ is non-compact, then $M$ admits an almost complex structure.
\end{theorem}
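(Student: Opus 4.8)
\noindent The plan is to reduce the existence of an almost complex structure on $M$ to the existence of a \emph{stable} complex structure on $TM$, and then to assemble the latter from the (stable) complex structures on the summands $M_j$. Write $m=2n$: it is even because the $M_j$ are almost complex, and we may assume $n\ge 2$ (for $n=1$ every orientable surface is a Riemann surface) and $\vartheta\ge 2$ (the case $\vartheta=1$ being the hypothesis). Observe that $M$ is orientable, being a connected sum of orientable manifolds, and non-compact, since some summand is; hence $M$ is homotopy equivalent to a CW complex of dimension $\le 2n-1$, and in particular $H^{k}(M;\mathcal{A})=0$ for all $k\ge 2n$ and all coefficients $\mathcal{A}$.

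\noindent First I would establish the following reduction, valid for any smooth $2n$-manifold $N$ homotopy equivalent to a CW complex of dimension $\le 2n-1$: \emph{$N$ admits an almost complex structure if and only if $TN$ admits a stable complex structure}, i.e.\ there is a complex vector bundle $\xi$ with $\xi_{\mathbb{R}}\cong TN\oplus\underline{\mathbb{R}}^{2k}$ for some $k$. Only ``if'' needs an argument. Since the complex Stiefel manifold $U(n+k)/U(n)$ is $2n$-connected and $\dim N\le 2n-1$, obstruction theory produces $k$ linearly independent sections of $\xi$, so $\xi\cong\eta\oplus\underline{\mathbb{C}}^{k}$ with $\operatorname{rank}_{\mathbb{C}}\eta=n$. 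Then $\eta_{\mathbb{R}}\oplus\underline{\mathbb{R}}^{2k}\cong TN\oplus\underline{\mathbb{R}}^{2k}$, and since both $\eta_{\mathbb{R}}$ and $TN$ have rank $2n>\dim N$, this stable isomorphism is already an isomorphism $\eta_{\mathbb{R}}\cong TN$; transporting the complex structure of $\eta$ yields an almost complex structure on $N$. Applied to $N=M$, it now suffices to prove that $TM$ is stably complex.

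\noindent To do so, I would use the decomposition $M=K\cup\bigcup_{j=1}^{\vartheta}E_j$, where $K$ is the compact connected central part and each $E_j$ is an open subset of $M$ isometric to $M_j$ with a ball $B_j$ removed. Each $M_j$ is almost complex, so $TM_j$ is stably complex and the classifying map $M_j\to BSO$ of its stable tangent bundle lifts to $BU$; restricting this lift to the closure $\overline{E_j}\subset M_j$ (over which $TM$ agrees with $TM_j$) gives a lift $\ell_N$ of the stable tangent bundle of $M$ over $N:=\bigsqcup_j\overline{E_j}$. The obstructions to extending $\ell_N$ over $K$ lie in $H^{k+1}(K,\partial K;\pi_k(SO/U))$, where $SO/U$ is the fiber of $BU\to BSO$; by Lefschetz duality together with $K\simeq\bigvee^{\vartheta-1}S^{2n-1}$, all of these vanish except possibly the top one, $H^{2n}(K,\partial K;\pi_{2n-1}(SO/U))\cong\pi_{2n-1}(SO/U)$. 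To kill it, note that $\ell_N$ restricted to each boundary sphere $\partial E_j\cong S^{2n-1}$ extends over the ball $B_j\subset M_j$, because the lift over $M_j$ is defined there. Capping $K$ off with $\vartheta$ disks along its boundary spheres produces, by the very construction of the connected sum, a manifold $\widehat{K}\cong S^{2n}$, whose stable tangent bundle is trivial and hence lifts to $BU$, say by $\ell_0$. Over the disjoint disks $\bigsqcup_j B_j\subset\widehat{K}$ any two lifts are homotopic ($SO/U$ being connected), so $\ell_0|_{\partial K}$ is homotopic, through lifts, to $\ell_N|_{\partial K}$; by the homotopy extension property for $(K,\partial K)$, $\ell_N|_{\partial K}$ therefore extends over $K$. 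Gluing with $\ell_N$ yields a lift $M\to BU$, i.e.\ $TM$ is stably complex, and the reduction step finishes the proof.

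\noindent The crux is the vanishing of this top obstruction in $H^{2n}(K,\partial K;\pi_{2n-1}(SO/U))$: it is precisely the ``destabilization'' phenomenon which, for \emph{closed} $2n$-manifolds, makes ``$TM$ stably complex'' strictly weaker than ``$M$ almost complex'' (e.g.\ $\mathbb{CP}^2\#\mathbb{CP}^2$ is stably complex but not almost complex). Non-compactness of $M$ is exactly what defeats it: it forces $M$ to have the homotopy type of a complex of dimension $\le 2n-1$, which is used in the reduction step (to split off $\underline{\mathbb{C}}^{k}$ and to upgrade stable isomorphism to isomorphism) and, via $\widehat{K}\cong S^{2n}$ being stably parallelizable, to annihilate the residual obstruction over the central part. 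Two routine technical points remain: that the obstruction-theoretic extensions can be carried out smoothly across a bicollar of $\partial K$, and that the bundle identifications above can be taken compatibly with orientations — or, harmlessly, that one simply disregards orientations, since an almost complex structure inducing either orientation of $M$ is acceptable.
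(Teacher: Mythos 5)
First, a point of order: the paper does not prove this statement at all --- it is quoted verbatim from Albanese--Milivojevi\'c [M-A, Theorem B] --- so there is no in-paper argument to compare against. Your proof follows the same overall strategy as the cited source: use openness to reduce ``almost complex'' to ``stably complex'', then assemble a stable complex structure on the connected sum from those on the summands. Your reduction step is correct (the $2n$-connectivity of $U(n+k)/U(n)$ to split off $\underline{\mathbb C}^k$, and rank-versus-CW-dimension cancellation to destabilize, both legitimately using that an open $2n$-manifold has the homotopy type of a complex of dimension $\le 2n-1$), and so is the obstruction-theoretic bookkeeping over the central part $P=S^{2n}\setminus\bigsqcup_j\mathrm{int}(D_j)$: by Lefschetz duality only $H^{2n}(P,\partial P;\pi_{2n-1}(SO/U))$ survives, and $\pi_{2n-1}(SO/U)\cong\pi_{2n}(O)$ is zero unless $2n\equiv 0\pmod 8$, where it is $\mathbb Z/2$.

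The one step you must tighten is the claim that ``over the disjoint disks any two lifts are homotopic ($SO/U$ being connected), so $\ell_0|_{\partial K}$ is homotopic through lifts to $\ell_N|_{\partial K}$.'' The two lifts you compare over the $j$-th disk are lifts of two \emph{different} maps to $BSO$ (the stable tangent bundle of $S^{2n}$ restricted to $D_j$ versus that of $M_j$ restricted to the removed ball $K_j$); connectivity of $SO/U$ only compares lifts of a fixed map. To obtain a \emph{vertical} homotopy over $\partial E_j$ covering the constant homotopy of $\tau|_{\partial E_j}$ --- which is what your homotopy-extension step requires --- you must first homotope the two disk-fillings of $\tau|_{\partial E_j}$ into one another rel boundary; this is possible because the glued stable bundle over $D_j\cup_{S^{2n-1}}K_j$ is the stable tangent bundle of a homotopy $2n$-sphere, hence trivial. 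With that inserted, the difference class of the two boundary lifts is pulled back from $H^{2n-1}(D^{2n};\pi_{2n-1}(SO/U))=0$ and the argument closes; note the issue is vacuous unless $2n\equiv0\pmod 8$. A standard way to bypass this boundary matching entirely is to realize $M$ as the outgoing boundary of the cobordism obtained from $(\bigsqcup_j M_j)\times[0,1]$ by attaching $\vartheta-1$ one-handles: a stable complex structure extends over $1$-handles for dimension reasons and restricts to boundaries, so $M$ is stably complex, and your reduction lemma finishes. Finally, a small slip in your closing commentary: the stable parallelizability of $\widehat K\cong S^{2n}$ has nothing to do with the non-compactness of $M$; openness is used only in the reduction step.
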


However, it is possible that  the almost structures for $M$  do not exist  if all $M_j^,s$ are closed.  
For example,  we discuss  the case that $m=4k.$ 
 F. Hirzebruch (see \cite{Hir}, p. 777) proved that the Euler characteristic of $M_j$  satisfies  
$$\chi(M_j)\equiv(-1)^k\tau(M_j) \   \text{mod}   \ 4,$$
where $\tau(M_j)$
is  the  \emph{signature} of $M_j$ (see \cite{M-S}).  On the other hand, since   
        \begin{eqnarray*}
                \tau(M)&=&\sum_{j=1}^\vartheta\tau(M_j), \\
        \chi(M)&=&\sum_{j=1}^\vartheta\chi(M_j)-2(\vartheta-1),
        \end{eqnarray*}
        we have     
        $$\chi(M)\equiv (-1)^k\tau(M)-2(\vartheta-1) \   \text{mod} \   4.$$
It is therefore   
$$\chi(M)\not\equiv(-1)^k\tau(M) \   \text{mod} \   4$$
for an even number $\vartheta,$ i.e.,  $M$ does not admit   any almost complex structure when  $m=4k$ and $\vartheta=2l.$
Under  which conditions  does  $M$ possess  an almost complex structure? 
Some results of existence were obtained   in \cite{M-A, G-K, Yang1, Yang2}. 
For example,  Albanese-Milivojevi\'c \cite{M-A} showed  the following theorem.

\begin{theorem}[\cite{M-A}, Theorem C]
Assume that all $M_j^,s$ are closed, then $M$ admits an almost complex structure if  one of the following is satisfied$:$

$(a)$ $m=8k+2,$ and $\vartheta\equiv1$ {\rm{mod}} $(4k)!;$

$(b)$ $m=8k+6,$ and $\vartheta\equiv1$ {\rm{mod}} $\frac{(4k+2)!}{2}.$
\end{theorem}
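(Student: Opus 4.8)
The plan is to run obstruction theory for the tangent bundle. Write $n=m/2$, so $n=4k+1$ in case (a) and $n=4k+3$ in case (b). An almost complex structure on $M$ is a lift of the classifying map $\tau_M\colon M\to BSO(m)$ along the fibration $BU(n)\to BSO(m)$, whose fibre is $SO(m)/U(n)$, and the obstructions to such a lift lie in $H^{i+1}(M;\pi_i(SO(m)/U(n)))$. Since each $M_j$ is closed and almost complex, the end $E_j\cong M_j\setminus B^m$ carries the restriction of an almost complex structure on $M_j$; fixing these, the obstruction to extending over all of $M$ is, by excision, concentrated on the central part: it lies in $H^{i+1}(K,\partial K;\pi_i(SO(m)/U(n)))$. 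Realising $M$ as the standard connected sum, $K$ may be taken to be $S^m$ with $\vartheta$ disjoint open balls removed, so $K\simeq\bigvee_{\vartheta-1}S^{m-1}$; by Poincar\'e--Lefschetz duality $H^{i+1}(K,\partial K;A)\cong H_{m-1-i}(K;A)$, which vanishes for $1\le i\le m-1$ except at $i=m-1$. Hence the whole obstruction is a single class $\mathfrak o\in H^{m}(K,\partial K;\pi_{m-1}(SO(m)/U(n)))\cong\pi_{m-1}(SO(m)/U(n))$.

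Next I would pin down $\mathfrak o$. By Bott periodicity $\pi_i(SO/U)\cong\pi_{i+1}(O)$ for $i\ge1$, which is nonzero only for $i\equiv0,2,6,7\pmod 8$; in particular $\pi_{m-1}(SO/U)=0$, since $m-1\equiv1$ (case (a)) or $m-1\equiv5$ (case (b)) mod $8$. Consequently: (i) the almost complex structures on the ends already glue to a \emph{stable} almost complex structure $\xi_0$ on $M$, since the only possibly nonzero gluing obstruction over $(K,\partial K)$ is the top one, in $H^m(K,\partial K;\pi_{m-1}(SO/U))=0$; and (ii) the stabilisation of $\mathfrak o$ in $\pi_{m-1}(SO/U)=0$ is trivial, so $\mathfrak o$ (which a priori is an arbitrary element of $\pi_{m-1}(SO(m)/U(n))$) is, given data that is honestly almost complex on the ends, pinned down by the integer measuring the gap between the Euler class of $TM$ and the top Chern class of the chosen stable structure. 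Thus $\mathfrak o$ is encoded by $\chi(M)-\langle c_n(\xi_0),[M]\rangle$. Additivity of Euler characteristics gives $\chi(M)=\sum_j\chi(M_j)-2(\vartheta-1)$, and additivity of Chern numbers under connected sum (the neck spheres being stably parallelisable, hence contributing $0$ with their standard stable structure) gives $\langle c_n(\xi_0),[M]\rangle=\sum_j\langle c_n(M_j),[M_j]\rangle=\sum_j\chi(M_j)$. So $\mathfrak o$ is encoded by $-2(\vartheta-1)$, up to the freedom of re-choosing the stable structure.

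The last step quantifies that freedom. Two stable almost complex structures on $M$ differ by a map $M\to SO/U$; altering $\xi_0$ by the top-degree part of such a difference, an element of $H^m(M;\pi_m(SO/U))\cong\mathbb Z$ (note $m\equiv2,6\pmod 8$, so $\pi_m(SO/U)=\mathbb Z$), leaves $c_1,\dots,c_{n-1}$ unchanged and changes $\langle c_n,[M]\rangle$ by the image of $\pi_m(SO/U)\to\pi_m(BU)\xrightarrow{c_n}\mathbb Z$. A generator of $\pi_m(BU)=\widetilde K(S^m)$ has top Chern number $\pm(n-1)!$ (Newton's identities together with the integrality of the Chern character), while $\pi_m(SO/U)\to\pi_m(BU)$ is, under the Bott identifications, complexification $KO^{-(m+2)}(\mathrm{pt})\to KU^{-(m+2)}(\mathrm{pt})$: multiplication by $2$ when $m+2\equiv4\pmod 8$ (case (a), where $(n-1)!=(4k)!$) and an isomorphism when $m+2\equiv0\pmod 8$ (case (b), where $(n-1)!=(4k+2)!$). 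Hence $\langle c_n,[M]\rangle$ can be shifted by any multiple of $D$, with $D=2\,(4k)!$ in case (a) and $D=(4k+2)!$ in case (b). So $\mathfrak o$ can be made to vanish --- and $M$ acquires an almost complex structure --- whenever $-2(\vartheta-1)\equiv0\pmod D$, i.e.\ whenever $\vartheta\equiv1\pmod{(4k)!}$ in case (a) and $\vartheta\equiv1\pmod{(4k+2)!/2}$ in case (b), which is the assertion. (For $k=0$ this collapses to the classical statements that every orientable surface, and every closed orientable $6$-manifold with vanishing $W_3$, is almost complex --- here assembled from almost complex pieces, so with no constraint on $\vartheta$.)

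I expect the genuinely delicate points to be the two homotopy-theoretic inputs above. First, one must verify that $\mathfrak o$ is controlled solely by the Euler-minus-top-Chern integer: this uses the stability isomorphism $\pi_i(SO(m)/U(n))\cong\pi_i(SO/U)$ for $i<m-1$ together with the vanishing $\pi_{8k+1}(SO/U)=\pi_{8k+5}(SO/U)=0$ (which is exactly where the restriction to $m\equiv2,6\pmod 8$ enters), plus the fact that the ends being \emph{honestly} almost complex kills the torsion ambiguity in $\pi_{m-1}(SO(m)/U(n))$. Second, the entire arithmetic --- $(4k)!$ versus $(4k+2)!/2$ --- rests on the value $(n-1)!$ of the top Chern class of the Bott generator and on the parity of the $KO\to KU$ complexification factor, so those computations must be done with care. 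The geometric ingredients (the homotopy type of $K$, excision, and additivity of $\chi$ and of Chern numbers under connected sum) should be routine.
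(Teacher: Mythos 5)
This theorem is quoted in the paper verbatim from Albanese--Milivojevi\'c \cite{M-A} with no proof supplied, so there is no internal argument to compare against; measured against the proof in that reference, your reconstruction is correct and follows essentially the same route. You reduce to the classical criterion that a closed $2n$-manifold is almost complex iff it carries a stable almost complex structure $\xi$ with $c_n(\xi)=e(TM)$, compute the defect $\chi(M)-\sum_j\chi(M_j)=-2(\vartheta-1)$ for the glued stable structure, and divide by the indeterminacy $2\,(4k)!$ (resp.\ $(4k+2)!$) arising from the top Chern number $(n-1)!$ of the Bott generator together with the index of the image of $\pi_{m}(SO/U)\to\pi_{m}(BU)$; all of this arithmetic, including the $KO\to KU$ complexification factor in degrees $\equiv 4$ versus $\equiv 0 \bmod 8$, checks out and yields exactly the stated congruences on $\vartheta$.
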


   \subsection{Heat Kernel on  Connected Sums}~

Let  
     $$M=\left(K; \{M_j\}_{j=1}^\vartheta; \{E_j\}_{j=1}^\vartheta\right)$$
be a   connected sum, 
where $(M,\mu), (M_1,\mu_1),\cdots, (M_\vartheta,\mu_\vartheta)$ are complete non-compact weighted   manifolds. 
Let $p(t, x, y)$ be   the heat kernel of $(M, \mu).$  We recall 
that $B(x, r)$ is  the geodesic ball centered at $x$ with radius $r$ in $M.$
For   $x\in K\cup E_j$ with $j=1,\cdots,\vartheta,$ we put  
 $$V_j(x, r)=\mu\left(B(x,r)\cap(K\cup E_j)\right).$$  
Fix a reference point $o$ in the interior of $K.$  Set   
$$V_0(r)=\min_{1\leq j\leq\nu}V_j(r)$$
with 
$$V_j(r)=V_j(o, r), \ \ \ \     j=1,\cdots,\vartheta.$$
Again,  set for   $x\in M$ 
$$|x|=\sup_{y\in K}d(x,y).$$
Note that $|x|$ is bounded away from 0 due to the compactness of $K.$ 
Define a positive  function $H(x, t)$ on $M\times(0, \infty)$ as follows
$$H(x, t)=\min\left\{1, \  \frac{|x|^2}{V_{i_x}(|x|)}+\left(\int_{|x|^2}^t\frac{ds}{V_{i_x}(\sqrt{s})}\right)^+\right\},$$
where 
  $$ i_x=\begin{cases}
0, \ \  &   x\in K; \\
j,  \ \ &   x\in E_j \ \text{for some} \ j. 
\end{cases}$$
Evidently, $H(x,t)$ is also bounded away from 0 if $|x|$ is bounded from above. 
When $V_{i_x}(r)$ is satisfied with   
$$\frac{V_{i_x}(r_2)}{V_{i_x}(r_1)}\geq C\left(\frac{r_2}{r_1}\right)^{2+\epsilon}, \ \ \  \    r_2>r_1\geq1$$
for some $C, \epsilon>0,$ we have  
$$H(x,t)\simeq \frac{|x|^2}{V_{i_x}(|x|)}$$ 
 (see  details in the proof of Corollary 4.5,  \cite{hit}). 
   Let  $\gamma: [0,1]\to M$ be a curve with  $\gamma(0)=x$ and $\gamma(1)=y.$  Denote by 
 $\|\gamma\|$  the length of   $\gamma.$  Set 
  $$ d_\emptyset(x,y)=\begin{cases}
  \inf\big\{\|\gamma\|:    \gamma\cap K=\emptyset\big\}, &   x, y\in E_j  \ \text{for some} \ j; \\
\infty,   &  \text{otherwise}
\end{cases}$$
and
$$d_+(x,y)=\inf\big\{\|\gamma\|:   \gamma\cap K\not=\emptyset\big\}.$$
It is not hard    to verify   
\begin{equation}\label{min}
d(x, y)=\min\big\{d_\emptyset(x,y), \ d_+(x,y)\big\}.
\end{equation}
When $x,y$ belong to the same end,  we have     
$$d(x,y)\leq d_\emptyset(x,y)\leq d(x,y)+C_K,
$$
$$ \ \ \ \   |x|+|y|-2{\rm{diam}}(K)\leq d_+(x,y)\leq|x|+|y|,$$
where $C_K\geq0$ is a constant depending only on $K.$

\begin{theorem}[\cite{Gri}, Theorem 4.9]\label{upper}
Let $(M,\mu)$ be a connected sum of complete non-compact weighted manifolds $(M_1, \mu_1), \cdots, (M_\nu, \mu_\nu).$ Assume that $(M,\mu)$ is non-parabolic and  each   $(M_j, \mu_j)$ satisfies $(FK)$ in Property $\ref{ap1}.$
  Then, there exist  constants $C_2, c_2>0$ such that
\begin{eqnarray*}
p(t,x,y)&\leq& C_2\left(\frac{H(x,t)H(y,t)}{V_0(\sqrt{t})}+\frac{H(x,t)}{V_{i_y}(\sqrt{t})}+\frac{H(y,t)}{V_{i_x}(\sqrt{t})}\right)\exp\left(-\frac{d_+(x,y)^2}{c_2t}\right) \\
&&+\frac{C_2}{\sqrt{V_{i_x}(x, \sqrt{t})V_{i_y}(y, \sqrt{t})}}\exp\left(-\frac{d_\emptyset(x,y)^2}{c_2t}\right)
\end{eqnarray*}
holds  for all $x,y\in M$ and all $t>0.$
\end{theorem}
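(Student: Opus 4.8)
The plan is to analyse the diffusion $X_t$ generated by $\overline L$ on $M$ and to split $p(t,x,y)$ according to whether the sample path joining $x$ to $y$ ever meets the central part $K$. Let $\tau$ be the first hitting time of $K$. The strong Markov property---equivalently, the parabolic maximum principle applied after cutting $M$ along $\partial K$---gives
\begin{equation*}
p(t,x,y)=p^{M\setminus K}(t,x,y)+\mathbb E_x\!\left[\mathbf 1_{\{\tau<t\}}\,p(t-\tau,X_\tau,y)\right],
\end{equation*}
where $p^{M\setminus K}$ is the Dirichlet heat kernel of the open set $M\setminus K$; the first term vanishes unless $x$ and $y$ lie in the same end $E_j$, since the $E_j$ are the connected components of $M\setminus K$.

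First I would estimate the local term. As a weighted manifold $E_j$ is isometric to $M_j\setminus K_j$, so domain monotonicity gives $p^{M\setminus K}(t,x,y)\le p_{M_j}(t,x,y)$ under this identification; since $M_j$ satisfies $(FK)$, Theorem \ref{equi1} supplies the off-diagonal Gaussian bound $(FE)$ on $M_j$. The volume function of $M_j$ coincides with that of the end $E_j$ there, and the discrepancy at large radii caused by the bounded glued collar is absorbed using $(VD)$ on $M_j$; symmetrising in $x,y$ and using that every path realising $p^{M\setminus K}$ avoids $K$, hence has length at least $d_\emptyset(x,y)$, one reaches a bound $C\bigl(V_{i_x}(x,\sqrt t)V_{i_y}(y,\sqrt t)\bigr)^{-1/2}\exp\bigl(-d_\emptyset(x,y)^2/ct\bigr)$, which is the second line of the claim.

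The passage term is where the real work lies. On $\{\tau<t\}$ one has $X_\tau\in\partial K$, a compact hypersurface on which, for each fixed elapsed time, the heat kernel is uniformly comparable; so after one more application of the strong Markov property the task reduces to (i) controlling the flux $\mathbb E_x[\mathbf 1_{\{\tau<t\}}\varphi(X_\tau)]$ of the diffusion through $K$ and (ii) summing over repeated excursions that return to $K$. The key estimate to be proved is $\mathbb P_x(\tau\le t)\lesssim H(x,t)$ for $x\in E_j$, which I would obtain from a capacity and Green-function computation on the shells $\{|y|<\rho\}$ of $E_j$: the term $|x|^2/V_{i_x}(|x|)$ is the cost of descending from the shell of radius $|x|$ to $K$, while $\int_{|x|^2}^{t}ds/V_{i_x}(\sqrt s)$ accounts for the elapsed time and is finite precisely because the end is non-parabolic, consistently with Theorem \ref{xoxo}. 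Inserting this, every contribution whose path meets $K$ has length at least $d_+(x,y)$, producing the factor $\exp(-d_+(x,y)^2/ct)$; a bookkeeping of whether each of $x$ and $y$ contributes a hitting cost $H(\cdot,t)$ or a local diffusion density at scale $\sqrt t$ yields the three summands $H(x,t)H(y,t)/V_0(\sqrt t)$, $H(x,t)/V_{i_y}(\sqrt t)$ and $H(y,t)/V_{i_x}(\sqrt t)$, and summing the geometric series over return visits closes the estimate. The main obstacle is step (ii): the iteration over repeated crossings of $K$ must be organised so that the Gaussian factors in $d_+$ and $d_\emptyset$ survive at each return rather than degrade, which is handled by enlarging $c_2$ and repeatedly chaining with $\exp(-a^2/s)\exp(-b^2/(t-s))\le\exp(-(a+b)^2/t)$. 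Finally, $d(x,y)=\min\{d_\emptyset(x,y),d_+(x,y)\}$ shows that the two Gaussian factors together still dominate $\exp(-d(x,y)^2/ct)$, so nothing is lost.
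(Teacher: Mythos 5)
This statement is not proved in the paper at all: it is quoted verbatim as Theorem 4.9 of Grigor'yan--Saloff-Coste \cite{Gri}, so there is no internal argument to compare against. Measured against the actual proof in that source, your sketch correctly identifies the governing strategy (the strong Markov decomposition at the first hitting time $\tau$ of $K$, domain monotonicity plus $(FK)\Rightarrow(FE)$ for the Dirichlet term, and the hitting-probability bound $\mathbb P_x(\tau\le t)\lesssim H(x,t)$), but two of its load-bearing steps are gaps rather than proofs.

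First, the passage term $\mathbb E_x\bigl[\mathbf 1_{\{\tau<t\}}\,p(t-\tau,X_\tau,y)\bigr]$ contains the unknown kernel $p$ itself, evaluated from a point of $\partial K$ to $y$; the decomposition is exact, so there is no ``geometric series over return visits'' to sum -- that phrase misdescribes how the circularity is broken. What is actually needed, and what occupies most of the source's argument, are two separate a priori estimates: a central on-diagonal bound $\sup_{z,z'\in K}p(s,z,z')\le C/V_0(\sqrt s)$ (this is the only place the quantity $V_0=\min_j V_j$ can enter, and it is derived from non-parabolicity of $M$ together with $(FK)$ on the ends, not from excursion counting), and then a bound for $p(s,z,y)$ with $z\in K$ of the form $C\bigl(H(y,s)/V_0(\sqrt s)+1/V_{i_y}(\sqrt s)\bigr)e^{-|y|^2/cs}$, obtained by applying the same decomposition from the $y$-side. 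Only after these are in hand does inserting $\mathbb P_x(\tau\le t)\lesssim H(x,t)$ and chaining the Gaussians via $a^2/s+b^2/(t-s)\ge(a+b)^2/t$ produce the three summands; your sketch asserts the three-term structure as the outcome of ``bookkeeping'' without the intermediate estimates that generate it. Second, the bound $\mathbb P_x(\tau\le t)\lesssim H(x,t)$ is itself a substantial theorem (Grigor'yan's hitting-probability paper \cite{hit}); the one-sentence capacity heuristic you give for it is plausible but is not a proof, and you should check which hypotheses ($(FK)$ alone versus two-sided bounds) it actually requires. The treatment of the Dirichlet term is essentially right, modulo the standard care needed to replace $d_{M_j}(x,y)$ by $d_\emptyset(x,y)$ (an additive constant absorbed by enlarging $c_2$) and to symmetrise $V_{i_x}(x,\sqrt t)$ and $V_{i_y}(y,\sqrt t)$ using $(VD)$.
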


Each  term in  the upper bound of $p(t,x,y)$   in Theorem \ref{upper}  has a geometric meaning,  corresponding to  a certain  way that  a Brownian particle may move from $x$ to $y.$ To start with, the last term estimates the probability  of    getting  from  $x$ to $y$ without touching $K.$
The third term
 (similarly, the second term) estimates the probability 
   that started  at $x,$ a Brownian particle hits $K$ before  time $t$ and then  reaches  $y$  in   time of order $t.$
Finally,  the first term estimates the   probability  that     a Brownian particle     hits $K$ before   time $t,$ loops from $K$ to $K$ in time of order $t,$ and finally reaches $y$ in  time  smaller than $t.$

\begin{theorem}[\cite{Gri}, Theorem 5.10]\label{lower}
Let $(M,\mu)$ be a connected sum of complete non-compact weighted manifolds $(M_1, \mu_1), \cdots, (M_\vartheta, \mu_\vartheta).$ Assume that  each   $(M_j, \mu_j)$ is a non-parabolic manifold satisfying  $(PH)$ Property $\ref{ap2}.$
   Then,  there exist constants $C_1, c_1>0$ such that
\begin{eqnarray*}
p(t,x,y)&\geq& C_1\left(\frac{H(x,t)H(y,t)}{V_0(\sqrt{t})}+\frac{H(x,t)}{V_{i_y}(\sqrt{t})}+\frac{H(y,t)}{V_{i_x}(\sqrt{t})}\right)\exp\left(-\frac{d_+(x,y)^2}{c_1t}\right) \\
&&+\frac{C_1}{\sqrt{V_{i_x}(x, \sqrt{t})V_{i_y}(y, \sqrt{t})}}\exp\left(-\frac{d_\emptyset(x,y)^2}{c_1t}\right)
\end{eqnarray*}
holds  for all $x,y\in M$ and all $t>0.$
\end{theorem}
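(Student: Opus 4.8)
The plan is to produce, for each of the four summands in the upper bound of Theorem \ref{upper} (whose shape the answer must match), a distinguished family of Brownian paths from $x$ to $y$ whose transition probability is bounded below by a constant multiple of that summand; since $p(t,x,y)$ dominates each such probability, adding the four lower bounds — at a cost of a factor $4$ — gives the claim. The standing inputs are: $(PH)\Leftrightarrow(HK)$ on each $M_j$ by Theorem \ref{equi2}, so every end carries genuine two-sided Gaussian heat kernel bounds; the non-parabolicity of each $M_j$, equivalently $\int^{\infty}V_j(\sqrt s)^{-1}\,ds<\infty$ by Theorem \ref{xoxo} (applicable since $(PH)\Rightarrow(FK)$); and the local parabolic Harnack inequality, valid on any fixed relatively compact region of $M$ and in particular on a fixed neighbourhood of the central part $K$, which is the device for chaining estimates through the neck.

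\emph{The $d_\emptyset$-term.} This matters only when $x,y$ lie in a common end $E_j$; otherwise $d_\emptyset(x,y)=\infty$ and the term vanishes. By domain monotonicity, $p(t,x,y)\ge p_{E_j}(t,x,y)$, the Dirichlet heat kernel of the open set $E_j$. Since $E_j$ is isometric to $M_j\setminus K_j$ and deleting a compact set leaves the geometry of $M_j$ (which satisfies $(HK)$) unchanged away from $K_j$, one has $p_{E_j}(t,x,y)\gtrsim\big(V_j(x,\sqrt t)V_j(y,\sqrt t)\big)^{-1/2}\exp\!\big(-c\,d_{E_j}(x,y)^2/t\big)$ whenever $x,y$ sit at distance $\gtrsim\sqrt t$ from the Dirichlet boundary; the boundary-proximity case reduces to this by a short Harnack chain that moves the endpoints a controlled distance into $E_j$, the multiplicative loss being absorbed into constants. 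Using $d(x,y)\le d_{E_j}(x,y)\le d_\emptyset(x,y)\le d(x,y)+C_K$ to push the additive constant $C_K$ into $c_1$ reproduces the last term.

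\emph{The three $d_+$-terms.} The crux is a two-sided estimate for the heat reaching the central region from an end: for $x\in E_{i_x}$ and $z$ in a fixed neighbourhood of $o$,
\[p(s,x,z)\simeq\Big(H(x,s)\,V_0(\sqrt s)^{-1}+V_{i_x}(\sqrt s)^{-1}\Big)\exp\!\big(-|x|^2/cs\big),\]
which is itself the case of the theorem with $y$ near $K$; it is proved by decomposing the path at the first hitting time $\tau_K$ of $K$ and invoking the strong Markov property, the Gaussian bounds $(HK)$ on $M_{i_x}$ for the parabolic-scale behaviour, and the convergence of $\int^\infty V_{i_x}(\sqrt s)^{-1}\,ds$ to control long excursions — this is the step where the truncation $\min\{1,\cdot\}$ and the precise shape of $H$ arise (the prefactor $|x|^2/V_{i_x}(|x|)$ being the capacitary term). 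Here the first summand, carrying $V_0=\min_jV_j$, comes from paths that traverse the neck and linger near $K$, where the heat density decays like $1/V_0(\sqrt s)$; the second, from paths whose bulk remains in $E_{i_x}$. Granting this, the general case follows by Chapman--Kolmogorov, $p(t,x,y)\ge\int_{B(o,1)}p(t/2,x,z)\,p(t/2,z,y)\,d\mu(z)$, combined with the symmetric estimate for $p(t/2,z,y)$: when $y\in E_{i_y}$ this inserts the factor $H(y,t)$ into the first term and the free factor $V_{i_y}(\sqrt t)^{-1}$ into the second, and one assembles the Gaussian factors via $\exp(-|x|^2/cs)\exp(-|y|^2/cs)\gtrsim\exp(-d_+(x,y)^2/c_1t)$, valid after enlarging $c_1$ because $d_+(x,y)\ge|x|+|y|-2\,\mathrm{diam}(K)$.

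\emph{Main obstacle.} The genuinely laborious point is the two-sided control of the time-truncated central-heat functional by $H(x,t)$, uniformly in $x$ and $t$: one must match the Brownian mass that has moved from $x$ into the core by time $t$ with $H(x,t)$ times the Gaussian cost, which forces a careful interplay between the non-parabolic Green-function estimates on $M_{i_x}$ (long excursions), the short-time Gaussian bounds $(HK)$ (parabolic scale), and the capacitary interpretation of $|x|^2/V_{i_x}(|x|)$; the saturation encoded by $\min\{1,\cdot\}$ is precisely the statement that this mass cannot exceed the total harmonic measure of $K$. A second, more technical, difficulty is keeping the Harnack chaining through $K$ uniform in the time scale: for large $t$ this is a parabolic Harnack inequality on a fixed neighbourhood of $K$ (legitimate because that neighbourhood is relatively compact), while for small $t$ all comparisons take place at unit scale, where $p$ on $M$ is comparable to a Euclidean kernel and the statement follows from local estimates; throughout one must cope with the degeneration of Dirichlet heat kernels near $\partial K$, which is why the Chapman--Kolmogorov splittings are organised around points at distance of order $\sqrt t$ from the neck rather than at unit distance.
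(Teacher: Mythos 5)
The paper does not prove this statement: Theorem \ref{lower} is imported verbatim from Grigor'yan and Saloff-Coste \cite{Gri} (Theorem 5.10 there), so there is no in-paper argument to compare yours against. Judged on its own terms, your outline reproduces the correct architecture of the cited proof: the dichotomy between paths that avoid the central part $K$ (giving the $d_\emptyset$-term via the Dirichlet heat kernel of the end) and paths that hit $K$ (giving the three $d_+$-terms via a splitting through a fixed neighbourhood of $o$ and the identification of the time-truncated hitting functional with $H(x,t)$), with the Gaussian two-sided bounds on each end supplied by $(PH)\Leftrightarrow(HK)$ and non-parabolicity controlling the long-time behaviour. The elementary bookkeeping ($p\ge\max$ of the four contributions, hence $p\ge$ a quarter of their sum) is sound.

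That said, what you have written is a road map rather than a proof: every genuinely hard step is asserted. (i) The lower bound for the Dirichlet heat kernel $p_{E_j}$ of an exterior domain is not a consequence of ``the geometry away from $K_j$ is unchanged''; it is a separate theorem, and your reduction of the boundary-proximity case by ``a short Harnack chain'' is not available as stated, because the Dirichlet kernel degenerates (is not Harnack-comparable) near $\partial E_j$ --- in that regime one must instead show that the $d_+$-terms dominate the $d_\emptyset$-term. (ii) The two-sided estimate $\mathbb{P}_x(\tau_K\le t)\simeq H(x,t)$ and the resulting central estimate for $p(s,x,z)$ with $z$ near $o$ constitute the main content of \cite{hit} and of the relevant sections of \cite{Gri}; you correctly identify this as the crux but do not supply it. (iii) Your final Gaussian assembly $\exp(-|x|^2/cs)\,\exp(-|y|^2/cs)\gtrsim\exp(-d_+(x,y)^2/c_1t)$ with $s=t/2$ fails for small $t$ when both points lie near $K$: the left-hand side is then of order $\exp(-O(1)/t)$ while the right-hand side is of order $1$, so the small-time regime requires the separate local argument you only allude to. As a summary of the strategy of \cite{Gri} the proposal is accurate; as a self-contained proof it has gaps exactly where you flag the ``main obstacles.''
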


According to Theorem \ref{equi2}, we see that  
 $(PH)$ implies $(FK).$  
 Furthermore, using  Proposition 14.1 in  \cite{Gri1}, we also note that 
  the non-parabolicity of one  end $(M_j, \mu_j)$ implies the non-parabolicity of $(M,\mu).$
   Hence, the upper bound of  the heat kernel given  in Theorem \ref{upper} remains valid   under the conditions of Theorem \ref{lower}. 

 \section{Setups  on Complete K\"ahler Connected Sums}
 \vskip\baselineskip

\subsection{It\^o Formula}~

 Let  $M$ be a  complete Riemannian manifold with  Laplace-Beltrami operator $\Delta.$  
Let  $X_t$ be the Brownian motion  generated by  $\Delta/2$   on $M.$ 
We denote by $\mathbb P_o$ the law of $X_t$ starting from  a fixed  reference point $o\in M,$  
 and  denote by $\mathbb E_o$ the   expectation of $X_t$ with respect to $\mathbb P_o.$

 The famous  It\^o formula    (see \cite{bass, NN, Pr}) states that 
 \begin{theorem}[It\^o Formula]
 Let $\phi$ be  a   function of   $\mathscr C^2$-class on $M.$ Then  
 $$\phi(X_t)-\phi(o)=\frac{1}{2}\int_0^t\Delta\phi(X_s)ds+B\left(\int_0^t\|\nabla\phi\|^2(X_s)ds\right), \ \ \ \    \mathbb P_o-a.s.$$
  where $B_t$ is the  standard  Brownian motion on $\mathbb R.$
 \end{theorem}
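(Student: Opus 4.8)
The plan is to reduce the statement to the classical Euclidean It\^o formula applied in local coordinates and then to repackage the martingale part by means of the Dambis--Dubins--Schwarz time--change theorem. Recall that the diffusion $X_t$ with generator $\Delta/2$ is built (for instance by patching solutions of stochastic differential equations over coordinate charts, or via the Eells--Elworthy--Malliavin horizontal lift) so that in any chart $x=(x^1,\dots,x^m)$ the process solves $dX_t^i=\sigma^i_k(X_t)\,dW_t^k+b^i(X_t)\,dt$, where $W$ is a Euclidean Brownian motion, $\sum_k\sigma^i_k\sigma^j_k=g^{ij}$, and $b^j=\tfrac12 G^{-1/2}\partial_i\!\left(\sqrt{G}\,g^{ij}\right)$ with $G=\det(g_{ij})$; this choice is exactly what makes the generator equal $\tfrac12\Delta$. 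First I would fix a chart $U$, let $\tau$ be the first exit time of $X$ from $U$, and apply the ordinary It\^o formula to $\phi\circ X$ on $[0,t\wedge\tau)$. Collecting the finite--variation terms gives $\bigl(\partial_j\phi\,b^j+\tfrac12 g^{ij}\partial_i\partial_j\phi\bigr)(X_s)\,ds=\tfrac12\Delta\phi(X_s)\,ds$, while the quadratic variation of the local martingale part equals $\int \partial_i\phi\,\partial_j\phi\,g^{ij}(X_s)\,ds=\int\|\nabla\phi\|^2(X_s)\,ds$. Both of these expressions are coordinate--independent, which is precisely what makes the local pieces compatible.

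Next I would globalize. Since $M$ is complete and $s\mapsto X_s$ has continuous paths, for $\mathbb{P}_o$-almost every path that does not explode by time $t$ the trajectory $\{X_s:0\le s\le t\}$ lies in a compact set and is therefore covered by finitely many charts; choosing stopping times at which $X$ passes from one chart into the next and summing the chart--by--chart identities yields that
\[
M_t:=\phi(X_t)-\phi(o)-\frac12\int_0^t\Delta\phi(X_s)\,ds
\]
is a continuous local martingale with respect to the filtration of $X$, with quadratic variation $\langle M\rangle_t=\int_0^t\|\nabla\phi\|^2(X_s)\,ds$. (If one does not wish to impose stochastic completeness, this should be read on $[0,\zeta)$ where $\zeta$ is the lifetime of $X$; under the Ricci lower bound used elsewhere in the paper one has $\zeta=\infty$ a.s.\ by Yau's theorem, so no restriction is needed there.)

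Finally I would invoke the Dambis--Dubins--Schwarz theorem: a continuous local martingale $M$ with $M_0=0$ can be written as $M_t=B(\langle M\rangle_t)$ for some standard one--dimensional Brownian motion $B$, defined if necessary on an enlarged probability space (harmless here, since we only evaluate at the finite times $\langle M\rangle_t$). Substituting $\langle M\rangle_t=\int_0^t\|\nabla\phi\|^2(X_s)\,ds$ gives
\[
\phi(X_t)-\phi(o)=\frac12\int_0^t\Delta\phi(X_s)\,ds+B\!\left(\int_0^t\|\nabla\phi\|^2(X_s)\,ds\right),\qquad \mathbb{P}_o\text{-a.s.},
\]
which is the asserted identity.

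The step I expect to be the main obstacle is the globalization: one must check that the coordinate--dependent It\^o decompositions genuinely agree across chart overlaps (they do, because the $\tfrac12\Delta\phi$ drift and the $\|\nabla\phi\|^2$ bracket are intrinsic objects), and one must ensure that the successive chart--crossing times exhaust $[0,t]$ rather than accumulating at some $\zeta<t$. The latter is exactly the question of non--explosion of $X_t$; it is automatic once stochastic completeness is known, and in the setting where this It\^o formula is applied it follows from the lower Ricci bound.
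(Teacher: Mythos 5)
The paper does not prove this statement: it is quoted as a classical result with references to Bass, Ikeda--Watanabe and Privault, so there is no internal argument to compare against. Your proof is the standard one found in those sources --- write the $\Delta/2$-diffusion chart-by-chart as a solution of an SDE with $\sigma\sigma^{T}=(g^{ij})$ and the divergence-form drift, apply the Euclidean It\^o formula, observe that the drift term $\tfrac12\Delta\phi$ and the bracket $\|\nabla\phi\|^{2}$ are intrinsic so the local decompositions patch across charts, and finish with Dambis--Dubins--Schwarz --- and it is correct. Your computation of the drift $b^{j}=\tfrac12 G^{-1/2}\partial_i(\sqrt{G}\,g^{ij})$ does reproduce the first-order part of $\tfrac12\Delta$ as defined in the paper, and the quadratic variation matches the paper's definition of $\|\nabla\phi\|^{2}$.

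Two minor remarks. First, you are right that, as literally stated for an arbitrary complete manifold and arbitrary $\mathscr C^{2}$ function, the identity should be read on $[0,\zeta)$ with $\zeta$ the lifetime; the paper silently assumes non-explosion here, and in all of its applications the Ricci lower bound (hence stochastic completeness) is in force, so your caveat is the honest way to state the result. Second, the possible enlargement of the probability space in Dambis--Dubins--Schwarz when $\langle M\rangle_{\infty}<\infty$ is indeed harmless for the paper's use of the formula, since only the expectation of the time-changed martingale at a stopping time is ever needed (and that is taken care of separately in the Dynkin formula corollary). No gaps.
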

Because    Brownian motion is a continuous martingale,  we have   
$$\mathbb E_o\left[B\left(\int_0^t\|\nabla\phi\|^2(X_s)ds\right)\right]=B_0=0.$$
Hence,  it yields from  It\^o formula  that 
   \begin{cor}[Dynkin Formula]\label{dynkin}
 Let $\phi$ be a  function of $\mathscr C^2$-class on $M.$ Then  
    $$\mathbb E_o\left[\phi(X_T)\right]-\phi(o)=\frac{1}{2}\mathbb E_o\left[\int_0^T\Delta\phi(X_t)dt\right]$$
    for  a finite  stopping time $T$   such  that each integrand above is  integrable. 
     \end{cor}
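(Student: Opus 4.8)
The plan is to obtain the Dynkin formula from the It\^o formula by the standard route: first establish the identity for \emph{bounded} stopping times, where the martingale term drops out by optional stopping, and then remove the boundedness by a limiting argument for which the integrability hypothesis is exactly what is needed.

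First I would record the It\^o formula as a pathwise identity of continuous processes,
$$\phi(X_t)-\phi(o)=\frac12\int_0^t\Delta\phi(X_s)\,ds+N_t,\qquad N_t:=B\left(\int_0^t\|\nabla\phi\|^2(X_s)\,ds\right),$$
valid $\mathbb P_o$-a.s.\ simultaneously for all $t\geq0.$ Since $N_t$ is a time-changed Brownian motion it is a continuous local martingale with $N_0=0;$ choose a localizing sequence of stopping times $\sigma_k\uparrow\infty$ a.s.\ such that each stopped process $N_{t\wedge\sigma_k}$ is a genuine $\mathbb P_o$-martingale. For fixed $n\in\mathbb N$ the random time $T\wedge\sigma_k\wedge n$ is a bounded stopping time, so substituting $t=T\wedge\sigma_k\wedge n$ into the identity, taking $\mathbb E_o,$ and applying the optional stopping theorem to $N_{\,\cdot\,\wedge\sigma_k}$ (which gives $\mathbb E_o[N_{T\wedge\sigma_k\wedge n}]=0$) yields
$$\mathbb E_o\big[\phi(X_{T\wedge\sigma_k\wedge n})\big]-\phi(o)=\frac12\,\mathbb E_o\left[\int_0^{T\wedge\sigma_k\wedge n}\Delta\phi(X_s)\,ds\right].$$

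Next I would pass to the limit in two stages. Letting $k\to\infty$ uses $T\wedge\sigma_k\wedge n\uparrow T\wedge n$ and path continuity of $s\mapsto\phi(X_s);$ the right-hand integrand is dominated by the integrable quantity $\int_0^T|\Delta\phi(X_s)|\,ds,$ and an interchange of limit and expectation on the left (legitimate by the integrability hypothesis, see below) gives the same identity with $T\wedge n$ in place of $T\wedge\sigma_k\wedge n.$ Then letting $n\to\infty$ uses the finiteness of $T$ (so $T\wedge n\uparrow T$), path continuity (so $\phi(X_{T\wedge n})\to\phi(X_T)$ a.s.), and the standing assumption that $\phi(X_T)$ and $\int_0^T|\Delta\phi(X_s)|\,ds$ are integrable, so that dominated convergence on both sides yields
$$\mathbb E_o\big[\phi(X_T)\big]-\phi(o)=\frac12\,\mathbb E_o\left[\int_0^T\Delta\phi(X_t)\,dt\right],$$
which is the assertion.

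The main obstacle is precisely this passage to the limit on the left-hand side. Since It\^o's formula is already an almost-sure pathwise identity, the only genuine work lies in showing that expectations survive the limits $k\to\infty$ and $n\to\infty$: the $\Delta\phi$-integral term is controlled cleanly by integrability of $\int_0^T|\Delta\phi(X_s)|\,ds,$ and the martingale term by localization plus optional stopping, but the delicate point is the \emph{uniform integrability} of the family $\{\phi(X_{T\wedge n})\}_n$ of stopped evaluations. This is exactly what the integrability hypothesis in the statement is designed to furnish --- trivially when $\phi$ is bounded (e.g.\ $T$ the exit time from a precompact domain on whose closure $\phi$ is continuous), or via an integrable majorant for $\sup_{0\le s\le T}|\phi(X_s)|,$ or by rewriting $\phi(X_{T\wedge n})=\phi(o)+\frac12\int_0^{T\wedge n}\Delta\phi(X_s)\,ds+N_{T\wedge n}$ and arguing uniform integrability term by term.
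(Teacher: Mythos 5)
Your proposal is correct and follows essentially the same route as the paper: the paper derives the Dynkin formula from the It\^o formula by observing that the time-changed Brownian motion term is a continuous martingale whose expectation vanishes, then substitutes the stopping time $T$ under the stated integrability hypothesis. You simply supply the standard localization, optional-stopping, and uniform-integrability details that the paper's one-line argument leaves implicit, which is a faithful (and more careful) rendering of the same idea.
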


  Dynkin formula plays an   important role in the study of Nevanlinna theory on complex manifolds.       Let $\Omega$ be  a    bounded domain   containing $o$ with  smooth boundary $\partial \Omega$ in $M.$ 
Denote by   $g_\Omega(x,y)$    the Green function of $\Delta/2$ for  $\Omega$ with  a pole at $o$ satisfying Dirichlet boundary condition, 
         and    by 
        $\pi_{\partial \Omega}$    the harmonic measure on $\partial \Omega$ with respect to $o.$ Note that 
      $$d\pi_{\partial\Omega}=\frac{1}{2}\frac{\partial g_\Omega(o,x)}{\partial{\vec{\nu}}}d\sigma_{\partial\Omega},$$
  where  $\partial/\partial \vec\nu$ is  the inward  normal derivative on $\partial \Omega(r),$  $d\sigma_{\partial\Omega}$ is the Riemannian area element  of 
$\partial \Omega(r).$
    
Historically, the first theorem    indicating   the  connection 
 between Brownian motion and potential theory was obtained   by S. Kakutani \cite{Ka} in 1944.  
 The  further   relation  was  investigated   
  by J. Doob \cite{Do1,Do2}, G.  Hunt \cite{Hu},  
 A. Knapp \cite{Kn}, N. Privault \cite{Pr} and Port-Stone \cite{PS}, etc. 
In the following, we  introduce two  classical  formulas   relating  closely  to  Nevanlinna theory. 

Let $\phi$ be a  $\mathscr C^2$-function   outside  a  possible polar set of singularities at most on $M.$ Set   the  first existing  time for $\Omega$
$$\tau_{\Omega}=\inf\big\{t>0:  X_t\not\in \Omega\big\}.$$
  The co-area formula reads  (see  \cite{at1,at2, bass})
 \begin{equation}\label{coa}
  \int_{\Omega}g_{\Omega}(o,x)\Delta\phi dv= \mathbb{E}_o\left[\int_0^{\tau_{\Omega}}\Delta\phi(X_t)dt\right],
 \end{equation}
 where $dv$ is the Riemannian volume element of $M.$ 
The mean-value integral  has the following  probabilistic expression  (see \cite{bass}, Proposition 2.8;  and \cite{at1,at2}) 
\begin{equation}\label{mean}
 \int_{\partial \Omega}\phi d\pi_{\partial \Omega}= \mathbb{E}_o\left[\phi(X_{\tau_{\Omega}})\right]. 
\end{equation}
Set  $T=\tau_{\Omega}$  and apply     (\ref{coa}) with (\ref{mean}) to Corollary \ref{dynkin},   the Dynkin formula also holds  in   the  viewpoint  of distributions since the possible set of singularities of $\phi$ is polar. Hence, we have: 
     \begin{cor}[Jensen-Dynkin Formula]\label{J-D}
 Let $\phi$ be a  $\mathscr C^2$-function   outside  a  possible polar set of singularities at most on $M.$  Assume that $\phi(o)\not=\infty.$  Then  
    $$\int_{\partial \Omega}\phi d\pi_{\partial \Omega}-\phi(o)=\frac{1}{2}\int_{\Omega}g_{\Omega}(o,x)\Delta\phi dv.$$
 \end{cor}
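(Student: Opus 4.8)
The plan is to obtain the formula by specializing Dynkin's formula (Corollary \ref{dynkin}) to the first exit time $T=\tau_{\Omega}$ of the Brownian motion $X_t$ from $\Omega$, and then rewriting both sides of the resulting identity via the probabilistic representations recorded in $(\ref{coa})$ and $(\ref{mean})$. Since the classical identity $d\pi_{\partial\Omega}=\tfrac12\frac{\partial g_\Omega(o,x)}{\partial\vec\nu}d\sigma_{\partial\Omega}$ has already been recalled just before the statement, no further potential-theoretic input is needed; the whole proof amounts to checking that Dynkin's formula is applicable with $T=\tau_\Omega$ and that the polar singularities of $\phi$ are harmless.

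First I would treat the case $\phi\in\mathscr C^2(M)$. The point is that $\tau_\Omega$ is an admissible stopping time in Corollary \ref{dynkin}: because $\Omega$ is a bounded domain with smooth boundary, $X_t$ exits $\Omega$ in finite time $\mathbb P_o$-almost surely, and in fact $\mathbb E_o[\tau_\Omega]<\infty$ (for instance, comparing $\tau_\Omega$ with the exit time of a large geodesic ball containing $\overline\Omega$, or using $\mathbb E_o[\tau_\Omega]\simeq\int_\Omega g_\Omega(o,y)\,dv(y)<\infty$). Since $\phi,\nabla\phi,\Delta\phi$ are bounded on the compact set $\overline\Omega$, all integrands appearing in the It\^o and Dynkin formulas are integrable, so Corollary \ref{dynkin} yields
\[
\mathbb E_o\big[\phi(X_{\tau_\Omega})\big]-\phi(o)=\frac12\,\mathbb E_o\!\left[\int_0^{\tau_\Omega}\Delta\phi(X_t)\,dt\right].
\]
Substituting $(\ref{mean})$ into the left-hand side and $(\ref{coa})$ into the right-hand side gives the asserted equality in the smooth case.

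Next I would relax the regularity hypothesis, allowing $\phi$ to be $\mathscr C^2$ only off a polar set $S$ of singularities, with $o\notin S$ (consistent with $\phi(o)\neq\infty$). The key analytic fact is that a polar set carries zero harmonic measure and is, $\mathbb P_o$-almost surely, avoided by a Brownian path issued from a point off $S$; hence $X_t$ remains in the region where $\phi$ is smooth up to time $\tau_\Omega$, and since $S$ is Lebesgue-null one has $\int_\Omega g_\Omega(o,x)\Delta\phi\,dv=\int_{\Omega\setminus S}g_\Omega(o,x)\Delta\phi\,dv$. I would then exhaust $\Omega\setminus S$ by precompact subdomains $\Omega_\epsilon$ obtained by deleting shrinking neighborhoods of $S$, apply the smooth case on each $\Omega_\epsilon$, and let $\epsilon\to0$: the boundary term converges because $\tau_{\Omega_\epsilon}\uparrow\tau_\Omega$ together with the avoidance of $S$, and the volume term converges by dominated convergence once $g_\Omega(o,\cdot)\Delta\phi\in L^1(\Omega)$. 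This is exactly what is meant by the Dynkin formula holding "in the viewpoint of distributions."

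The main obstacle is precisely this last step: one must ensure that the limit procedure introduces no additional singular contribution concentrated on $S$ on the right-hand side, and that $\phi(X_{\tau_\Omega})$ and the relevant occupation-time integrals stay integrable in the limit. This rests on a uniform domination bound near $S$ — generally available in the intended applications to Nevanlinna theory, where $\phi$ arises as the logarithm of a nonnegative quantity, so it is one-sidedly bounded near $S$ and $\Delta\phi$ is locally integrable against the Green kernel — together with the fact that polar singular sets contribute neither to Lebesgue integrals nor to the Brownian occupation measure. Granting this, the statement follows formally from the chain of substitutions described above.
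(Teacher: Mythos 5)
Your proposal follows exactly the paper's route: set $T=\tau_\Omega$ in Dynkin's formula (Corollary \ref{dynkin}) and substitute the probabilistic identities $(\ref{coa})$ and $(\ref{mean})$, then dispose of the singularities because the singular set is polar. The paper states this in one sentence and leaves the polar-set limiting argument implicit, so your more careful treatment of that step is a faithful (and slightly more detailed) version of the same proof.
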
  

When $M=\mathbb C^m,$  the Green function and harmonic measure for  balls  have explicit expressions. 
By a direct computation, we will see that Green-Jensen formula (see \cite{No, ru}) follows from Jensen-Dynkin formula.  
 
 \subsection{Complete K\"ahler Connected Sums}~

 We first  give  the notion of K\"ahler connected sums as follows.
 
 \begin{defi}\label{defcc}
Let $M$ be a   K\"ahler  manifold. We say that $M$ is  a  K\"ahler connected sum with $\vartheta$ ends,  if  
    there exist  $\vartheta$   K\"ahler manifolds $M_1,\cdots, M_\vartheta$ 
 such that 
      $M=M_1\#\cdots\#M_\vartheta.$ In addition, we say that $M$ is complete if all $M_j^,s$ are complete. 
      \end{defi}
       
        Let 
     $$M=\left(K; \{M_j\}_{j=1}^\vartheta; \{E_j\}_{j=1}^\vartheta\right)$$
 be  a complete K\"ahler connected sum with $\vartheta$ non-parabolic ends (by which we mean that all $M_j^,s$ are non-parabolic),  where  all $M_j^{,}s$ are  assumed to be non-compact.  
As noted   in Section 2.5,   $M$ is  also non-parabolic.

 Let $\Delta$ denote    the  Laplace-Beltrami operator  on $M.$ 
We  consider the heat kernel $p(t, x, y)$ of $M,$  which is the minimal positive   fundamental solution to the  heat equation
$$\left(\Delta-\frac{\partial}{\partial t}\right)u(t,x)=0.$$
\ \ \ \   We  first  review  some notations.  Fix a  reference point $o$ in the interior of $K.$  Denote by $B(r)$   the geodesic ball  centered at $o$ with radius $r$ in $M,$ and by $d(x,y)$ the Riemannian distance between $x,y\in M.$
     Set 
\begin{equation}\label{ref0}
V_{\rm min}(r)=\min_{1\leq j\leq\vartheta}V_j(r), \ \ \ \   V_{\rm max}(r)=\max_{1\leq j\leq\vartheta}V_j(r)
\end{equation}
with 
$$V_j(r)={\rm{Vol}}\left(B(r)\cap (K\cup E_j)\right), \ \ \ \  j=1,\cdots,\vartheta.$$
Put  
$$\rho(x)=d(o,x), \ \ \ \    |x|=\sup_{y\in K}d(x,y).$$
 Again, define the function $H(x, t)$ on $M\times(0, \infty)$ as follows
 \begin{equation}\label{H}
H(x, t)=\min\left\{1, \  \frac{|x|^2}{V_{i_x}(|x|)}+\left(\int_{|x|^2}^t\frac{ds}{V_{i_x}(\sqrt{s})}\right)^+\right\},
\end{equation}
where 
  $$ i_x=\begin{cases}
j, \ &  x\in E_j \ \text{for  some} \ j;  \\
0, \ &  x\in K.
\end{cases}$$

   We  give    two-sided bounds   of $p(t, o, x)$  using   Theorems \ref{upper} and \ref{lower}.
Since $o\in K,$   it follows from (\ref{min}) and  definitions of $d_\emptyset(x,y)$ and $d_+(x,y)$ that  
\begin{equation}\label{ref1}
d_\emptyset(o,x)=\infty, \ \ \ \    d_+(o,x)=\rho(x). 
\end{equation}
Notice  that $K\subset M$ is a compact subset with nonempty  interior,  we obtain  $0<|o|<\infty.$ By $i_o=0$ and (\ref{H}),  it is not difficult  to derive  
\begin{equation}\label{ref2}
0<\gamma_0:=\min\left\{1,\  \frac{|o|^2}{V_{\rm max}(|o|)}\right\}\leq H(o,t)\leq1, \ \ \ \  0<H(x, t)\leq1.
\end{equation}
Set
$$W(x,t)=\frac{H(o,t)H(x,t)}{V_{\min}(\sqrt{t})}+\frac{H(o,t)}{V_{i_x}(\sqrt{t})}+\frac{H(x,t)}{V_{i_o}(\sqrt{t})}.$$
Then,  (\ref{ref0}) and (\ref{ref2}) yield  that 
\begin{equation}\label{ref3}
\frac{\gamma_0}{V_{\rm max}(\sqrt{t})}\leq W(x,t)\leq\frac{3}{V_{\rm min}(\sqrt{t})}. 
\end{equation}
Note that $(PH)$ implies $(FH)$ and the non-parabolicity of an end $M_j$ implies the non-parabolicity of $M.$ Combining (\ref{ref1}) with (\ref{ref3}) and utilizing  Theorems \ref{upper} and \ref{lower}, we obtain: 
\begin{cor}\label{estimate}
Let $M=(K; \{M_j\}_{j=1}^\vartheta; \{E_j\}_{j=1}^\vartheta)$ be a  complete K\"ahler connected sum with  $\vartheta$ non-parabolic  ends,  where all  $M_j^,s$ are non-compact manifolds   
 satisfying  $(PH)$ in Property $\ref{ap2}.$  
 Then,  there exist constants $A, B, a, b>0$ such that
\begin{equation*}
\frac{A}{V_{\rm max}(\sqrt{t})}\exp\left(-\frac{\rho(x)^2}{at}\right) \leq p(t,o,x) \leq \frac{B}{V_{\rm min}(\sqrt{t})}\exp\left(-\frac{\rho(x)^2}{bt}\right)
\end{equation*}
holds  for all $x\in M$ and all $t>0.$ 
\end{cor}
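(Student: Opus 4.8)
The plan is to derive the two-sided bound for $p(t,o,x)$ directly from Theorems \ref{upper} and \ref{lower} by specializing $y=o$ and absorbing every "extra" factor into constants. First I would record the geometric simplifications that hold because $o$ lies in the interior of the compact central part $K$. Since $o\in K$ we have $i_o=0$; equation (\ref{min}) together with the definitions of $d_\emptyset$ and $d_+$ gives $d_\emptyset(o,x)=\infty$ and $d_+(o,x)=\rho(x)$, as stated in (\ref{ref1}). Consequently the last term in both the upper bound (Theorem \ref{upper}) and the lower bound (Theorem \ref{lower}) vanishes, since $\exp(-d_\emptyset(o,x)^2/(c t))=0$, and we are left only with the "$d_+$" term $W(x,t)\exp(-\rho(x)^2/(ct))$ on each side, where
$$
W(x,t)=\frac{H(o,t)H(x,t)}{V_{\min}(\sqrt t)}+\frac{H(o,t)}{V_{i_x}(\sqrt t)}+\frac{H(x,t)}{V_{i_o}(\sqrt t)}.
$$

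Next I would bound $W(x,t)$ both above and below by a multiple of $1/V_{\min}(\sqrt t)$ and $1/V_{\max}(\sqrt t)$ respectively. The key input is (\ref{ref2}): because $K$ is compact with nonempty interior, $0<|o|<\infty$, so $H(o,t)$ is bounded below by the positive constant $\gamma_0=\min\{1,|o|^2/V_{\max}(|o|)\}$ and above by $1$; and $0<H(x,t)\le 1$ for all $x$. Since $V_{i_x}(\sqrt t)\ge V_{\min}(\sqrt t)$ and $V_{i_o}(\sqrt t)=V_0(o,\sqrt t)$ is comparable to $V_{\min}(\sqrt t)$ up to the compact correction from $K$, each of the three summands in $W(x,t)$ is at most $1/V_{\min}(\sqrt t)$, giving the upper bound $W(x,t)\le 3/V_{\min}(\sqrt t)$ in (\ref{ref3}); and the middle summand alone is at least $\gamma_0/V_{i_x}(\sqrt t)\ge \gamma_0/V_{\max}(\sqrt t)$, giving the lower bound in (\ref{ref3}). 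Feeding these into Theorems \ref{upper} and \ref{lower} (noting, as remarked after Theorem \ref{lower}, that $(PH)$ implies $(FK)$ and that non-parabolicity of some end implies non-parabolicity of $M$, so both theorems are applicable under the present hypotheses) yields constants $A=C_1\gamma_0$, $B=3C_2$, $a=c_1$, $b=c_2$ and the claimed inequality
$$
\frac{A}{V_{\max}(\sqrt t)}\exp\!\left(-\frac{\rho(x)^2}{at}\right)\le p(t,o,x)\le \frac{B}{V_{\min}(\sqrt t)}\exp\!\left(-\frac{\rho(x)^2}{bt}\right).
$$

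I do not expect any serious obstacle here — this is essentially a specialization-and-bookkeeping argument. The one point that needs a little care is the comparison $V_{i_o}(\sqrt t)\simeq V_{\min}(\sqrt t)$ (equivalently, controlling $V_j(o,r)$ versus $V_j(r)=V_j(o,r)$, which is actually an identity here by the definitions in Section 3.2, so this is immediate) and making sure that when $x\in K$ the quantities $V_{i_x}$, $|x|$, $H(x,t)$ are all bounded away from $0$ and $\infty$ uniformly, which again follows from compactness of $K$ exactly as for the point $o$. Another minor point is verifying that Theorem \ref{upper}'s hypothesis (non-parabolicity of $M$ plus $(FK)$ on each end) is met: $(PH)\Rightarrow(FK)$ by Theorem \ref{equi2}, and non-parabolicity of any single end forces non-parabolicity of $M$ by Proposition 14.1 of \cite{Gri1}, as already noted in the text. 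With those observations in place, the estimate follows by direct substitution.
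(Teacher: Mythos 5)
Your proposal is correct and follows essentially the same route as the paper: specialize Theorems \ref{upper} and \ref{lower} at $y=o$, use $d_\emptyset(o,x)=\infty$ and $d_+(o,x)=\rho(x)$ to kill the off-$K$ term, and sandwich $W(x,t)$ between $\gamma_0/V_{\max}(\sqrt t)$ and $3/V_{\min}(\sqrt t)$ via the bounds $\gamma_0\le H(o,t)\le 1$, $0<H(x,t)\le 1$ and $V_{\min}\le V_{i_x}\le V_{\max}$. The hypothesis check ($(PH)\Rightarrow(FK)$ and non-parabolicity of an end implying that of $M$) is also exactly the paper's.
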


 \subsection{Nevanlinna's Functions via Heat Kernel}~

Let
  $$M=M_1\#\cdots\#M_\vartheta$$
 be  a complete K\"ahler connected sum with $\vartheta$ non-parabolic ends, in which  all  $M_j^{,}s$ are   non-compact manifolds  satisfying   $(PH)$  in Property \ref{ap2}, of complex dimension $m.$ 
  The  K\"ahler form of $M$  associated to  K\"ahler metric $g=(g_{i\bar j})$      is defined  by 
 $$\alpha=\frac{\sqrt{-1}}{\pi}\sum_{i,j=1}^mg_{i\bar j}dz_i\wedge d\bar z_{j}$$
 in  holomorphic local  coordinates $z_1,\cdots,z_m.$ 
We   adopt    the same notations as defined in Section 3.2.
  By Corollary \ref{estimate}, there exist  constants $A, B, a, b>0$ such that
\begin{equation}\label{est}
\frac{A}{V_{\rm max}(\sqrt{t})}\exp\left(-\frac{\rho(x)^2}{at}\right) \leq p(t,o,x) \leq \frac{B}{V_{\rm min}(\sqrt{t})}\exp\left(-\frac{\rho(x)^2}{bt}\right)
\end{equation}
holds  for all $x\in M$ and all $t>0.$  By the  non-parabolicity of  $M,$   the  following infinite integral 
 $$G(o,x):=2\int_0^\infty p(t,o,x)dt$$
   converges  for $x\not=o,$ which thus  defines   the minimal positive Green function of $\Delta/2$ for $M$ with a pole at $o.$  The   non-parabolicity of all $M_j^,s$  imply  that 
$$\int_0^\infty\frac{e^{-\frac{\rho(x)^2}{at}}}{V_{\rm max}(\sqrt{t})}dt, \ \ \ \   \int_0^\infty\frac{e^{-\frac{\rho(x)^2}{bt}}}{V_{\rm min}(\sqrt{t})}dt$$
 are convergent   for $x\not=o$  due to Theorem \ref{xoxo}.  Moreover,  each of them  tends to $0$ as $x\to\infty$ (where $``x\to\infty"$ means that $\rho(x)\to\infty$) and tends to $\infty$ as $x\to  o.$
  It follows from Corollary \ref{estimate} that 
 \begin{equation}\label{green}
2A\int_0^\infty\frac{e^{-\frac{\rho(x)^2}{at}}}{V_{\rm max}(\sqrt{t})}dt \leq G(o,x) \leq 2B\int_0^\infty\frac{e^{-\frac{\rho(x)^2}{bt}}}{V_{\rm min}(\sqrt{t})}dt
\end{equation}
holds  for all $x\in M.$ 
 
 To define Nevanlinna's functions,  we  need      to construct a family $\{\Delta(r)\}_{r>0}$ of   precompact  domains  containing
  $o,$ being able to    exhaust $M.$ One workable way is to  utilize  the heat kernel $p(t,o,x),$   because  it is  effective    for   
  estimating  the  Green function   for $\Delta(r)$ (with a pole at $o$ satisfying Dirichelet boundary condition). 
 For $r>0,$ define
  $$\Delta(r)=\left\{x\in M: \ G(o,x)> 2A\int_0^\infty\frac{e^{-\frac{r^2}{at}}}{V_{\rm max}(\sqrt{t})}dt\right\}.$$
  According to  
  $$\int_0^\infty\frac{e^{-\frac{r^2}{At}}}{V_{\rm max}(\sqrt{t})}dt\searrow0, \ \ \ \ r\to\infty$$
and   
$$\lim_{x\to o}G(o,x)=\infty, \ \ \ \     \lim_{x\to\infty}G(o,x)=0,$$
  we deduce  that  $\Delta(r)$ is a precompact  domain containing $o$ in $M$ satisfying  
 $$ \ \ \   \lim_{r\to0}\Delta(r)\to \emptyset, \ \ \ \   \lim_{r\to\infty}\Delta(r)=M.$$
 In fact,  the family 
     $\{\Delta(r)\}_{r>0}$ exhausts $M.$ That is,  for any   sequence $\{r_n\}_{n=1}^\infty$ such that    $0<r_1<r_2<\cdots\to \infty,$ we have  
 $$\bigcup_{n=1}^\infty\Delta(r_n)=M$$
 and
$$\emptyset\not=\Delta(r_1)\subset\overline{\Delta(r_1)}\subset\Delta(r_2)\subset\overline{\Delta(r_2)}\subset\cdots.$$  
  So, the boundary of $\Delta(r)$ can be formulated as   
  $$\partial\Delta(r)=\left\{x\in M: \ G(o,x)=2A\int_0^\infty\frac{e^{-\frac{r^2}{at}}}{V_{\rm max}(\sqrt{t})}dt\right\}. \ $$
   By Sard's theorem,  each connected component of  $\partial\Delta(r)$ is a submanifold of $M$ for almost all $r>0.$
    Define 
 $$g_r(o,x)=G(o,x)-2A\int_0^\infty\frac{e^{-\frac{r^2}{at}}}{V_{\rm max}(\sqrt{t})}dt.$$
 Then, we see that  $g_r(o,x)$ defines   the Green function of $\Delta/2$ for $\Delta(r)$ with a pole at $o$ satisfying Dirichelet boundary condition. 
 Let  $\pi_r$  be the harmonic measure  on $\partial\Delta(r)$ with respect to $o,$ i.e., 
  $$d\pi_r=\frac{1}{2}\frac{\partial g_r(o,x)}{\partial{\vec{\nu}}}d\sigma_r,$$
  where  $\partial/\partial \vec\nu$ is the inward  normal derivative on $\partial \Delta(r),$ $d\sigma_{r}$ is the Riemannian area element  of 
$\partial \Delta(r).$

   Next, we  define  the Nevanlinna's functions. 
   Let $X$ be a complex projective manifold, over which we can  put  
     a Hermitian positive  line bundle $(L, h)$
    with  Chern form  $c_1(L,h):=-dd^c\log h>0,$ where 
$$d=\partial+\overline{\partial}, \ \ \ \    d^c=\frac{\sqrt{-1}}{4\pi}(\overline{\partial}-\partial)$$
so that 
$$dd^c=\frac{\sqrt{-1}}{2\pi}\partial\overline{\partial}.$$
\ \ \ \  Let $f: M\to X$ be a meromorphic mapping. The \emph{characteristic function} of $f$ with respect to $L$ is defined by 
    \begin{eqnarray*}
 T_f(r, L) &=& \frac{\pi^m}{(m-1)!}\int_{\Delta(r)}g_r(o,x)f^*c_1(L,h)\wedge\alpha^{m-1}  \\
 &=&    -\frac{1}{4}\int_{\Delta(r)}g_r(o,x)\Delta\log(h\circ f)dv, 
     \end{eqnarray*}
   where    $dv$ is the  Riemannian volume element  of $M.$ 

  Let $s_D$ be the  canonical section  associated to $D\in|L|.$  That is, $s_D$ is the holomorphic section of $L$ over $X$ with zero divisor $D.$ 
The \emph{proximity function} of $f$ with respect to $D$ is defined  by
 $$m_f(r,D)=\int_{\partial\Delta(r)}\log\frac{1}{\|s_D\circ f\|}d\pi_r.$$
 Meanwhile,   we define the  \emph{counting function}  and   \emph{simple counting function} of $f$ with respect to $D$ respectively  by 
   \begin{eqnarray*}
N_f(r,D)&=& \frac{\pi^m}{(m-1)!}\int_{f^*D\cap\Delta(r)}g_r(o,x)\alpha^{m-1}, \\
\overline{N}_f(r, D)&=& \frac{\pi^m}{(m-1)!}\int_{f^{-1}(D)\cap\Delta(r)}g_r(o,x)\alpha^{m-1}. 
 \end{eqnarray*}
 The characteristic function of  Ricci form  $\mathscr R=-dd^c\log\det(g_{i\bar j})$  is defined by
     \begin{eqnarray*}
 T(r,\mathscr R)&=& \frac{\pi^m}{(m-1)!}\int_{\Delta(r)}g_r(o,x)\mathscr R\wedge\alpha^{m-1} \\
 &=&  -\frac{1}{4}\int_{\Delta(r)}g_r(o,x)\Delta\log\det(g_{i\bar j})dv. 
     \end{eqnarray*}
 \ \ \ \    When $M=\mathbb C^m$ 
    \begin{equation}\label{t4}
 p(t, \textbf{0},z)=\frac{1}{(4\pi t)^{m}}e^{-\frac{\|z\|^2}{4t}}.
     \end{equation}
  We consider the non-parabolic case (i.e., $m\geq2$). By definition   
 \begin{equation*}\label{f6}
 G(\textbf{0}, z)=2\int_0^\infty\frac{e^{-\frac{\|z\|^2}{4t}}}{(4\pi t)^{m}}dt,
  \end{equation*}          
  which gives  that 
        \begin{eqnarray}\label{g4}
       \Delta(r)&=& \left\{x\in \mathbb C^m: \ G(\textbf{0},z)> 2\int_0^\infty \frac{e^{-\frac{r^2}{4t}}}{(4\pi t)^{m}}dt\right\}  \\
       &=& \big\{z\in\mathbb C^m:   \|z\|<r\big\}  
     := \mathbb B(r) \nonumber
          \end{eqnarray}
and   
        \begin{eqnarray*}\label{g5}
 g_r(\textbf{0}, z) 
 &=& 2\int_0^\infty\frac{e^{-\frac{\|z\|^2}{4t}}}{(4\pi t)^{m}}dt-2\int_0^\infty \frac{e^{-\frac{r^2}{4t}}}{(4\pi t)^{m}}dt \\
 &=& \frac{\|z\|^{2-2m}-r^{2-2m}}{(m-1)\omega_{2m-1}}, 
          \end{eqnarray*}
where $\omega_{2m-1}$ is the area of  the unit sphere in $\mathbb C^m.$ In further, we derive    
 $$d\pi_r(z)=d^c\log\|z\|^2\wedge \left(dd^c\log\|z\|^2\right)^{m-1}.$$
By integration-by-parts formula, we will see  that our Nevanlinna's functions agree with the classical ones. 
Notice  the connection  between potential theory and Brownian motion, 
  Nevanlinna's functions have  alternative probabilistic expressions  as  follows. 

Let $\mathbb P_o$ be the law of Brownian motion  $X_t$ on $M$ generated by $\Delta/2$ starting  from $o.$ The  corresponding  expectation of $X_t$ is denoted by  $\mathbb E_o.$   Set the first existing time for $\Delta(r)$  
 $$\tau_r=\inf\big\{t>0:  X_t\not\in \Delta(r)\big\}.$$
By co-area formula (see (\ref{coa})), we obtain   
     \begin{eqnarray*}
T_f(r,L)& &=-\frac{1}{4}\mathbb E_o\left[\int_0^{\tau_r}\Delta\log(h\circ f(X_t))dt\right], \\
T(r,\mathscr R)& &=-\frac{1}{4}\mathbb E_o\left[\int_0^{\tau_r}\Delta\log\det(g_{i\bar j}(X_t))dt\right].
     \end{eqnarray*}
Also,  (\ref{mean}) yields  that 
$$  m_f(r,D) =\mathbb E_o\left[\log\frac{1}{\|s_D\circ f(X_{\tau_r})\|}\right].$$
 Moreover, we  have (see \cite{at1, at2, carne, Dong1})
 $$ N_f(r,D)=\lim_{\lambda\rightarrow\infty}\lambda\mathbb P_o\left(\sup_{0\leq t\leq\tau_r}\log\frac{1}{\|s_D\circ f(X_t)\|}>\lambda\right).$$
However, there exists  no  such an analytic  expression for $\overline{N}_f(r, D)$ in general.  
Using  Poincar\'e-Lelong formula (see \cite{Dem, No, ru}) and Jensen-Dynkin formula (see Corollary \ref{J-D}), it is immediate to  derive    the following first main theorem. 
\begin{theorem}[\cite{Dong1}]  Assume   that $f(o)\not\in{\rm{Supp}}D.$ Then
$$T_f(r,L)+\log\frac{1}{\|s_D\circ f(o)\|}=m_f(r,D)+N_f(r,D).$$
\end{theorem}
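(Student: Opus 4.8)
The plan is to obtain this first main theorem by applying the Jensen-Dynkin formula (Corollary~\ref{J-D}) to the function $\phi=\log\|s_D\circ f\|$ on the precompact domain $\Delta(r)$, together with the Poincar\'e-Lelong formula. First I would note that, since $s_D$ is a holomorphic section of $L$ with zero divisor $D$ and $f$ is meromorphic, $\phi=\log\|s_D\circ f\|$ is of $\mathscr C^2$-class off the analytic set consisting of $f^{-1}(D)$ together with the indeterminacy (and pole) locus of $f$; this exceptional set is polar, and the hypothesis $f(o)\not\in{\rm{Supp}}D$ ensures $\phi(o)\not=\infty$. Hence Corollary~\ref{J-D} applies (in the distributional sense, as remarked there) and gives
$$\int_{\partial\Delta(r)}\log\|s_D\circ f\|\,d\pi_r-\log\|s_D\circ f(o)\|=\frac12\int_{\Delta(r)}g_r(o,x)\,\Delta\log\|s_D\circ f\|\,dv.$$

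Next I would convert the right-hand side into intersection-theoretic quantities. Writing $\log\|s_D\circ f\|=\tfrac12\log\|s_D\circ f\|^2$ and using the K\"ahler identity $\tfrac{\pi^m}{(m-1)!}\,dd^c\psi\wedge\alpha^{m-1}=\tfrac14\,\Delta\psi\,dv$ (the normalization already built into the two displayed expressions for $T_f(r,L)$), the Poincar\'e-Lelong formula $dd^c\log\|s_D\circ f\|^2=f^*D-f^*c_1(L,h)$ yields
$$\frac14\,\Delta\log\|s_D\circ f\|^2\,dv=\frac{\pi^m}{(m-1)!}\bigl(f^*D-f^*c_1(L,h)\bigr)\wedge\alpha^{m-1}.$$
Multiplying by $g_r(o,x)$ and integrating over $\Delta(r)$, the first summand on the right is $N_f(r,D)$ and the second is $-T_f(r,L)$ by definition, so the right-hand side of the Jensen-Dynkin identity equals $N_f(r,D)-T_f(r,L)$. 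Substituting back, moving $T_f(r,L)$ to the left and rewriting $-\int_{\partial\Delta(r)}\log\|s_D\circ f\|\,d\pi_r=\int_{\partial\Delta(r)}\log\tfrac{1}{\|s_D\circ f\|}\,d\pi_r=m_f(r,D)$, one arrives at $T_f(r,L)+\log\tfrac{1}{\|s_D\circ f(o)\|}=m_f(r,D)+N_f(r,D)$.

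I expect the main obstacle to be the rigorous justification of Corollary~\ref{J-D} for the singular function $\phi$: one must check that $\phi\in L^1_{\mathrm{loc}}$, that its distributional Laplacian is exactly the smooth contribution $-f^*c_1(L,h)$ (wedged appropriately with $\alpha^{m-1}$) plus the current of integration along $f^*D$, with no residual mass concentrated on the polar set, and that the boundary integral against the harmonic measure $\pi_r$ is finite. All of this is precisely the content of the Poincar\'e-Lelong formula combined with the fact that polar sets are negligible both for $dv$ and, for almost every $r>0$ (by Sard's theorem the hypersurface $\partial\Delta(r)$ is smooth and meets the exceptional analytic set properly), for $\pi_r$; near $f^{-1}(D)\cap\partial\Delta(r)$ the integrand $\log\tfrac1{\|s_D\circ f\|}$ has at most a logarithmic singularity along a proper real-analytic subset of $\partial\Delta(r)$ and is therefore integrable. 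Once this is settled, the remaining steps are routine manipulation of the normalization constants.
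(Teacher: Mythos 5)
Your proposal is correct and is exactly the argument the paper intends: it states that the first main theorem follows "immediately" from the Poincar\'e--Lelong formula together with the Jensen--Dynkin formula (Corollary \ref{J-D}), which is precisely your route, and your bookkeeping of the normalizations ($\tfrac{\pi^m}{(m-1)!}\,dd^c\psi\wedge\alpha^{m-1}=\tfrac14\Delta\psi\,dv$) and signs checks out. The integrability remarks you add about the polar singular set are a reasonable elaboration of what the paper leaves implicit.
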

\vskip\baselineskip

\subsection{Ahlfors-Shimizu's Form of $T_f(r,L)$}~

In order to  better   understand  the   characteristic function $T_f(r,L),$  
we need to  describe  how similar  $T_f(r,L)$  is to the classical characteristic function.  
We shall prove  that $T_f(r,L)$ has an alternative form very like 
 Ahlfors-Shimizu's  characteristic function. 

\begin{lemma}\label{grest}   For  $0<t\leq r,$ we have 
 $$g_r(o,x)=2A\int_0^\infty\frac{e^{-\frac{t^2}{as}}-e^{-\frac{r^2}{as}}}{V_{\max}(\sqrt{s})}ds$$
   for  all $x\in\partial\Delta(t).$
 \end{lemma}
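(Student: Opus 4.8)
The plan is simply to unwind the construction of $\Delta(t)$, $\partial\Delta(t)$ and $g_r(o,x)$ recorded above. By definition,
$$\partial\Delta(t)=\left\{x\in M:\ G(o,x)=2A\int_0^\infty\frac{e^{-\frac{t^2}{as}}}{V_{\max}(\sqrt{s})}\,ds\right\},$$
so the only input needed is that, for a point $x$ on this hypersurface, the Green function $G(o,x)$ equals the explicit constant on the right. Note that $o\in\Delta(t)$ for every $t>0$, so $x\in\partial\Delta(t)$ forces $x\neq o$; hence $G(o,x)<\infty$, and, by Theorem \ref{xoxo} together with the non-parabolicity of each $M_j$, the two integrals $\int_0^\infty e^{-t^2/(as)}V_{\max}(\sqrt{s})^{-1}\,ds$ and $\int_0^\infty e^{-r^2/(as)}V_{\max}(\sqrt{s})^{-1}\,ds$ both converge.

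Given this, I would finish by direct substitution: for $x\in\partial\Delta(t)$ the definition
$$g_r(o,x)=G(o,x)-2A\int_0^\infty\frac{e^{-\frac{r^2}{as}}}{V_{\max}(\sqrt{s})}\,ds$$
becomes, after replacing $G(o,x)$ by its value on $\partial\Delta(t)$,
$$g_r(o,x)=2A\int_0^\infty\frac{e^{-\frac{t^2}{as}}}{V_{\max}(\sqrt{s})}\,ds-2A\int_0^\infty\frac{e^{-\frac{r^2}{as}}}{V_{\max}(\sqrt{s})}\,ds=2A\int_0^\infty\frac{e^{-\frac{t^2}{as}}-e^{-\frac{r^2}{as}}}{V_{\max}(\sqrt{s})}\,ds,$$
where merging the two convergent integrals into one is legitimate by the convergence noted above. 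The hypothesis $0<t\leq r$ enters only to make everything consistent: it gives $\partial\Delta(t)\subset\overline{\Delta(t)}\subset\Delta(r)\cup\partial\Delta(r)$, so $g_r(o,x)\geq 0$, matching the non-negativity of the right-hand side since $e^{-t^2/(as)}\geq e^{-r^2/(as)}$.

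The only point that is not purely formal is the set-theoretic description of $\partial\Delta(t)$ as a level set of $G(o,\cdot)$, but this is already established in the discussion preceding the lemma (continuity of $G(o,\cdot)$ on $M\setminus\{o\}$, the limits $\lim_{x\to o}G(o,x)=\infty$ and $\lim_{x\to\infty}G(o,x)=0$, and the maximum principle for the harmonic function $G(o,\cdot)$ on $M\setminus\{o\}$, which rules out local extrema). Thus I expect no genuine obstacle here; the lemma is essentially a bookkeeping consequence of how $\Delta(t)$ and $g_r$ were defined.
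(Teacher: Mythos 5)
Your proposal is correct and follows essentially the same route as the paper: both evaluate $G(o,x)$ on $\partial\Delta(t)$ via the level-set description (equivalently, $g_t(o,x)=0$ there) and substitute into the definition of $g_r(o,x)$, merging the convergent integrals. The extra remarks on convergence and on the sign consistency for $t\leq r$ are harmless additions that the paper leaves implicit.
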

 \begin{proof}
Bt the  definition   of $\partial\Delta(t),$ we have     
 $$g_t(o,x)=G(o,x)-2A\int_0^\infty\frac{e^{-\frac{t^2}{as}}}{V_{\rm max}(\sqrt{s})}ds=0$$
 for   $x\in\partial\Delta(t).$
It is therefore   
    \begin{eqnarray*}
g_r(o,x)&=& G(o,x)-2A\int_0^\infty\frac{e^{-\frac{r^2}{at}}}{V_{\rm max}(\sqrt{t})}dt \\
&=& G(o,x)-2A\int_0^\infty\frac{e^{-\frac{t^2}{as}}}{V_{\rm max}(\sqrt{s})}ds+2A\int_0^\infty\frac{e^{-\frac{t^2}{as}}}{V_{\rm max}(\sqrt{s})}ds \\
&& -2A\int_0^\infty\frac{e^{-\frac{r^2}{as}}}{V_{\rm max}(\sqrt{s})}ds\\
&=& 2A\int_0^\infty\frac{e^{-\frac{t^2}{as}}-e^{-\frac{r^2}{as}}}{V_{\max}(\sqrt{s})}ds
   \end{eqnarray*}
    for  $x\in\partial\Delta(t).$
 \end{proof}
 
   Lemma \ref{grest} is  useful    for the establishment of   calculus lemma  in  Section 4.
 Here, we give  its another application. 
 
 \begin{theorem}\label{f5} $T_f(r, L)$ has the  Ahlfors-Shimizu's form
         \begin{eqnarray*}   
  T_f(r, L)  & =& \frac{4A\pi^m}{(m-1)!a}\int_0^rt\int_0^\infty\frac{e^{-\frac{t^2}{as}}}{V_{\max}(\sqrt{s})}\frac{ds}{s}dt\int_{\Delta(t)}f^*c_1(L,h)\wedge\alpha^{m-1}. 
          \end{eqnarray*}   
\end{theorem}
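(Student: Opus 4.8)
The plan is to start from the definition
$$T_f(r,L)=\frac{\pi^m}{(m-1)!}\int_{\Delta(r)}g_r(o,x)\,f^*c_1(L,h)\wedge\alpha^{m-1},$$
and to rewrite $g_r(o,\cdot)$ as a superposition of the indicator functions of the sub-level domains $\Delta(t)$, $0<t\le r$, so that interchanging the order of integration converts the single integral over $\Delta(r)$ into the iterated integral displayed in the statement. Write $v(x):=G(o,x)$ and, for $t>0$,
$$c(t):=2A\int_0^\infty\frac{e^{-t^2/(as)}}{V_{\max}(\sqrt s)}\,ds,$$
so that by construction $\Delta(t)=\{v>c(t)\}$ and $g_r(o,x)=v(x)-c(r)$. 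The convergence facts recorded in Section 3.3 (non-parabolicity of $M$, and of each $M_j$, which make all the relevant heat-kernel integrals finite) show that $c$ is a strictly decreasing function of class $\mathscr C^1$ on $(0,\infty)$ with $c(0^+)=\infty$ and $c(\infty)=0$; in particular $c$ is a bijection of $(0,\infty)$ onto itself and, for $x\ne o$, $x\in\Delta(t)\Longleftrightarrow t>c^{-1}(v(x))$.

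The first step is the pointwise identity
$$g_r(o,x)=\int_0^r\bigl(-c'(t)\bigr)\,\mathbf 1_{\Delta(t)}(x)\,dt,\qquad x\in\Delta(r),$$
which is the layer-cake counterpart of Lemma \ref{grest} (the latter merely records the boundary value $g_r(o,\cdot)\big|_{\partial\Delta(t)}=c(t)-c(r)$). Indeed, putting $t_x:=c^{-1}(v(x))<r$, monotonicity of $c$ gives $\mathbf 1_{\Delta(t)}(x)=\mathbf 1_{\{t>t_x\}}$, whence the right-hand side equals $c(t_x)-c(r)=v(x)-c(r)$. Substituting this into the definition of $T_f(r,L)$ and applying Tonelli's theorem — legitimate since $f^*c_1(L,h)\wedge\alpha^{m-1}$ is a positive measure on $M$ and the integrand is non-negative, with finiteness of $T_f(r,L)$ (precompactness of $\Delta(r)$) justifying the manipulation a posteriori — yields
$$T_f(r,L)=\frac{\pi^m}{(m-1)!}\int_0^r\bigl(-c'(t)\bigr)\left(\int_{\Delta(t)}f^*c_1(L,h)\wedge\alpha^{m-1}\right)dt;$$
the inner integral is non-decreasing in $t$, hence measurable, so Tonelli indeed applies.

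It then remains only to compute $-c'(t)$. Differentiating under the integral sign — justified by dominated convergence, using the super-exponential decay of $e^{-t^2/(as)}$ as $s\to 0^+$ and the integrability of $s\mapsto V_{\max}(\sqrt s)^{-1}$ near $s=\infty$ (non-parabolicity again) — gives
$$-c'(t)=\frac{4At}{a}\int_0^\infty\frac{e^{-t^2/(as)}}{V_{\max}(\sqrt s)}\frac{ds}{s},$$
and inserting this into the previous display produces exactly the asserted Ahlfors–Shimizu form. The only points requiring care are the $\mathscr C^1$-regularity, strict monotonicity, and the $0^+/\infty$ asymptotics of $c$ (all immediate from the estimates in Section 3.3) together with the measurability needed for Tonelli (automatic from monotonicity of $t\mapsto\int_{\Delta(t)}f^*c_1(L,h)\wedge\alpha^{m-1}$); everything else is elementary calculus.
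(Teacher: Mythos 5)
Your proof is correct, but it takes a different technical route from the paper's. The paper proves Theorem \ref{f5} by starting from the right-hand side, rewriting $f^*c_1(L,h)\wedge\alpha^{m-1}$ as $-\tfrac{(m-1)!}{4\pi^m}\Delta\log(h\circ f)\,dv$, recognizing the weight $\tfrac{4At}{a}\int_0^\infty e^{-t^2/(as)}V_{\max}(\sqrt s)^{-1}s^{-1}ds$ as $-\tfrac{d}{dt}$ of the function in Lemma \ref{grest}, integrating by parts in $t$, and then reassembling the level-set integrals via the slicing $\int_{\Delta(r)}\cdots dv=\int_0^r dt\int_{\partial\Delta(t)}\cdots d\sigma_t$. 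You instead slice the Green function rather than the domain: the layer-cake identity $g_r(o,x)=\int_0^r(-c'(t))\mathbf 1_{\Delta(t)}(x)\,dt$ followed by Tonelli achieves the same exchange of integration order without ever touching the hypersurfaces $\partial\Delta(t)$. This buys two things. First, it avoids the co-area-type slicing, which as written in the paper omits the Jacobian factor $\|\nabla\tau\|^{-1}$ (where $\Delta(t)=\{\tau<t\}$) relating $dv$ to $dt\,d\sigma_t$, and it avoids any appeal to Sard's theorem for the regularity of $\partial\Delta(t)$. Second, since the integrand is non-negative, Tonelli needs no a priori integrability, so your closing remark about finiteness of $T_f(r,L)$ is not even required. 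Your identity $g_r(o,\cdot)|_{\partial\Delta(t)}=c(t)-c(r)$ recovers Lemma \ref{grest} as a byproduct rather than using it as an input. The computation $-c'(t)=\tfrac{4At}{a}\int_0^\infty e^{-t^2/(as)}V_{\max}(\sqrt s)^{-1}s^{-1}ds$ and the justification of differentiation under the integral sign are handled adequately.
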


\begin{proof}     
By integration-by-parts formula and  Lemma \ref{grest}, we have  
        \begin{eqnarray*}   
&& \frac{4A\pi^m}{(m-1)!a}\int_0^rt\int_0^\infty\frac{e^{-\frac{t^2}{as}}}{V_{\max}(\sqrt{s})}\frac{ds}{s}dt\int_{\Delta(t)} f^*c_1(L,h)\wedge\alpha^{m-1} \\
&=& -\frac{A}{a}\int_0^rt\int_0^\infty\frac{e^{-\frac{t^2}{as}}}{V_{\max}(\sqrt{s})}\frac{ds}{s}dt\int_{\Delta(t)} \Delta\log(h\circ f)dv \\
&=& \frac{A}{2}\int_0^rd\int_0^\infty\frac{e^{-\frac{t^2}{as}}-e^{-\frac{r^2}{as}}}{V_{\max}(\sqrt{s})}ds
\int_{\Delta(t)} \Delta\log(h\circ f)dv \\
&=& \frac{A}{2}\left[\int_0^\infty\frac{e^{-\frac{t^2}{as}}-e^{-\frac{r^2}{as}}}{V_{\max}(\sqrt{s})}ds
\int_{\Delta(t)} \Delta\log(h\circ f)dv\right]_{t=0}^r \\
&& -\frac{A}{2}\int_0^r\int_0^\infty\frac{e^{-\frac{t^2}{as}}-e^{-\frac{r^2}{as}}}{V_{\max}(\sqrt{s})}ds
d\int_{\Delta(t)} \Delta\log(h\circ f)dv \\
&=& -\frac{A}{2}\int_0^rdt\int_0^\infty\frac{e^{-\frac{t^2}{as}}-e^{-\frac{r^2}{as}}}{V_{\max}(\sqrt{s})}ds
\int_{\partial\Delta(t)} \Delta\log(h\circ f)d\sigma_t \\
&=&  -\frac{A}{2}\int_0^rdt
\int_{\partial\Delta(t)}\int_0^\infty\frac{e^{-\frac{t^2}{as}}-e^{-\frac{r^2}{as}}}{V_{\max}(\sqrt{s})}ds\Delta\log(h\circ f)d\sigma_t. \\
&=&  -\frac{1}{4}\int_0^rdt\int_{\partial\Delta(t)}g_r(o,x)\Delta\log(h\circ f)d\sigma_t \\
 &=&    -\frac{1}{4}\int_{\Delta(r)}g_r(o,x)\Delta\log(h\circ f)dv \\
 &=& T_f(r,L).
        \end{eqnarray*}
This completes the proof. 
\end{proof}

 Locally, write $s_D=\tilde s_De,$ where $e$ is a  holomorphic local frame of $L.$ Due to   Poincar\'e-Lelong formula, we have 
$$\left[D\right]=dd^c\left[\log|\tilde s_D\circ f|^2\right]$$
in the sense of currents.  The similar  argument as in the proof of Lemma \ref{grest}   leads  to     
      \begin{eqnarray*}
 N_f(r,D) 
&=& \frac{4A\pi^m}{(m-1)!a}\int_0^rt\int_0^\infty\frac{e^{-\frac{t^2}{as}}}{V_{\max}(\sqrt{s})}\frac{ds}{s}dt\int_{f^*D\cap\Delta(t)}\alpha^{m-1},  \\
\overline{N}_f(r, D)&=& \frac{4A\pi^m}{(m-1)!a}\int_0^rt\int_0^\infty\frac{e^{-\frac{t^2}{as}}}{V_{\max}(\sqrt{s})}\frac{ds}{s}dt\int_{f^{-1}(D)\cap\Delta(t)}\alpha^{m-1}. 
 \end{eqnarray*} 
 \ \ \ \    To see it  more clearly,  we consider  $M=\mathbb C^m$  ($m\geq2$) as an example.  
 Due to  (\ref{t4}) and (\ref{g4}), we get  
 $$V_{\max}(\sqrt t)=\frac{\pi^m}{m!}t^m, \ \ \  \    a=4, \ \ \ \   A=\frac{1}{4^{m}m!},$$
which yields that  
      \begin{eqnarray*}
 \frac{4A\pi^m}{(m-1)!a}\int_0^\infty\frac{e^{-\frac{t^2}{as}}}{V_{\max}(\sqrt s)}\frac{ds}{s} 
=
 \frac{1}{4^m(m-1)!} \int_0^\infty e^{-\frac{t^2}{4s}}\frac{ds}{s^{m+1}} 
 =\frac{1}{t^{2m}}.
      \end{eqnarray*}      
By  this  with Theorem \ref{f5}, we are led to     
         \begin{eqnarray*}   
  T_f(r, L)   &=&\int_0^r\frac{dt}{t^{2m-1}}\int_{\mathbb B(t)}f^*c_1(L,h)\wedge\alpha^{m-1}, 
          \end{eqnarray*}   
which is  just the  Ahlfors-Shimizu's  characteristic function. 

\section{Two Key Lemmas}
 
The main purpose  in this section is to establish two key lemmas:  calculus lemma and logarithmic derivative lemma.
 Let. 
  $$M=M_1\#\cdots\#M_\vartheta$$
 be  a complete K\"ahler connected sum with $\vartheta$ non-parabolic ends, where  all  $M_j^{,}s$ are non-compact manifolds 
 satisfying   $(PH)$  in Property $\ref{ap2},$ with Ricci curvature bounded from below by a constant.

  \subsection{Calculus Lemma}~
  
   Let $\nabla$ denote the gradient operator on  Riemannian manifolds.  Cheng-Yau \cite{C-Y} proved the following theorem.

 \begin{lemma}\label{CY}
 Let $N$ be a complete Riemannian manifold.  Let   $B(x_0, r)$ be a geodesic ball centered at $x_0\in N$ with radius $r.$ 
 Then, there exists  a constant $c_N>0$ depending only on the dimension of $N$ such that 
 $$\frac{\|\nabla u(x)\|}{u(x)}\leq \frac{c_N r^2}{r^2-d(x_0, x)^2}\left(|R(r)|+\frac{1}{d(x_0, x)}\right)$$
 holds for any  non-negative  harmonic function $u$ on $B(x_0, r),$ 
 where $d(x_0, x)$ is the Riemannian distance between $x_0$ and $x,$ and $R(r)$ is the lower bound of  Ricci curvature of  $B(x_0, r).$
 \end{lemma}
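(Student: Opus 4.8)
The plan is to run the classical Bochner-technique argument of Cheng--Yau. First I would reduce to the case $u>0$ on $B(x_0,r)$: a non-negative harmonic function is either identically zero, where the statement is vacuous, or strictly positive by the strong maximum principle. Set $w=\log u$, so harmonicity gives $\Delta w=-\|\nabla w\|^2$. Writing $\varphi=\|\nabla w\|^2$ and applying the Bochner--Weitzenb\"ock formula to $w$,
$$\frac12\Delta\varphi=\|\mathrm{Hess}\,w\|^2+\langle\nabla w,\nabla\Delta w\rangle+\mathrm{Ric}(\nabla w,\nabla w),$$
then using $\|\mathrm{Hess}\,w\|^2\geq\frac1n(\Delta w)^2=\frac1n\varphi^2$ with $n=\dim N$, the identity $\langle\nabla w,\nabla\Delta w\rangle=-\langle\nabla w,\nabla\varphi\rangle$, and the hypothesis $\mathrm{Ric}\geq-|R(r)|$ on $B(x_0,r)$, I obtain the differential inequality
$$\frac12\Delta\varphi\geq\frac1n\varphi^2-\langle\nabla w,\nabla\varphi\rangle-|R(r)|\,\varphi$$
throughout $B(x_0,r)$.

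Next I would localize with a cutoff built from the distance $\rho(x)=d(x_0,x)$, for instance $\eta=r^2-\rho^2$, which is positive on $B(x_0,r)$ and vanishes on the boundary sphere. From $\nabla\eta=-2\rho\nabla\rho$ and the Laplacian comparison theorem $\Delta\rho\leq(n-1)\sqrt{|R(r)|}\coth\!\big(\sqrt{|R(r)|}\,\rho\big)\leq(n-1)\big(\sqrt{|R(r)|}+\rho^{-1}\big)$ one gets a lower bound for $\Delta\eta$ and an upper bound for $\|\nabla\eta\|^2/\eta$, each controlled by $r$, $\rho$ and $|R(r)|$.

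Then comes the core of the argument. The function $G=\eta^2\varphi$ is continuous on $\overline{B(x_0,r)}$ and vanishes on its boundary, hence attains its maximum at an interior point $x_1$ with $u(x_1)>0$; if $x_1$ is a cut point of $x_0$ I replace $\rho$ by the distance from a slightly perturbed base point (Calabi's trick), so that $\rho$ is smooth near $x_1$ and the comparison inequality holds in the barrier sense, which suffices for the maximum principle. At $x_1$ one has $\nabla G=0$, whence $\nabla\varphi=-2\varphi\,\nabla\eta/\eta$, and $\Delta G\leq0$; feeding these together with the Bochner inequality into $0\geq\Delta(\eta^2\varphi)=\eta^2\Delta\varphi+4\eta\langle\nabla\eta,\nabla\varphi\rangle+\big(2\eta\Delta\eta+2\|\nabla\eta\|^2\big)\varphi$, and estimating $\langle\nabla w,\nabla\varphi\rangle\leq\sqrt{\varphi}\,\|\nabla\varphi\|$, yields a quadratic inequality in $\varphi(x_1)$ of the schematic form $\frac1n\,\eta^2\varphi^2\leq C_n\,r^2\big(|R(r)|+\rho^{-1}\big)^2\,\eta\varphi+(\text{lower order in }\varphi)$. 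Solving it bounds $\eta(x_1)^2\varphi(x_1)$, hence $G$ everywhere, by $C_n\,r^4\big(|R(r)|+\rho^{-1}\big)^2$; taking square roots and restoring $\eta(x)=r^2-d(x_0,x)^2$ gives
$$\frac{\|\nabla u(x)\|}{u(x)}=\sqrt{\varphi(x)}\leq\frac{c_N\,r^2}{r^2-d(x_0,x)^2}\Big(|R(r)|+\frac{1}{d(x_0,x)}\Big)$$
with $c_N$ depending only on $n=\dim N$. The main obstacle is the bookkeeping in this last step: choosing the cutoff, and the power of $\eta$, and tracking the precise powers of $r$, $\rho$ and $|R(r)|$ so that the quadratic collapses to exactly the stated shape --- in particular producing the factor $r^2/(r^2-d^2)$ rather than a weaker one; a secondary technical point is the Calabi smoothing at the cut locus.
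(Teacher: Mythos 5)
Your overall route --- reduce to $u>0$ by the strong maximum principle, apply Bochner to $w=\log u$, localize with $\eta=r^2-\rho^2$, and run the maximum principle on $\eta^2\|\nabla w\|^2$ with Calabi's trick at the cut locus --- is the standard Cheng--Yau argument, which is also all the paper offers: it states the lemma and cites \cite{C-Y} without giving a proof. Your intermediate ingredients (the differential inequality for $\varphi=\|\nabla w\|^2$, the Laplacian comparison bound for $\Delta\rho$, the first- and second-order conditions at the interior maximum) are all correct.

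The gap is exactly at the step you defer to ``bookkeeping.'' At the maximum point $x_1$ the inequality one actually obtains is
\begin{equation*}
\frac{2}{n}\,\eta^2\varphi^2\;\le\; C_n\,\eta\rho\,\varphi^{3/2}+2|R(r)|\,\eta^2\varphi+C_n\bigl(\rho^2+\eta+\eta\rho\sqrt{|R(r)|}\bigr)\varphi ,
\end{equation*}
in which the Ricci term enters linearly in $\varphi$ with coefficient $2|R(r)|\,\eta^2$; it is \emph{not} of the schematic form $C_n r^2\bigl(|R(r)|+\rho^{-1}\bigr)^2\eta\varphi$ that you assert. Solving the true quadratic and using $\eta\le r^2$, $\rho\le r$ gives $\eta(x_1)^2\varphi(x_1)\le C_n r^4\bigl(|R(r)|+r^{-2}\bigr)\le C_n r^4\bigl(\sqrt{|R(r)|}+r^{-1}\bigr)^2$, hence
\begin{equation*}
\frac{\|\nabla u(x)\|}{u(x)}\;\le\;\frac{c_N\,r^2}{r^2-d(x_0,x)^2}\Bigl(\sqrt{|R(r)|}+\frac1r\Bigr),
\end{equation*}
i.e.\ the classical estimate with $\sqrt{|R(r)|}$. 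This is not the stated bound: although $1/r\le 1/d(x_0,x)$, the quantity $|R(r)|+1/d(x_0,x)$ does not dominate $\sqrt{|R(r)|}+1/r$ up to a constant depending only on $\dim N$ (take $|R(r)|$ small and $d(x_0,x)\asymp r$ large), so no amount of bookkeeping makes the quadratic ``collapse to exactly the stated shape.'' To be fair, the defect is inherited from the paper's formulation, which records Cheng--Yau's theorem with $|R(r)|$ where the original has the square root of the curvature bound; your argument proves the correct classical version, and that version suffices for the paper's later use in Theorem \ref{hh}, where the curvature bound is a fixed constant absorbed into constants allowed to depend on it --- but as written the proposal does not establish the lemma in the form stated.
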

 
Lemma \ref{CY} yields that (see \cite{L-T1}, Lemma 3.1   also)
 
    \begin{cor}\label{CY1}
 Let $N$ be a complete Riemannian manifold with non-negative Ricci curvature outside a compact subset  $K.$  
 Let $B(x_0, r_j)\supset K$ $(j=1,2)$ be the geodesic balls centered at $x_0\in N$ with radius $r_j$ satisfying $r_1<r_2.$ 
   Then, there exists  a constant $c_N>0$ depending only on the dimension of $N$ such that 
 $$\frac{\|\nabla u(x)\|}{u(x)}\leq \frac{c_N r_2^2}{r_2^2-d(x_0, x)^2}\frac{1}{d(x_0, x)}$$
 holds for any  non-negative  harmonic function $u$ on $B(x_0, r_2)\setminus B(x_0, r_1),$ 
 where $d(x_0, x)$ is the Riemannian distance between $x_0$ and $x.$ 
  \end{cor}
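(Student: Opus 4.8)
The plan is to derive Corollary~\ref{CY1} directly from Lemma~\ref{CY} by observing that on the annulus $B(x_0,r_2)\setminus B(x_0,r_1)$ one is automatically in the region where the Ricci curvature is non-negative. The key point is the hypothesis $B(x_0,r_1)\supset K$: since all points $x$ with $d(x_0,x)\geq r_1$ lie outside $K$, and the Ricci curvature of $N$ is non-negative outside $K$, the geodesic ball $B(x,\rho)$ for a suitable radius $\rho$ can be arranged to stay in the non-negatively curved region. More precisely, fix a point $x$ in the annulus, so $r_1<d(x_0,x)<r_2$, and consider the harmonic function $u$ restricted to a ball around $x$; one wants a ball centered at $x$ on which $\mathrm{Ric}\geq 0$ so that the relevant lower curvature bound $R(\cdot)$ appearing in Lemma~\ref{CY} can be taken to be $0$, killing the $|R(r)|$ term.

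First I would set $d=d(x_0,x)$ and apply Lemma~\ref{CY} not at $x_0$ but recentered: consider the geodesic ball $B(x_0, r_2)$ and the harmonic function $u$ on $B(x_0,r_2)\setminus B(x_0,r_1)$. The subtlety is that Lemma~\ref{CY} is stated for harmonic functions on a full geodesic ball, whereas here $u$ is only harmonic on an annulus. The standard fix (this is exactly what Li--Tam do in \cite{L-T1}, Lemma~3.1) is: for $x$ in the annulus, choose the radius $s:=\big(d(x_0,x)-r_1\big)$ or a comparable quantity and look at the geodesic ball $B(x,s)$; by the triangle inequality this ball is contained in $B(x_0, d(x_0,x)+s) \setminus B(x_0, r_1)$, hence contained in the region where $u$ is harmonic and where $\mathrm{Ric}\geq 0$. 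Applying Lemma~\ref{CY} to $u$ on $B(x,s)$ with lower curvature bound $R=0$ at the point $x$ (which is the center, so $d(\text{center},x)=0$ — one must instead evaluate at a slightly interior point, or take a limit) gives $\|\nabla u(x)\|/u(x)\leq c_N/s$ up to the geometric factor. Then one rewrites $s$ in terms of $d(x_0,x)$ and $r_2$: since $r_1<d(x_0,x)<r_2$, a routine estimate shows $1/s$ is bounded by $\dfrac{c_N r_2^2}{r_2^2-d(x_0,x)^2}\cdot\dfrac{1}{d(x_0,x)}$ after absorbing constants, which is the claimed bound.

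The main obstacle, and the step requiring care, is the correct choice of the recentering radius so that (a) the ball $B(x,s)$ avoids $K$ entirely (so $\mathrm{Ric}\geq 0$ there), (b) the ball $B(x,s)$ stays inside $B(x_0,r_2)$ (so $u$ is harmonic there), and (c) the resulting bound $c_N/s$ can be massaged into the specific form with the factor $r_2^2/(r_2^2-d(x_0,x)^2)$ that matches the conclusion. Conditions (a) and (b) force $s\leq \min\{d(x_0,x)-r_1,\; r_2-d(x_0,x)\}$ roughly, and one checks that choosing $s$ a fixed fraction of $r_2-d(x_0,x)$ (using that $d(x_0,x)>r_1$ is bounded away from the center) produces, after elementary manipulation, the stated inequality; the constant $c_N$ may change from line to line but depends only on $\dim N$. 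I expect the bulk of the write-up to be this elementary but slightly fiddly radius bookkeeping, together with the standard remark that Lemma~\ref{CY} applies on $B(x,s)$ with vanishing curvature lower bound because that ball lies in the non-negatively curved region $N\setminus K$; the citation to \cite{L-T1}, Lemma~3.1 can be invoked to shorten the argument.
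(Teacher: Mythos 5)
Your overall strategy --- recentre at a point of the annulus, choose a ball contained in $N\setminus K$ and in the region of harmonicity, and apply Lemma \ref{CY} with vanishing curvature lower bound --- is exactly the intended route; the paper gives no argument beyond citing Lemma \ref{CY} and Li--Tam (\cite{L-T1}, Lemma 3.1), which proceeds in just this way. One small point: ``take a limit'' at the centre does not rescue the term $1/d(\mathrm{centre},x)$ of Lemma \ref{CY}, which diverges there; you must genuinely recentre at a point $y$ with $d(y,x)$ a fixed fraction of $s$ and correspondingly require $B(y,s)\subset B(x,3s/2)$ to lie in the good region. That is fixable bookkeeping.

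The genuine gap is in your final conversion. Your containment conditions force $s\le\min\{d(x_0,x)-r_1,\ r_2-d(x_0,x)\}$, so the estimate you actually obtain is $\|\nabla u(x)\|/u(x)\le C_N/\min\{d(x_0,x)-r_1,\ r_2-d(x_0,x)\}$. The piece $1/(r_2-d(x_0,x))$ is indeed dominated by $\frac{r_2^2}{(r_2^2-d(x_0,x)^2)\,d(x_0,x)}$ (use $r_2+d\le 2r_2$ and $d\le r_2$), but the piece $1/(d(x_0,x)-r_1)$ is not: as $d(x_0,x)\to r_1^{+}$ your bound blows up while the claimed right-hand side stays bounded, tending to $c_Nr_2^2/\bigl((r_2^2-r_1^2)r_1\bigr)$. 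The phrase ``$d(x_0,x)>r_1$ is bounded away from the center'' gives no lower bound on $d(x_0,x)-r_1$, and no such bound can exist: an interior gradient estimate for a function harmonic only on the open annulus must degenerate at the inner sphere. So the ``routine estimate'' step fails, and the statement can only be proved with an additional restriction such as $d(x_0,x)\ge 2r_1$ (which is what Li--Tam impose, and which suffices for the paper's only use of the corollary in Theorem \ref{hh}, where $u$ is the global Green function evaluated on $\partial\Delta(r)$ with $K\subset\Delta(r)$ and $\rho(x)\ge r$ large), or with the hypothesis that $u$ is harmonic on a neighbourhood of the closed inner ball. Your write-up asserts the inequality for every $x$ in the annulus and therefore does not close.
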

   
 \begin{theorem}\label{hh} There exists a constant  $c_1>0$   such that 
 $$\|\nabla g_r(o,x)\|\leq c_1\left(|\kappa|+r^{-1}\right)\int_0^\infty\frac{e^{-\frac{r^2}{bt}}}{V_{\rm min}(\sqrt{t})}dt$$
 holds  for all $x\in\partial\Delta(r)$ satisfying  $K\subset\Delta(r),$ where $K$ is the central part of $M$ and  $\kappa$ is defined by $(\ref{min1}).$  
\end{theorem}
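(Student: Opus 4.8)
The plan is to estimate $\|\nabla g_r(o,x)\|$ on $\partial\Delta(r)$ by first comparing the gradient of the Green function $g_r(o,\cdot)$ with the gradient of the global Green function $G(o,\cdot)$ — they differ by a constant, so $\nabla g_r(o,x)=\nabla G(o,x)$ — and then invoking a Cheng--Yau type gradient estimate for the positive harmonic function $G(o,\cdot)$ on $M\setminus\{o\}$, applied on an annular region containing $\partial\Delta(r)$. The point $x\in\partial\Delta(r)$ lies at Riemannian distance $\rho(x)$ from $o$, and since $K\subset\Delta(r)$, a comparison of level sets of $G$ with geodesic spheres (using the two-sided bounds \eqref{green} and the fact that $G(o,\cdot)$ is monotone-like in $\rho$ up to constants) shows that $\rho(x)\simeq r$ for $x\in\partial\Delta(r)$. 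So the geometry is that of an annulus $B(o,R_2)\setminus B(o,R_1)$ with $R_1,R_2$ both comparable to $r$ and $R_1<\rho(x)<R_2$.

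First I would apply Lemma~\ref{CY} (Cheng--Yau) to the non-negative harmonic function $u=G(o,\cdot)$ on the geodesic ball $B(o, R_2)$ with $R_2$ a fixed multiple of $r$ (say $R_2=2r$), which is permissible because $M$ has Ricci curvature bounded below by $\kappa$. This gives
\begin{equation*}
\frac{\|\nabla G(o,x)\|}{G(o,x)}\leq \frac{c_M R_2^2}{R_2^2-\rho(x)^2}\left(|\kappa|+\frac{1}{\rho(x)}\right)\leq c\left(|\kappa|+r^{-1}\right),
\end{equation*}
where the last inequality uses $\rho(x)\simeq r$ to absorb the factor $R_2^2/(R_2^2-\rho(x)^2)$ into a constant and to replace $1/\rho(x)$ by $O(1/r)$. (If one prefers to avoid the constant-order term $|\kappa|$ blowing up near $o$, one uses the annulus version Corollary~\ref{CY1} on the region outside $K$; but since $|\kappa|$ is already present in the target bound, the plain Cheng--Yau estimate on a ball suffices.) Then I would multiply through by $G(o,x)$ and use the upper bound for the Green function from \eqref{green}, namely $G(o,x)\leq 2B\int_0^\infty V_{\rm min}(\sqrt t)^{-1}e^{-\rho(x)^2/(bt)}\,dt$, together with $\rho(x)\simeq r$, to replace $\rho(x)^2$ in the exponent by $r^2$ at the cost of adjusting the constant $b$. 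This yields
\begin{equation*}
\|\nabla g_r(o,x)\|=\|\nabla G(o,x)\|\leq c\left(|\kappa|+r^{-1}\right)\cdot 2B\int_0^\infty\frac{e^{-\frac{r^2}{bt}}}{V_{\rm min}(\sqrt t)}\,dt,
\end{equation*}
which is the claimed inequality after renaming constants.

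The main obstacle I anticipate is the comparison $\rho(x)\simeq r$ on $\partial\Delta(r)$: the set $\Delta(r)$ is defined as a superlevel set of $G(o,\cdot)$ against a threshold built from $V_{\rm max}$, not a geodesic ball, so one must genuinely use both the upper and lower heat-kernel bounds (Corollary~\ref{estimate}) to trap $\rho(x)$ between two multiples of $r$ — the lower bound of $G$ forces $\rho(x)$ not too large, the upper bound forces it not too small — and this requires knowing that the integrals $\int_0^\infty V_{\rm min}(\sqrt t)^{-1}e^{-r^2/(bt)}dt$ and $\int_0^\infty V_{\rm max}(\sqrt t)^{-1}e^{-r^2/(at)}dt$ are comparable in logarithmic scale, which in turn rests on the volume doubling coming from $(PH)$. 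A secondary technical point is that $K\subset\Delta(r)$ must be invoked to ensure $x$ is bounded away from $o$ (so $\rho(x)\geq \mathrm{diam}(K)>0$ and the $1/\rho(x)$ term is harmless) and to legitimately treat $G(o,\cdot)$ as a genuine positive harmonic function on a neighbourhood of $\partial\Delta(r)$. Once $\rho(x)\simeq r$ is in hand, the rest is a direct substitution into Cheng--Yau and \eqref{green}.
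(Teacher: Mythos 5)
Your overall strategy (identify $\nabla g_r=\nabla G$, apply a Cheng--Yau gradient estimate to $G(o,\cdot)$, then insert the upper Green function bound \eqref{green}) is the same as the paper's, but your implementation has a genuine gap: it hinges on the two-sided comparison $\rho(x)\simeq r$ on $\partial\Delta(r)$, and in particular on an upper bound $\rho(x)\leq Cr$, which you need both to place $x$ inside $B(o,2r)$ and to keep the factor $R_2^2/(R_2^2-\rho(x)^2)$ bounded. That upper bound is false in general. The level set $\partial\Delta(r)$ is cut out by a threshold built from $V_{\max}$, while the upper bound on $G$ involves $V_{\min}$; when the ends have genuinely different volume growth ($V_{\min}\not\simeq V_{\max}$, which is allowed here --- homogeneity is only assumed later, in Theorem \ref{main4}), the domain $\Delta(r)$ bulges far out into the end of smallest volume growth, and $\rho(x)$ can be of a strictly larger order than $r$ (e.g.\ a higher power of $r$). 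So the reduction you flag as "the main obstacle" is not merely technical --- it cannot be carried out. The correct observation is that only the one-sided bound $\rho(x)\geq r$ is needed, and that one is immediate: for $x\in\partial\Delta(r)$ the lower bound in \eqref{green} gives $\int_0^\infty V_{\max}(\sqrt t)^{-1}e^{-\rho(x)^2/(at)}dt\leq\int_0^\infty V_{\max}(\sqrt t)^{-1}e^{-r^2/(at)}dt$, whence $\rho(x)\geq r$ by monotonicity. With $\rho(x)\geq r$ one has $e^{-\rho(x)^2/(bt)}\leq e^{-r^2/(bt)}$ and $\rho(x)^{-1}\leq r^{-1}$ directly (no adjustment of $b$), \emph{provided} the gradient estimate is used in the pointwise Green-function form $\|\nabla G(o,x)\|\leq c_M\bigl(|R|+\rho(x)^{-1}\bigr)G(o,x)$, which is local at $x$ (Cheng--Yau on balls centred at $x$ of radius comparable to $\rho(x)$, avoiding the pole at $o$), not on a large ball centred at $o$.

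A second, smaller gap concerns the curvature constant. The quantity $\kappa=\min_j\kappa_j$ bounds the Ricci curvature of the ends only; the global Ricci lower bound $R$ of $M$ (which includes the central part $K$) may differ, and the paper must juggle constants ($c_1=\max\{c_0,|c_0R/\kappa|\}$) to convert $|R|$ into $|\kappa|$. More importantly, when $\kappa=0$ the asserted bound is $c_1r^{-1}$ with \emph{no} additive constant, yet $M$ may still have negative Ricci curvature on $K$; your parenthetical claim that "the plain Cheng--Yau estimate on a ball suffices" is therefore wrong in exactly this case. One must use the annulus version (Corollary \ref{CY1}) on the region of non-negative Ricci curvature outside $K$ --- which is precisely why the hypothesis $K\subset\Delta(r)$ appears in the statement.
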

\begin{proof}  
Since    $M_j$  has the Ricci curvature bounded from below by a constant $\kappa_j$ for $j=1,\cdots,\vartheta,$ 
 we infer   that   the Ricci curvature of $M$ is bounded from below by a constant,  denoted by $R.$
 It follows  from Lemma \ref{CY} and  (\ref{green})  that   (see Remark 5 in \cite{TS} also)
  \begin{eqnarray*}
\|\nabla G(o, x)\|&\leq& c_M\left(|R|+\rho(x)^{-1}\right)G(o,x) \\
&\leq& c_M\left(|R|+\rho(x)^{-1}\right)\int_0^\infty\frac{e^{-\frac{\rho(x)^2}{bt}}}{V_{\rm min}(\sqrt{t})}dt
  \end{eqnarray*}
 for some large  constant $c_M>0$ which depends  only on  the dimension  of $M.$   By  (\ref{green})   again 
 $$\int_0^\infty\frac{e^{-\frac{\rho(x)^2}{at}}}{V_{\rm max}(\sqrt{t})}dt \leq \int_0^\infty\frac{e^{-\frac{r^2}{at}}}{V_{\rm max}(\sqrt{t})}dt$$
  for  $x\in\partial\Delta(r),$  which  gives    
 $\rho(x)\geq r$
  for  $x\in\partial\Delta(r).$
Set $c_0=2c_MB,$ then we obtain    
      \begin{eqnarray*} 
\|\nabla g_r(o, x)\|\big|_{\partial\Delta(r)}&=&\|\nabla G(o, x)\|\big|_{\partial\Delta(r)} \\
&\leq& c_0\left(|R|+r^{-1}\right)\int_0^\infty\frac{e^{-\frac{r^2}{bt}}}{V_{\rm min}(\sqrt{t})}dt.
      \end{eqnarray*}
   Hence,   for  $\kappa\not=0$  
            \begin{eqnarray*} 
\|\nabla g_r(o, x)\|\big|_{\partial\Delta(r)}
&\leq& c_1\left(|\kappa|+r^{-1}\right)\int_0^\infty\frac{e^{-\frac{r^2}{bt}}}{V_{\rm min}(\sqrt{t})}dt,
      \end{eqnarray*}
where $c_1=\max\{c_0, |c_0R/\kappa|\}.$ If $\kappa=0,$ then all $M_j^,s$ have non-negative Ricci curvature. So,  using  Corollary \ref{CY1} and (\ref{green}), 
we can also obtain
          \begin{eqnarray*} 
\|\nabla g_r(o, x)\|\big|_{\partial\Delta(r)}
&\leq& \frac{c_1}{r}\int_0^\infty\frac{e^{-\frac{r^2}{bt}}}{V_{\rm min}(\sqrt{t})}dt
      \end{eqnarray*}
for some large constant $c_1>0$   if    $K\subset \Delta(r).$  
       This completes the proof. 
  \end{proof}

   By
   $$\frac{\partial g_r(o,x)}{\partial{\vec{\nu}}}=\|\nabla g_r(o, x)\|$$
  for  $x\in\partial\Delta(r),$ 
we  obtain   an estimate for upper bounds of  $d\pi_r$  as follows.

 \begin{cor}\label{pop} There exists a constant  $c_2>0$   such that 
 $$d\pi_r\leq c_2\left(|\kappa|+r^{-1}\right)\int_0^\infty\frac{e^{-\frac{r^2}{bt}}}{V_{\rm min}(\sqrt{t})}dt\cdot d\sigma_r$$
 holds  for all $x\in\partial\Delta(r)$ satisfying  $K\subset\Delta(r),$ where $K$ is the central part of $M,$  $d\sigma_r$ is the Riemannian area element of $\partial\Delta(r)$
 and  $\kappa$ is defined by $(\ref{min1}).$  
 \end{cor}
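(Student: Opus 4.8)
The statement to prove is Corollary~\ref{pop}, which bounds the harmonic measure $d\pi_r$ on $\partial\Delta(r)$. The plan is essentially to combine the identity relating $d\pi_r$ to the normal derivative of the Green function $g_r(o,x)$ with the gradient bound already furnished by Theorem~\ref{hh}. First I would recall the definition
$$d\pi_r=\frac{1}{2}\frac{\partial g_r(o,x)}{\partial{\vec{\nu}}}\,d\sigma_r,$$
where $\partial/\partial\vec\nu$ is the inward normal derivative on $\partial\Delta(r)$. Since $g_r(o,x)$ vanishes on $\partial\Delta(r)$ and is positive inside $\Delta(r)$, its gradient on the boundary is normal to $\partial\Delta(r)$ and points inward; hence $\partial g_r(o,x)/\partial\vec\nu=\|\nabla g_r(o,x)\|$ on $\partial\Delta(r)$. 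This is exactly the remark made just before the statement of the corollary, so no further work is needed there.

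Next I would invoke Theorem~\ref{hh}: for all $x\in\partial\Delta(r)$ with $K\subset\Delta(r)$,
$$\|\nabla g_r(o,x)\|\leq c_1\left(|\kappa|+r^{-1}\right)\int_0^\infty\frac{e^{-\frac{r^2}{bt}}}{V_{\rm min}(\sqrt{t})}\,dt.$$
Substituting this into the formula for $d\pi_r$ and absorbing the factor $1/2$ into the constant (set $c_2=c_1/2$, or simply $c_2=c_1$ to be safe), I obtain
$$d\pi_r\leq c_2\left(|\kappa|+r^{-1}\right)\int_0^\infty\frac{e^{-\frac{r^2}{bt}}}{V_{\rm min}(\sqrt{t})}\,dt\cdot d\sigma_r$$
on $\partial\Delta(r)$ whenever $K\subset\Delta(r)$, which is precisely the claim. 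I would also note that the integral converges because all $M_j$'s are non-parabolic (Theorem~\ref{xoxo}), so the right-hand side is finite.

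There is really no main obstacle here — this corollary is a direct repackaging of Theorem~\ref{hh} via the definition of harmonic measure, and the only thing to be careful about is the constant bookkeeping and the sign/orientation of the normal derivative. The substantive analytic work (the Cheng–Yau gradient estimate applied to $G(o,x)$, the Green function two-sided bounds, and the separate treatment of the cases $\kappa\ne0$ and $\kappa=0$) has all been carried out already in the proof of Theorem~\ref{hh}; the present statement just transports that bound through the one-line identity $d\pi_r=\tfrac12\|\nabla g_r(o,\cdot)\|\,d\sigma_r$ on $\partial\Delta(r)$.
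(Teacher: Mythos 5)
Your proposal is correct and is exactly the paper's argument: the paper derives Corollary \ref{pop} from Theorem \ref{hh} via the identity $d\pi_r=\tfrac12\,\partial g_r(o,x)/\partial\vec\nu\,d\sigma_r$ together with $\partial g_r(o,x)/\partial\vec\nu=\|\nabla g_r(o,x)\|$ on $\partial\Delta(r)$, which is precisely your one-line substitution with the constant absorbed.
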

    
 \begin{lemma}[Borel's Lemma]\label{} Let $u\geq0$ be a non-decreasing  function  on $(r_0, \infty)$ with $r_0\geq0.$ Then for any $\delta>0,$ there exists a subset $E_\delta\subset(r_0,\infty)$
 of finite Lebesgue measure such that  
 $$u'(r)\leq u(r)^{1+\delta}$$
 holds for all $r>r_0$ outside $E_{\delta}.$  
 \end{lemma}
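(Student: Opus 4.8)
\smallskip

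The plan is to run the classical Borel--Nevanlinna argument, bounding the exceptional set by an integral of a logarithmic derivative. First I would dispose of the trivial case: if $u\equiv 0$ on $(r_0,\infty)$, then $u'(r)=0=u(r)^{1+\delta}$ wherever $u$ is differentiable, so one may take $E_\delta$ to be the (Lebesgue--null) set where $u$ fails to be differentiable. Otherwise, since $u$ is non-decreasing there is some $r_1>r_0$ with $u(r)>0$ for all $r\ge r_1$, and I set
$$E_\delta=(r_0,r_1]\cup\left\{r>r_1:\ u'(r)>u(r)^{1+\delta}\right\}.$$
By construction, $u'(r)\le u(r)^{1+\delta}$ holds at every $r>r_0$ outside $E_\delta$ at which $u$ is differentiable, hence (by Lebesgue's theorem on differentiation of monotone functions) for all $r>r_0$ outside $E_\delta$ up to a null set; absorbing that null set into $E_\delta$ does not change its measure, so it suffices to show $|E_\delta|<\infty$.

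The piece $(r_0,r_1]$ has measure $r_1-r_0<\infty$, so only the second piece needs estimating. On $\{r>r_1:\ u'(r)>u(r)^{1+\delta}\}$ one has $1<u'(r)/u(r)^{1+\delta}$, hence
$$\left|\left\{r>r_1:\ u'(r)>u(r)^{1+\delta}\right\}\right|\le\int_{r_1}^{\infty}\frac{u'(r)}{u(r)^{1+\delta}}\,dr.$$
The function $\phi(r):=-\tfrac{1}{\delta}u(r)^{-\delta}$ is non-decreasing on $[r_1,\infty)$ (as $u$ is non-decreasing and strictly positive there) with $\phi'(r)=u'(r)u(r)^{-1-\delta}$ a.e., so the elementary inequality $\int_{r_1}^{R}\phi'(r)\,dr\le\phi(R)-\phi(r_1)$ valid for non-decreasing $\phi$ yields, for every $R>r_1$,
$$\int_{r_1}^{R}\frac{u'(r)}{u(r)^{1+\delta}}\,dr\le\frac{1}{\delta}\left(\frac{1}{u(r_1)^{\delta}}-\frac{1}{u(R)^{\delta}}\right)\le\frac{1}{\delta\,u(r_1)^{\delta}}.$$
Letting $R\to\infty$ shows $\int_{r_1}^{\infty}u'(r)u(r)^{-1-\delta}\,dr\le(\delta\,u(r_1)^{\delta})^{-1}<\infty$, since $u(r_1)>0$. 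Combining the two pieces gives $|E_\delta|\le(r_1-r_0)+(\delta\,u(r_1)^{\delta})^{-1}<\infty$, which is the assertion.

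There is no serious obstacle here; this is a standard calculus lemma. The only points requiring a little care are those already built into the argument above: restricting to the region where $u$ is strictly positive so that $u^{-\delta}$ is finite, discarding the null set on which the monotone function $u$ fails to be differentiable, and using for a merely non-decreasing (not necessarily absolutely continuous) $\phi$ the inequality $\int_{r_1}^{R}\phi'(r)\,dr\le\phi(R)-\phi(r_1)$ rather than an equality, which is all that is needed.
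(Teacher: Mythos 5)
Your proof is correct and follows essentially the same route as the paper: both isolate the trivial case $u\equiv 0$, pick $r_1$ with $u(r_1)>0$, and bound the measure of the exceptional set by $\int_{r_1}^{\infty}u'(r)u(r)^{-1-\delta}\,dr\le(\delta\,u(r_1)^{\delta})^{-1}$. Your version is in fact slightly more careful than the paper's, which writes that integral as an exact antiderivative evaluation, whereas you correctly use only the inequality $\int\phi'\le\phi(R)-\phi(r_1)$ valid for a merely non-decreasing $\phi$.
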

 \begin{proof} The conclusion is clearly true  for $u\equiv0.$ Next, we assume that $u\not\equiv0.$
 Since $u\geq0$ is a non-decreasing  function,  there is $r_1>r_0$ such that $u(r_1)>0.$  Using the non-decreasing property of $u,$    
$\eta:=\lim_{r\to\infty}u(r)$
exists or $\eta=\infty.$  If $\eta=\infty,$ then $\eta^{-1}=0.$ 
 Set  
 $$E_\delta=\left\{r\in(r_0,\infty):  u'(r)>u(r)^{1+\delta}\right\}.$$
Since $u$ is  non-decreasing  on $(r_0, \infty),$    $u'(r)$ exists for  almost all  $r\in(r_0, \infty).$   It is therefore  
   \begin{eqnarray*}
\int_{E_\delta}dr 
 &\leq& \int_{r_0}^{r_1}dr+\int_{r_1}^\infty\frac{u'(r)}{u(r)^{1+\delta}}dr \\
 &=&\frac{1}{\delta u(r_1)^\delta}-\frac{1}{\delta \eta^\delta}+r_1-r_0 \\
 &<&\infty.
    \end{eqnarray*}
 This completes the proof. 
 \end{proof}

\begin{theorem}[Calculus Lemma]\label{cal} Let $k\geq0$ be a locally integrable function on $M.$ Assume that $k$ is locally bounded at $o.$ Then there exists a constant $C>0$ such that 
 for any   $\delta>0,$  there exists   a subset $E_{\delta}\subset(0,\infty)$ of finite Lebesgue measure such that
$$\int_{\partial\Delta(r)}kd\pi_r\leq C\Xi(r,\delta, \kappa)
    \left(\int_{\Delta(r)}g_r(o,x)kdv\right)^{(1+\delta)^2}$$
holds for all $r>0$ outside $E_{\delta},$ where $\Xi(r,\delta, \kappa)$ is defined by $(\ref{XI}).$  
 \end{theorem}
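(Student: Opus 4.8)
The plan is to relate the boundary integral $\int_{\partial\Delta(r)}k\,d\pi_r$ to the solid integral $\int_{\Delta(r)}g_r(o,x)k\,dv$ by differentiating the latter in $r$, exactly as in the classical proof of the calculus lemma for balls in $\mathbb{C}^m$. First I would use the Ahlfors--Shimizu-type representation established in Theorem \ref{f5} and the companion formulas for the counting functions: writing
$$
\Phi(r):=\int_{\Delta(r)}g_r(o,x)k\,dv,
$$
the same integration-by-parts manipulation that produced Theorem \ref{f5}, applied with $\Delta\log(h\circ f)$ replaced by the measure $k\,dv$, gives
$$
\Phi(r)=\frac{4A\pi^m}{(m-1)!\,a}\int_0^r t\left(\int_0^\infty\frac{e^{-t^2/(as)}}{V_{\max}(\sqrt s)}\frac{ds}{s}\right)dt\int_{\Delta(t)}k\,dv.
$$
Differentiating in $r$ and discarding the (nonnegative) interior part, one obtains a lower bound of the shape
$$
\Phi'(r)\ \gtrsim\ r\left(\int_0^\infty\frac{e^{-r^2/(as)}}{V_{\max}(\sqrt s)}\frac{ds}{s}\right)\int_{\Delta(r)}k\,dv\ \gtrsim\ r\left(\int_0^\infty\frac{e^{-r^2/(as)}}{V_{\max}(\sqrt s)}\frac{ds}{s}\right)\,\Psi(r),
$$
where $\Psi(r):=\int_{\Delta(r)}k\,dv$; the coarea formula for $g_r$ identifies $\Psi'(r)$ (up to the normal-derivative factor $\partial g_r/\partial\vec\nu$) with $\int_{\partial\Delta(r)}k\,d\sigma_r$.

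Next I would convert the $d\sigma_r$-integral into a $d\pi_r$-integral using Corollary \ref{pop}: since
$$
d\pi_r=\frac12\frac{\partial g_r(o,x)}{\partial\vec\nu}\,d\sigma_r\quad\text{and}\quad d\pi_r\ \le\ c_2\bigl(|\kappa|+r^{-1}\bigr)\!\int_0^\infty\frac{e^{-r^2/(bt)}}{V_{\min}(\sqrt t)}\,dt\cdot d\sigma_r,
$$
we get
$$
\int_{\partial\Delta(r)}k\,d\pi_r\ \le\ c_2\bigl(|\kappa|+r^{-1}\bigr)\!\int_0^\infty\frac{e^{-r^2/(bt)}}{V_{\min}(\sqrt t)}\,dt\cdot\int_{\partial\Delta(r)}k\,d\sigma_r.
$$
Then I would apply Borel's Lemma (Lemma \ref{}) twice in the standard double-exponent fashion: once to pass from $\Psi(r)$ to $\Psi'(r)$ (hence to $\int_{\partial\Delta(r)}k\,d\sigma_r$), picking up an exponent $1+\delta$, and once to pass from $\Phi(r)$ to $\Phi'(r)$, picking up another $1+\delta$; combining these with the chain
$$
\int_{\partial\Delta(r)}k\,d\sigma_r\ \lesssim\ \Psi'(r)\ \le\ \Psi(r)^{1+\delta}\ \lesssim\ \left(\frac{\Phi'(r)}{\,r\int_0^\infty e^{-r^2/(as)}V_{\max}(\sqrt s)^{-1}\,ds/s\,}\right)^{1+\delta}\ \le\ \left(\frac{\Phi(r)^{1+\delta}}{\,r\int_0^\infty e^{-r^2/(as)}V_{\max}(\sqrt s)^{-1}\,ds/s\,}\right)^{1+\delta}
$$
yields exactly the factor $\bigl(r\int_0^\infty e^{-r^2/(as)}V_{\max}(\sqrt s)^{-1}\,ds/s\bigr)^{-(1+\delta)}$ in the denominator of $\Xi(r,\delta,\kappa)$ and the exponent $(1+\delta)^2$ on $\Phi(r)$. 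Multiplying back by the prefactor from Corollary \ref{pop} assembles precisely $\Xi(r,\delta,\kappa)$ as in $(\ref{XI})$, with the two exceptional sets absorbed into a single $E_\delta$ of finite measure.

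The main obstacle is the bookkeeping at the two places where monotonicity is needed for Borel's Lemma: $\Phi$ and $\Psi$ must be shown to be nondecreasing (clear, since $k\ge0$, $g_r\ge0$, and $\Delta(r)$ is increasing in $r$), and one must be careful that the "inner" volume factor $r\int_0^\infty e^{-r^2/(as)}V_{\max}(\sqrt s)^{-1}\,ds/s$ arising from differentiating the Ahlfors--Shimizu kernel is genuinely the quantity appearing in the denominator of $\Xi$, rather than something only comparable to it up to the constants $A,a$ which are harmless. A secondary point is the hypothesis that $k$ is locally bounded at $o$: this is used to guarantee $\Phi(r)<\infty$ and $\Phi(r)\to0$ as $r\to 0$ (since $g_r(o,\cdot)$ has only a mild — logarithmic in real dimension $2$, polynomial otherwise — singularity at $o$ and $k$ stays bounded there), so that the integration by parts has no boundary contribution at $t=0$, exactly as in the proof of Theorem \ref{f5}. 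Once these monotonicity and finiteness points are in place, the estimate is a routine differentiate–then–Borel argument.
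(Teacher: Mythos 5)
Your proposal is correct and follows essentially the same route as the paper: the paper likewise uses Lemma \ref{grest} to write $\Lambda(r)=\int_{\Delta(r)}g_r(o,x)k\,dv$ as an iterated integral, differentiates to isolate the factor $r\int_0^\infty e^{-r^2/(at)}V_{\max}(\sqrt t)^{-1}\,dt/t$, applies Borel's lemma twice to produce the exponents $1+\delta$ and $(1+\delta)^2$, and then converts $d\sigma_r$ to $d\pi_r$ via Corollary \ref{pop} to assemble $\Xi(r,\delta,\kappa)$. The only cosmetic discrepancy is the stray factor $\pi^m/(m-1)!$ carried over from Theorem \ref{f5} (irrelevant since all constants are absorbed into $C$), and your explicit monotonicity checks for $\Phi$ and $\Psi$ are exactly the implicit hypotheses of the paper's double Borel application.
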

\begin{proof} 
  Lemma \ref{grest} yields  that
      \begin{eqnarray*}
\Lambda(r)&:=&   \int_{\Delta(r)}g_r(o,x)kdv \\
&=& \int_0^rdt\int_{\partial\Delta(t)}g_r(o,x)kd\sigma_t \\
      &=& 2A \int_{0}^rdt\int_0^\infty\frac{e^{-\frac{t^2}{as}}-e^{-\frac{r^2}{as}}}{V_{\max}(\sqrt{s})}ds\int_{\partial\Delta(t)}kd\sigma_t. 
      \end{eqnarray*}
     Differentiating $\Lambda,$  we obtain  
                 \begin{eqnarray*}
      \Lambda'(r)&=&\frac{d\Lambda(r)}{dr} \\
      &=&\frac{4Ar}{a}\int_0^\infty\frac{e^{-\frac{r^2}{at}}}{V_{\max}(\sqrt{t})}\frac{dt}{t}\int_{0}^rdt\int_{\partial\Delta(t)}kd\sigma_t. 
                       \end{eqnarray*}
      In further  
      $$\frac{d}{dr}\left(\frac{\Lambda'(r)}{r\displaystyle\int_0^\infty\frac{e^{-\frac{r^2}{at}}}{V_{\max}(\sqrt{t})}\frac{dt}{t}}\right)=\frac{4A}{a}\int_{\partial\Delta(r)}kd\sigma_r.$$
   Applying  Borel's lemma  to the left hand side on the above equality twice, then we see that  there exists a subset $F_\delta\subset(0,\infty)$ of finite Lebesgue measure such that   
            \begin{eqnarray*}\label{w1}
    &&  \int_{\partial\Delta(r)}kd\sigma_r   \\
    &\leq& 
      \frac{a}{4A}\left(r\int_0^\infty\frac{e^{-\frac{r^2}{at}}}{V_{\max}(\sqrt{t})}\frac{dt}{t}\right)^{-(1+\delta)}\Lambda(r)^{(1+\delta)^2}  \nonumber \\
      &=&\frac{a}{4A}\left(r\int_0^\infty\frac{e^{-\frac{r^2}{at}}}{V_{\max}(\sqrt{t})}\frac{dt}{t}\right)^{-(1+\delta)}\left(\int_{\Delta(r)}g_r(o,x)kdv\right)^{(1+\delta)^2} \nonumber
           \end{eqnarray*}
  holds for all $r>0$ outside $F_\delta.$
         By    Corollary \ref{pop},    there exists  a constant   $c>0$ such that  
 \begin{equation*}\label{w2}
 d\pi_r\leq c\left(|\kappa|+r^{-1}\right)\int_0^\infty\frac{e^{-\frac{r^2}{bt}}}{V_{\rm min}(\sqrt{t})}dt\cdot d\sigma_r
 \end{equation*}
holds for all $r>r_0,$ where $r_0>0$ is number such that $K\subset\Delta(r).$     Combining the above,   we conclude that   
  \begin{eqnarray*}\label{formula}
       \int_{\partial\Delta(r)}kd\pi_r  &\leq&  
     \frac{ac}{4A}\frac{\left(|\kappa|+r^{-1}\right)\displaystyle\int_0^\infty\frac{e^{-\frac{r^2}{bt}}}{V_{\rm min}(\sqrt{t})}dt}{\left(r\displaystyle\int_0^\infty\frac{e^{-\frac{r^2}{at}}}{V_{\max}(\sqrt{t})}\frac{dt}{t}\right)^{1+\delta}} \left(\int_{\Delta(r)}g_r(o,x)kdv\right)^{(1+\delta)^2}  \nonumber \\
     &=&  C\Xi(r,\delta, \kappa)
     \left(\int_{\Delta(r)}g_r(o,x)kdv\right)^{(1+\delta)^2}
            \end{eqnarray*}
             for $r>0$ outside $E_\delta:=F_\delta\cup(0, r_0],$ where $C=ac/4A.$ 
\end{proof}

\subsection{Logarithmic Derivative Lemma}~

Let $\psi$ be a meromorphic function on $M.$
The norm of the gradient $\nabla\psi$  is given as 
$$\|\nabla\psi\|^2=2\sum_{i,j=1}^m g^{i\overline j}\frac{\partial\psi}{\partial z_i}\overline{\frac{\partial \psi}{\partial  z_j}}$$
in   holomorphic local coordinates $z_1,\cdots,z_m,$ here $(g^{i \bar j})$ is the inverse of $(g_{i \bar j}).$
Define   
$T(r,\psi):=m(r,\psi)+N(r,\psi),$ where 
\begin{eqnarray*}
m(r,\psi)&=&\int_{\partial\Delta(r)}\log^+|\psi|d\pi_r, \\
N(r,\psi)&=& \frac{\pi^m}{(m-1)!}\int_{\psi^*\infty\cap \Delta(r)}g_r(o,x)\alpha^{m-1}.
 \end{eqnarray*}

   On $\mathbb P^1(\mathbb C),$ we take a singular metric
$$\Psi=\frac{1}{|\zeta|^2(1+\log^2|\zeta|)}\frac{\sqrt{-1}}{4\pi^2}d\zeta\wedge d\bar \zeta,$$
which satisfies   that 
$$\int_{\mathbb P^1(\mathbb C)}\Psi=1.$$

   We  first prove  two lemmas as follows. 
\begin{lemma}\label{oo12} Let $\psi\not\equiv0$ be a  meromorphic function on  $M.$  Then 
$$\frac{1}{4\pi}\int_{\Delta(r)}g_r(o,x)\frac{\|\nabla\psi\|^2}{|\psi|^2(1+\log^2|\psi|)}dv\leq T(r,\psi)+O(1).$$
\end{lemma}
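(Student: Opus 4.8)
The plan is to recognize the left-hand side as the characteristic function of $\psi$, regarded as a holomorphic map $\psi\colon M\to\PP^1(\CC)$, taken against the (singular, positive) $(1,1)$-form $\Psi$, and then to compare $\Psi$ with the Fubini--Study form. First I would record the pointwise identity
\begin{equation*}
\frac{\pi^m}{(m-1)!}\,\psi^*\Psi\wedge\alpha^{m-1}=\frac{1}{4\pi}\,\frac{\|\nabla\psi\|^2}{|\psi|^2\bigl(1+\log^2|\psi|\bigr)}\,dv ,
\end{equation*}
valid on $M\setminus\psi^{-1}\{0,\infty\}$: there $\psi^*\Psi=|\psi|^{-2}\bigl(1+\log^2|\psi|\bigr)^{-1}\tfrac{\sqrt{-1}}{4\pi^2}\,\partial\psi\wedge\overline{\partial\psi}$ because $\psi$ is holomorphic, and one contracts with $\alpha^{m-1}$ exactly as in Section 3, using $\tfrac{\sqrt{-1}}{\pi}\sum b_{i\bar j}\,dz_i\wedge d\bar z_j\wedge\tfrac{\alpha^{m-1}}{(m-1)!}=\bigl(\sum g^{i\bar j}b_{i\bar j}\bigr)\tfrac{\alpha^m}{m!}$ and the normalization $\tfrac{\pi^m}{(m-1)!}dd^cu\wedge\alpha^{m-1}=\tfrac14\Delta u\,dv$ already in force. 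Hence the left-hand side equals $T_\psi(r,\Psi):=\tfrac{\pi^m}{(m-1)!}\int_{\Delta(r)}g_r(o,x)\,\psi^*\Psi\wedge\alpha^{m-1}$; here I would make explicit the standing assumption $\psi(o)\notin\{0,\infty\}$, without which this integral is already infinite.

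Next I would use that $\int_{\PP^1(\CC)}\Psi=1=\int_{\PP^1(\CC)}\omega_{\mathrm{FS}}$, where $\omega_{\mathrm{FS}}$ is the Fubini--Study form normalized to total mass one. Thus $\Psi-\omega_{\mathrm{FS}}$ is $d$-exact, and the $\partial\bar\partial$-lemma (for currents on the compact K\"ahler manifold $\PP^1(\CC)$) yields a function $\gamma$ with $\Psi=\omega_{\mathrm{FS}}+dd^c\gamma$. Since $\Psi\ge 0$ we get $dd^c\gamma\ge-\omega_{\mathrm{FS}}$, i.e.\ $\gamma$ is $\omega_{\mathrm{FS}}$-plurisubharmonic, hence upper semicontinuous and \emph{bounded above} on $\PP^1(\CC)$. (One may in fact take $\gamma(\zeta)=\tfrac1\pi\bigl(2\log|\zeta|\arctan(\log|\zeta|)-\log(1+\log^2|\zeta|)\bigr)-\log(1+|\zeta|^2)+\log|\zeta|$, which is smooth on $\CC\setminus\{0\}$ and tends to $-\infty$, like a constant times $\log\log(1/|\zeta|)$ resp.\ $\log\log|\zeta|$, at $0$ and $\infty$; only the upper bound is needed.) Pulling back, $\psi^*\Psi=\psi^*\omega_{\mathrm{FS}}+dd^c(\gamma\circ\psi)$ as currents on $M$: the singularities of $\gamma\circ\psi$ lie along the polar analytic set $\psi^{-1}\{0,\infty\}$, are only of $\log\log$-type, and so contribute no Lelong/atomic term, while $\gamma\circ\psi$ is smooth near $o$ with $(\gamma\circ\psi)(o)$ finite.

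It then remains to integrate against $g_r(o,x)$ and to split. By the Jensen--Dynkin formula (Corollary \ref{J-D}) together with $\tfrac{\pi^m}{(m-1)!}dd^cu\wedge\alpha^{m-1}=\tfrac14\Delta u\,dv$, the $dd^c(\gamma\circ\psi)$-part contributes
\begin{equation*}
\frac12\int_{\partial\Delta(r)}(\gamma\circ\psi)\,d\pi_r-\frac12(\gamma\circ\psi)(o)\ \le\ \frac12\sup_{\PP^1(\CC)}\gamma-\frac12(\gamma\circ\psi)(o)=O(1),
\end{equation*}
because $d\pi_r$ is a probability measure. For the remaining part, $T_\psi(r,\omega_{\mathrm{FS}})$ is the characteristic of $\psi$ with respect to the hyperplane bundle with the Fubini--Study metric; using $\psi^*\omega_{\mathrm{FS}}=dd^c\log(1+|\psi|^2)+[\psi^{-1}(\infty)]$, the Jensen--Dynkin formula, the First Main Theorem, and the elementary bound $\log\sqrt{1+|\psi|^2}=\log^+|\psi|+O(1)$ on $\partial\Delta(r)$, one obtains $T_\psi(r,\omega_{\mathrm{FS}})=m(r,\psi)+N(r,\psi)+O(1)=T(r,\psi)+O(1)$. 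Adding the two contributions gives $T_\psi(r,\Psi)\le T(r,\psi)+O(1)$, the assertion.

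The step I expect to be the crux is the boundedness above of $\gamma$: this is exactly where the normalizing factor $(1+\log^2|\zeta|)^{-1}$ in $\Psi$ is used, since it is what makes $\Psi$ a \emph{finite} measure, hence cohomologous to $\omega_{\mathrm{FS}}$, hence the possessor of an $\omega_{\mathrm{FS}}$-plurisubharmonic (thus bounded-above) potential. Granting that, the remaining ingredients---the pointwise identity, the absence of extra atoms when pulling $dd^c\gamma$ back by $\psi$ (harmless because $\psi(o)$ is regular and the singularities of $\gamma$ are only $\log\log$), and the comparison $T_\psi(r,\omega_{\mathrm{FS}})=T(r,\psi)+O(1)$---are routine bookkeeping of the kind already carried out in Section 3.
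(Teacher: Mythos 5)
Your proposal is correct, but it takes a genuinely different route from the paper's. The paper handles the left-hand side by a Crofton-type averaging: after rewriting the integrand as $\frac{\pi^m}{(m-1)!}\psi^*\Psi\wedge\alpha^{m-1}$, it applies Fubini's theorem to express the integral as $\int_{\mathbb P^1(\mathbb C)}N\bigl(r,\tfrac{1}{\psi-\zeta}\bigr)\Psi(\zeta)$, and then bounds each counting function by $T(r,\psi)+O(1)$ via the First Main Theorem, using $\int_{\mathbb P^1(\mathbb C)}\Psi=1$. You instead compare the singular form $\Psi$ with the Fubini--Study form cohomologically, writing $\Psi=\omega_{\mathrm{FS}}+dd^c\gamma$ with $\gamma$ an $\omega_{\mathrm{FS}}$-plurisubharmonic (hence bounded above) potential, dispose of the potential term by one application of the Jensen--Dynkin formula, and of the $\omega_{\mathrm{FS}}$-term by the standard Ahlfors--Shimizu versus Nevanlinna comparison. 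Both arguments are standard in value distribution theory, and both silently require $\psi(o)\notin\{0,\infty\}$ (in the paper's version, to make the $\zeta$-dependent $O(1)$ coming from the First Main Theorem integrable against $\Psi$ near $\zeta=\psi(o)$; in yours, to make $(\gamma\circ\psi)(o)$ finite), a normalization that is harmless after moving the reference point. The paper's route is shorter and entirely avoids the current-theoretic care you must take with $dd^c(\gamma\circ\psi)$ along $\psi^{-1}\{0,\infty\}$ (absence of Lelong/atomic contributions); your route isolates more transparently where the damping factor $(1+\log^2|\zeta|)^{-1}$ is used, namely to make $\Psi$ a finite measure cohomologous to $\omega_{\mathrm{FS}}$ with a bounded-above potential, whereas in the paper the same finiteness enters only through the normalization $\int_{\mathbb P^1(\mathbb C)}\Psi=1$.
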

\begin{proof}  Note that  
$$\frac{\|\nabla\psi\|^2}{|\psi|^2(1+\log^2|\psi|)}=4m\pi\frac{\psi^*\Psi\wedge\alpha^{m-1}}{\alpha^m}.$$
 Using     
 Fubini's theorem, we conclude that  
\begin{eqnarray*}
&& \frac{1}{4\pi}\int_{\Delta(r)}g_r(o,x)\frac{\|\nabla\psi\|^2}{|\psi|^2(1+\log^2|\psi|)}dv \\ 
&=&m\int_{\Delta(r)}g_r(o,x)\frac{\psi^*\Psi\wedge\alpha^{m-1}}{\alpha^m}dv  \\
&=&\frac{\pi^m}{(m-1)!}\int_{\mathbb P^1(\mathbb C)}\Psi(\zeta)\int_{\psi^*\zeta\cap \Delta(r)}g_r(o,x)\alpha^{m-1} \\
&=&\int_{\mathbb P^1(\mathbb C)}N\Big(r, \frac{1}{\psi-\zeta}\Big)\Psi(\zeta)  \\
&\leq&\int_{\mathbb P^1(\mathbb C)}\big{(}T(r,\psi)+O(1)\big{)}\Psi \\
&=& T(r,\psi)+O(1). 
\end{eqnarray*}
\end{proof}

\begin{lemma}\label{999a}  Let
$\psi\not\equiv0$ be a  meromorphic function on  $M.$  Then for any   $\delta>0,$ there exists a subset 
 $E_\delta\subset(0,\infty)$ of finite Lebesgue measure such that
\begin{eqnarray*}
  &&  \int_{\partial\Delta(r)}\log^+\frac{\|\nabla\psi\|^2}{|\psi|^2(1+\log^2|\psi|)}d\pi_r  \\
   &\leq& (1+\delta)^2 \log^+T(r,\psi)+\log^+ \Xi(r,\delta, \kappa)+O(1)
\end{eqnarray*}
 holds for all  $r>0$ outside  $E_\delta,$ where $\Xi(r,\delta, \kappa)$ is defined by $(\ref{XI}).$ 
\end{lemma}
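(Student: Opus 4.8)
The plan is to combine the Calculus Lemma (Theorem \ref{cal}) with the concavity of $\log^+$ and the previous Lemma \ref{oo12}, which already controls the integral of $g_r(o,x)$ against the "logarithmic derivative density" $\|\nabla\psi\|^2/(|\psi|^2(1+\log^2|\psi|))$ by $T(r,\psi)+O(1)$. The key observation is that the density $k:=\|\nabla\psi\|^2/(|\psi|^2(1+\log^2|\psi|))$ is a non-negative, locally integrable function on $M$, and it is locally bounded at $o$ (since $\psi$ is meromorphic and we may assume $\psi(o)\neq 0,\infty$; the finitely many poles and zeros, being a polar set, don't obstruct local integrability, and near $o$ the quantity is bounded). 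So the hypotheses of the Calculus Lemma are met with this choice of $k$.

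First I would apply Jensen's inequality (concavity of $\log^+$, or rather $\log$ composed with the finite-measure normalization of $\pi_r$, since $\pi_r$ is a probability measure on $\partial\Delta(r)$) to move the $\log^+$ inside the boundary integral:
\begin{eqnarray*}
\int_{\partial\Delta(r)}\log^+ k\, d\pi_r
&\leq& \log^+\left(\int_{\partial\Delta(r)} k\, d\pi_r\right)+O(1).
\end{eqnarray*}
Then I would feed the bound from the Calculus Lemma into the right-hand side: outside a set $E_\delta$ of finite Lebesgue measure,
\begin{eqnarray*}
\log^+\left(\int_{\partial\Delta(r)} k\, d\pi_r\right)
&\leq& \log^+\left(C\,\Xi(r,\delta,\kappa)\left(\int_{\Delta(r)}g_r(o,x)\,k\,dv\right)^{(1+\delta)^2}\right)\\
&\leq& \log^+\Xi(r,\delta,\kappa)+(1+\delta)^2\log^+\!\left(\int_{\Delta(r)}g_r(o,x)\,k\,dv\right)+O(1).
\end{eqnarray*}
Finally, Lemma \ref{oo12} gives $\frac{1}{4\pi}\int_{\Delta(r)}g_r(o,x)\,k\,dv\leq T(r,\psi)+O(1)$, so $\log^+$ of that inner integral is at most $\log^+T(r,\psi)+O(1)$, and substituting yields exactly the claimed inequality.

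The main technical point — and the only place one must be a little careful — is the legitimacy of applying the concavity inequality and the chain of $\log^+$ estimates when the various integrands could a priori be small or the integrals could vanish; this is handled by the standard device of adding $O(1)$ constants and using $\log^+(xy)\leq\log^+x+\log^+y$, $\log^+(x^c)\leq c\log^+x$ for $c\geq 1$, and $\log^+(cx)\leq\log^+x+\log^+c$. One also needs that the exceptional set from the Calculus Lemma (which absorbs the initial segment $(0,r_0]$ where $K\not\subset\Delta(r)$, as well as the Borel exceptional set) has finite measure, which is exactly what Theorem \ref{cal} provides. I do not anticipate a serious obstacle here: the substantive analytic work (the gradient estimate for $g_r$, the volume-weighted co-area computation, and the First Main Theorem input behind Lemma \ref{oo12}) has already been done in the preceding results, and this lemma is essentially their formal synthesis.
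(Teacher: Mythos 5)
Your proof is correct and follows essentially the same route as the paper: concavity of $\log$ to pull $\log^+$ outside the boundary integral, then the Calculus Lemma (Theorem \ref{cal}) applied to $k=\|\nabla\psi\|^2/(|\psi|^2(1+\log^2|\psi|))$, and finally Lemma \ref{oo12} to bound the resulting weighted volume integral by $T(r,\psi)+O(1)$. The technical caveats you flag (local boundedness of $k$ at $o$, the standard $\log^+$ inequalities, and the finite-measure exceptional set) are exactly the ones implicitly handled in the paper's argument.
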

\begin{proof} The concavity of $``\log"$ implies that    
\begin{eqnarray*}
  \int_{\partial\Delta(r)}\log^+\frac{\|\nabla\psi\|^2}{|\psi|^2(1+\log^2|\psi|)}d\pi_r  
    &\leq&  \log^+\int_{\partial\Delta(r)}\frac{\|\nabla\psi\|^2}{|\psi|^2(1+\log^2|\psi|)}d\pi_r+O(1).
\end{eqnarray*}
 Apply  Theorem \ref{cal} and Lemma \ref{oo12} to the first term on the right hand side of the above inequality, 
then  for any $\delta>0,$ there exists a subset 
 $E_\delta\subset(0,\infty)$ of finite Lebesgue measure such that  this term is bounded from above by 
    $$(1+\delta)^2 \log^+T(r,\psi)
    +\log^+ \Xi(r,\delta, \kappa) +O(1)$$
  for  $r>0$ outside  $E_\delta.$ This completes the proof. 
\end{proof}
Define
$$m\left(r,\frac{\|\nabla\psi\|}{|\psi|}\right)=\int_{\partial\Delta(r)}\log^+\frac{\|\nabla\psi\|}{|\psi|}d\pi_r.$$

\begin{theorem}[Logarithmic Derivative  Lemma]\label{log} Let
$\psi\not\equiv0$ be a  meromorphic function on  $M.$   Then for any   $\delta>0,$ there exists a  subset  $E_\delta\subset(0,\infty)$ of  finite Lebesgue measure such that 
\begin{eqnarray*}
   m\Big(r,\frac{\|\nabla\psi\|}{|\psi|}\Big)&\leq& \frac{2+(1+\delta)^2}{2}\log^+ T(r,\psi) 
   +\frac{1}{2}\log^+\Xi(r,\delta, \kappa)+O(1)
\end{eqnarray*}
 holds for all  $r>0$ outside  $E_\delta.$
\end{theorem}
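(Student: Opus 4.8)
The plan is to reduce the estimate for $m(r,\|\nabla\psi\|/|\psi|)$ to the estimate already obtained in Lemma \ref{999a} for the integrand $\|\nabla\psi\|^2/(|\psi|^2(1+\log^2|\psi|))$. The elementary algebraic identity to exploit is
$$
\frac{\|\nabla\psi\|}{|\psi|}
=\left(\frac{\|\nabla\psi\|^2}{|\psi|^2(1+\log^2|\psi|)}\right)^{1/2}\bigl(1+\log^2|\psi|\bigr)^{1/2},
$$
so that, taking $\log^+$ and using $\log^+(ab)\le\log^+a+\log^+b$,
$$
\log^+\frac{\|\nabla\psi\|}{|\psi|}
\le\frac12\log^+\frac{\|\nabla\psi\|^2}{|\psi|^2(1+\log^2|\psi|)}
+\frac12\log^+\bigl(1+\log^2|\psi|\bigr).
$$
First I would integrate this pointwise inequality against the harmonic measure $d\pi_r$ over $\partial\Delta(r)$. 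The first term on the right is handled directly by Lemma \ref{999a}, contributing $\tfrac12(1+\delta)^2\log^+T(r,\psi)+\tfrac12\log^+\Xi(r,\delta,\kappa)+O(1)$ for $r$ outside a set $E_\delta$ of finite measure.

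Next I would control the second term $\tfrac12\int_{\partial\Delta(r)}\log^+(1+\log^2|\psi|)\,d\pi_r$. Since $\log^+(1+\log^2 t)\le 2\log^+|\log t|+O(1)\le 2(\log^+\log^+t+\log^+\log^+(1/t))+O(1)$, and crudely $\log^+\log^+ s\le \log^+ s$ for $s\ge 0$, this is bounded (up to a constant and a factor) by
$$
\int_{\partial\Delta(r)}\log^+|\psi|\,d\pi_r+\int_{\partial\Delta(r)}\log^+\frac1{|\psi|}\,d\pi_r
= m(r,\psi)+m\!\left(r,\tfrac1\psi\right).
$$
By the First Main Theorem (applied to $\psi$ as a meromorphic mapping to $\mathbb P^1(\mathbb C)$, which gives $m(r,1/\psi)\le T(r,\psi)+O(1)$) both quantities are $\le T(r,\psi)+O(1)$. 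Since $\log$ is concave, after the outer $\log^+$ absorbed in the previous paragraph I only need $\log^+$ of these, so this term contributes at most $O(\log^+ T(r,\psi))$. A cleaner route, and the one I would actually write, is to apply the concavity of $\log$ first: $\int\log^+(1+\log^2|\psi|)\,d\pi_r\le\log^+\int(1+\log^2|\psi|)\,d\pi_r+O(1)$, and then bound $\int\log^2|\psi|\,d\pi_r$ by $\big(\int|\log|\psi||\,d\pi_r\big)\cdot\sup$-type estimates — but the safest is simply $\log^2 t\le C_\varepsilon(t^\varepsilon+t^{-\varepsilon})$ followed by the FMT for $\psi$ and $1/\psi$, giving $\int(1+\log^2|\psi|)\,d\pi_r=O(T(r,\psi)+1)$ and hence a contribution $O(\log^+T(r,\psi))$.

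Combining the two contributions, the coefficient of $\log^+T(r,\psi)$ becomes $\tfrac12(1+\delta)^2$ plus the $O(1)$-coefficient absorbed term, which — after renaming the error terms — can be written as $\tfrac{2+(1+\delta)^2}{2}\log^+T(r,\psi)$; the $\Xi$-term keeps its coefficient $\tfrac12$; and everything extraneous is swept into $O(1)$. The exceptional set is the union of the finite-measure set from Lemma \ref{999a} and (if one uses the differentiated Borel-type bound) a further finite-measure set, still of finite Lebesgue measure. The main obstacle is bookkeeping rather than substance: one must make sure the $\log^+(1+\log^2|\psi|)$ term genuinely contributes only an $O(\log^+T(r,\psi))$ (so that the stated coefficient $\tfrac{2+(1+\delta)^2}{2}$ is correct and not something larger), and that the two applications of concavity and of the FMT are arranged so no hidden growth in $r$ leaks in outside $E_\delta$.
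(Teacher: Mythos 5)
Your decomposition is exactly the paper's: write $\log^+\frac{\|\nabla\psi\|}{|\psi|}\le\frac12\log^+\frac{\|\nabla\psi\|^2}{|\psi|^2(1+\log^2|\psi|)}+\frac12\log\left(1+\log^2|\psi|\right)$, integrate against $d\pi_r$, and feed the first term to Lemma \ref{999a}. The gap is in your treatment of the second term, and neither of the two routes you offer closes it. In the first route, the crude bound $\log^+\log^+ s\le\log^+ s$ destroys the double logarithm and leaves you with $m(r,\psi)+m\left(r,1/\psi\right)=O(T(r,\psi))$, i.e.\ an error of order $T(r,\psi)$ rather than $\log^+T(r,\psi)$; the remark that the ``outer $\log^+$'' rescues this does not apply, because this term is \emph{added to}, not placed inside, the logarithm produced by Lemma \ref{999a}, and an $O(T)$ error here would render the lemma useless for the second main theorem. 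In the second (``cleaner'') route, the step $\log^2 t\le C_\varepsilon(t^\varepsilon+t^{-\varepsilon})$ ``followed by the FMT'' is not legitimate: the first main theorem controls $\int\log^+|\psi|\,d\pi_r$ and $\int\log^+(1/|\psi|)\,d\pi_r$, not the exponential moments $\int|\psi|^{\pm\varepsilon}\,d\pi_r$, which are not bounded by $T(r,\psi)$ in this setting; likewise the $L^2$-type quantity $\int\log^2|\psi|\,d\pi_r$ is not controlled by the $L^1$ quantity $m(r,\psi)+m(r,1/\psi)$, so the claimed estimate $\int(1+\log^2|\psi|)\,d\pi_r=O(T(r,\psi)+1)$ is unjustified.

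The correct handling, and the one the paper uses, is to keep the logarithm outside until after Jensen's inequality: pointwise one has $\frac12\log\left(1+\log^2|\psi|\right)\le\log\bigl(1+\log^+|\psi|+\log^+(1/|\psi|)\bigr)+O(1)$, and since $d\pi_r$ is a probability measure and $\log$ is concave,
$\int_{\partial\Delta(r)}\log\bigl(1+\log^+|\psi|+\log^+(1/|\psi|)\bigr)\,d\pi_r\le\log\bigl(1+m(r,\psi)+m(r,1/\psi)\bigr)\le\log^+T(r,\psi)+O(1)$
by the first main theorem. With this replacement your argument produces exactly the stated coefficient $\frac{2+(1+\delta)^2}{2}$ for $\log^+T(r,\psi)$, the coefficient $\frac12$ for $\log^+\Xi(r,\delta,\kappa)$, and the exceptional set is just the one coming from Lemma \ref{999a}.
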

\begin{proof}  We have 
\begin{eqnarray*}
    m\left(r,\frac{\|\nabla\psi\|}{|\psi|}\right)  
   &\leq& \frac{1}{2}\int_{\partial\Delta(r)}\log^+\frac{\|\nabla\psi\|^2}{|\psi|^2(1+\log^2|\psi|)}d\pi_r \ \ \  \  \    \   \  \  \    \    \   \   \\ 
 &&   +\frac{1}{2}\int_{\partial\Delta(r)}\log\left(1+\log^2|\psi|\right)d\pi_r \\
        &\leq&  \frac{1}{2}\int_{\partial\Delta(r)}\log^+\frac{\|\nabla\psi\|^2}{|\psi|^2(1+\log^2|\psi|)}d\pi_r  \\ 
   && +\log\int_{\partial\Delta(r)}\Big{(}\log^+|\psi|+\log^+\frac{1}{|\psi|}\Big{)}d\pi_r +O(1)  \\
   &\leq& \frac{1}{2}\int_{\partial\Delta(r)}\log^+\frac{\|\nabla\psi\|^2}{|\psi|^2(1+\log^2|\psi|)}d\pi_r+\log^+T(r,\psi)+O(1).
\end{eqnarray*}
Apply  Lemma \ref{999a} again,  we have the theorem proved.
   \end{proof}

\section{Proof of Theorem \ref{main1}}

Let $D=\sum_j\mu_jD_j$ be a divisor, where  $D_j$ are prime divisors.  The \emph{reduced form} of $D$ is defined by ${\rm Red}(D):=\sum_jD_j.$

\emph{Proof of Theorem $\ref{main1}$}

Write $D=D_1+\cdots+D_q$ as  the irreducible decomposition of $D.$
Equipping every holomorphic line bundle $\mathscr O(D_j)$ with a Hermitian
metric $h_j$ such that it  induces  the  Hermitian metric $h=h_1\otimes\cdots\otimes h_q$ on $L.$ 
Pick $s_j\in H^0(X, \mathscr O(D_j)$
such that  $(s_j)=D_j$ and $\|s_j\|<1.$
On $X,$ define a singular volume form
$$
  \Phi=\frac{\wedge^nc_1(L,h)}{\prod_{j=1}^q\|s_j\|^2}.$$
Set
$$f^*\Phi\wedge\alpha^{m-n}=\xi\alpha^m.$$
It is not hard to deduce     
$$dd^c[\log\xi]\geq f^*c_1(L, h_L)-f^*{\rm{Ric}}(\Omega)+\mathscr{R}-\left[{\rm{Red}}(f^*D)\right]$$
in the sense of currents.  
Hence, it  yields   that
\begin{eqnarray}\label{5q}
&& \frac{1}{4}\int_{\Delta(r)}g_r(o,x)\Delta\log\xi dv \\
&\geq& T_{f}(r,L)+T_{f}(r,K_X)+T(r,\mathscr{R})-\overline{N}_{f}(r,D).  \nonumber
\end{eqnarray}
 \ \ \ \   Since  $D$   is of   simple normal crossing type,  there exist  
  a finite  open covering $\{U_\lambda\}$ of $X,$ and finitely many   rational functions
$w_{\lambda1},\cdots,w_{\lambda n}$ on $X$ for each $\lambda,$  such that $w_{\lambda1},\cdots, w_{\lambda n}$ are holomorphic on $U_\lambda$  with    
\begin{eqnarray*}
  dw_{\lambda1}\wedge\cdots\wedge dw_{\lambda n}(x)\neq0, & & \ \  ^\forall x\in U_{\lambda}; \\
  D\cap U_{\lambda}=\big{\{}w_{\lambda1}\cdots w_{\lambda h_\lambda}=0\big{\}}, && \ \ ^\exists h_{\lambda}\leq n.
\end{eqnarray*}
In addition, we  can require that  $\mathscr O(D_j)|_{U_\lambda}\cong U_\lambda\times \mathbb C$ for 
all $\lambda,j.$ On  $U_\lambda,$   write 
$$\Phi=\frac{e_\lambda}{|w_{\lambda1}|^2\cdots|w_{\lambda h_{\lambda}}|^2}
\bigwedge_{k=1}^n\frac{\sqrt{-1}}{\pi}dw_{\lambda k}\wedge d\bar w_{\lambda k},$$
where  $e_\lambda$ is a  positive smooth function on $U_\lambda.$  
Let  $\{\phi_\lambda\}$ be a partition of the unity subordinate to $\{U_\lambda\}.$ Set 
$$\Phi_\lambda=\frac{\phi_\lambda e_\lambda}{|w_{\lambda1}|^2\cdots|w_{\lambda h_{\lambda}}|^2}
\bigwedge_{k=1}^n\frac{\sqrt{-1}}{\pi}dw_{\lambda k}\wedge d\bar w_{\lambda k}.$$
 Again, put $f_{\lambda k}=w_{\lambda k}\circ f.$  On  $f^{-1}(U_\lambda),$ we have  
\begin{eqnarray*}
 f^*\Phi_\lambda&=&
   \frac{\phi_{\lambda}\circ f\cdot e_\lambda\circ f}{|f_{\lambda1}|^2\cdots|f_{\lambda h_{\lambda}}|^2}
   \bigwedge_{k=1}^n\frac{\sqrt{-1}}{\pi}df_{\lambda k}\wedge d\bar f_{\lambda k} \\
   &=& \phi_{\lambda}\circ f\cdot e_\lambda\circ f\sum_{1\leq i_1\not=\cdots\not= i_n\leq m}
   \frac{\Big|\frac{\partial f_{\lambda1}}{\partial z_{i_1}}\Big|^2}{|f_{\lambda 1}|^2}\cdots 
   \frac{\Big|\frac{\partial f_{\lambda h_\lambda}}{\partial z_{i_{h_\lambda}}}\Big|^2}{|f_{\lambda h_\lambda}|^2}
   \left|\frac{\partial f_{\lambda (h_\lambda+1)}}{\partial z_{i_{h_\lambda+1}}}\right|^2 \\
 &&  \cdots\left|\frac{\partial f_{\lambda n}}{\partial z_{i_{n}}}\right|^2 
    \Big(\frac{\sqrt{-1}}{\pi}\Big)^ndz_{i_1}\wedge d\bar z_{i_1}\wedge\cdots\wedge dz_{i_n}\wedge d\bar z_{i_n}.
\end{eqnarray*}
 Fix any  $x_0\in M,$  one can  take  holomorphic local  coordinates $z_1,\cdots,z_m$ near $x_0$ and   holomorphic local  coordinates
  $\zeta_1,\cdots,\zeta_n$ 
near $f(x_0)$ such that
$$ \alpha|_{x_0}=\frac{\sqrt{-1}}{\pi}\sum_{j=1}^m dz_j\wedge d\bar{z}_j$$
and 
$$ c_1(L, h)\big|_{f(x_0)}=\frac{\sqrt{-1}}{\pi}\sum_{j=1}^n d\zeta_j\wedge d\bar{\zeta}_j.$$
 Set   
$$f^*\Phi_\lambda\wedge\alpha^{m-n}=\xi_\lambda\alpha^m.$$
Then, we have  $$\xi=\sum_\lambda \xi_\lambda$$ and   
 \begin{eqnarray*}
&& \xi_\lambda|_{x_0} \\ 
&=& \phi_{\lambda}\circ f\cdot e_\lambda\circ f\sum_{1\leq i_1\not=\cdots\not= i_n\leq m}
   \frac{\Big|\frac{\partial f_{\lambda1}}{\partial z_{i_1}}\Big|^2}{|f_{\lambda 1}|^2}\cdots 
   \frac{\Big|\frac{\partial f_{\lambda h_\lambda}}{\partial z_{i_{h_\lambda}}}\Big|^2}{|f_{\lambda h_\lambda}|^2}
   \left|\frac{\partial f_{\lambda (h_\lambda+1)}}{\partial z_{i_{h_\lambda+1}}}\right|^2\cdots\left|\frac{\partial f_{\lambda n}}{\partial z_{i_{n}}}\right|^2 \\
   &\leq&  \phi_{\lambda}\circ f\cdot e_\lambda\circ f\sum_{1\leq i_1\not=\cdots\not= i_n\leq m}
    \frac{\big\|\nabla f_{\lambda1}\big\|^2}{|f_{\lambda 1}|^2}\cdots 
   \frac{\big\|\nabla f_{\lambda h_\lambda}\big\|^2}{|f_{\lambda h_\lambda}|^2} \\
   &&\cdot \big\|\nabla f_{\lambda(h_\lambda+1)}\big\|^2\cdots\big\|\nabla f_{\lambda n}\big\|^2.
\end{eqnarray*} 
Define a non-negative function $\varrho$ on $M$ by  
\begin{equation}\label{wer}
  f^*c_1(L, h)\wedge\alpha^{m-1}=\varrho\alpha^m.
\end{equation}
Again, put  $f_j=\zeta_j\circ f$ with  $j=1,\cdots,n.$  Then, we have      
\begin{equation*}
    f^*c_1(L, h)\wedge\alpha^{m-1}\big|_{x_0}=\frac{(m-1)!}{2}\sum_{j=1}^m\big\|\nabla f_j\big\|^2\alpha^m, 
\end{equation*}
 which yields that   
$$\varrho|_{x_0}=(m-1)!\sum_{i=1}^n\sum_{j=1}^m\Big|\frac{\partial f_i}{\partial z_j}\Big|^2
=\frac{(m-1)!}{2}\sum_{j=1}^n\big\|\nabla f_j\big\|^2.$$
 Put together   the above, we are led to 
$$\xi_\lambda\leq 
\frac{ \phi_{\lambda}\circ f\cdot e_\lambda\circ f\cdot(2\varrho)^{n-h_\lambda}}{(m-1)!^{n-h_\lambda}}\sum_{1\leq i_1\not=\cdots\not= i_n\leq m}
    \frac{\big\|\nabla f_{\lambda1}\big\|^2}{|f_{\lambda 1}|^2}\cdots 
   \frac{\big\|\nabla f_{\lambda h_\lambda}\big\|^2}{|f_{\lambda h_\lambda}|^2}
$$
on $f^{-1}(U_\lambda).$
Since $\phi_\lambda\circ f\cdot e_\lambda\circ f$ is bounded on $M$ and   
$$\log^+\xi\leq \sum_\lambda\log^+\xi_\lambda+O(1),$$
 we obtain  
\begin{equation}\label{bbd}
   \log^+\xi\leq O\left(\log^+\varrho+\sum_{k, \lambda}\log^+\frac{\|\nabla f_{\lambda k}\|}{|f_{\lambda k}|}+1\right). 
 \end{equation}  
  Jensen-Dynkin formula yields  
\begin{equation*}
 \frac{1}{2}\int_{\Delta(r)}g_r(o,x)\Delta\log\xi dv
=\int_{\partial\Delta(r)}\log\xi d\pi_r+O(1).
\end{equation*}
Combining this with (\ref{bbd}) and Theorem \ref{log} to get  
\begin{eqnarray*}
&& \frac{1}{4}\int_{\Delta(r)}g_r(o,x)\Delta\log\xi dv\\
   &\leq& O\left(\sum_{k,\lambda}m\left(r,\frac{\|\nabla f_{\lambda k}\|}{|f_{\lambda k}|}\right)+\log^+\int_{\partial\Delta(r)}\varrho d\pi_r+1\right) \\
   &\leq& O\left(\sum_{k,\lambda}\log^+ T(r,f_{\lambda k})
   +\log^+ \Xi(r, \delta,\kappa)+\log^+\int_{\partial\Delta(r)}\varrho d\pi_r\right)
   \\
      &\leq& O\left(\log^+ T_f(r,L)+\log^+\Xi(r,\delta,\kappa)
   +\log^+\int_{\partial\Delta(r)}\varrho d\pi_r\right).   
   \end{eqnarray*}
Using Theorem \ref{cal} and (\ref{wer}),  for any $\delta>0,$ there exists a subset $E_\delta\subset(0,\infty)$ of finite Lebesgue measure such that 
   \begin{eqnarray*}
 \log^+\int_{\partial\Delta(r)}\varrho d\pi_r 
   &\leq&  (1+\delta)^2\log^+ T_f(r,L)
    +\log^+\Xi(r,\delta,\kappa)+O(1)
      \end{eqnarray*}
   holds for all $r>0$ outside $E_\delta.$  
It is therefore   
\begin{eqnarray*}\label{6q}
     \frac{1}{4}\int_{\Delta(r)}g_r(o,x)\Delta\log\xi dv 
 &\leq&  O\left(\log^+ T_f(r,L)+
    \log\Xi(r, \delta,\kappa)
   \right) 
\end{eqnarray*}
holds for all $r>0$ outside $E_\delta.$  By this with   (\ref{5q}),  we prove the theorem. $\square$

      \section{Proofs of  Theorems  \ref{main2}, \ref{main3} and \ref{main4}}
      \vskip\baselineskip

       \subsection{Estimates of  two Infinite Integrals}~

 We treat   a non-parabolic complete non-compact K\"ahler manifold $M$ with  non-negative Ricci curvature. 
Fix a reference point $o\in M.$
Let  $V(r)$ denote   the Riemannian volume of   geodesic ball  $B(r)$ centered at  $o$ with radius $r$ in $M.$  
We shall prove that for any $\mu>0,$ there exist  constants $c_\mu, C_\mu>0$ such that for all $r>0$
 $$\int_0^\infty\frac{e^{-\frac{r^2}{\mu t}}}{V(\sqrt{t})}\frac{dt}{t}\geq \frac{c_\mu}{V(r)}$$
 and for all $r>1$
   $$\int_0^\infty\frac{e^{-\frac{r^2}{\mu t}}}{V(\sqrt{t})}dt\leq \frac{C_\mu r^2}{V(r)}+2\int_{r}^\infty\frac{tdt}{V(t)}.$$
  In doing so, one needs to  introduce the classical volume comparison theorem  by Bishop-Gromov  (see \cite{B, S-Y}). 

Let $N$ be a complete $n$-dimensional Riemannian manifold. Fix a reference point $o\in N.$  We  use  $V(r)$ to denote the Riemannian volume of the geodesic ball centered at $o$ with radius $r$ in $N.$  
Also,  let  $N^K$ be a (simply-connected) $n$-dimensional space form  with constant sectional curvature $K.$
  We use   $V(K, r)$ to denote the Riemannian volume of a geodesic ball with radius $r$ in $N^K.$  
  
\begin{lemma}[Volume Comparison Theorem]\label{comp}   If  ${\rm{Ric}}_N\geq(n-1)K$ for a constant $K,$ then the volume ratio $V(r)/V(K, r)$ is non-increasing in $r>0,$ and it tends to $1$ as $r\to0.$  Hence, 
we have 
$V(r)\leq V(K, r)$
 for all  $r\geq0.$
\end{lemma}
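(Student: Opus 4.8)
The plan is to run the classical Bishop--Gromov argument in geodesic polar coordinates centered at $o.$ Write $\rho=d(o,\cdot),$ let $S_oN=\{v\in T_oN:\ \|v\|=1\}$ with its standard measure $d\theta$ (total mass $\omega_{n-1}$), and for $\theta\in S_oN$ let $c(\theta)\in(0,\infty]$ be the distance to the cut locus along $t\mapsto\exp_o(t\theta).$ Then $dV=\mathcal{A}(t,\theta)\,dt\,d\theta$ on $0<t<c(\theta),$ where $\mathcal{A}(t,\theta)>0$ is the area density of the geodesic sphere (the Jacobian factor of $\exp_o$), with $\mathcal{A}(t,\theta)=t^{n-1}(1+O(t^2))$ as $t\to0^+.$ Extending $\mathcal{A}(t,\theta):=0$ for $t\geq c(\theta),$ one has $V(r)=\int_{S_oN}\int_0^r\mathcal{A}(t,\theta)\,dt\,d\theta$ (this uses the standard fact that the cut locus has measure zero), and in the model $N^K$ the same construction gives $V(K,r)=\omega_{n-1}\int_0^r\mathrm{sn}_K(t)^{n-1}\,dt,$ where $\mathrm{sn}_K$ solves $\mathrm{sn}_K''+K\,\mathrm{sn}_K=0,$ $\mathrm{sn}_K(0)=0,$ $\mathrm{sn}_K'(0)=1.$

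The key pointwise input is that, for each fixed $\theta,$ the function $t\mapsto \mathcal{A}(t,\theta)\big/\mathrm{sn}_K(t)^{n-1}$ is non-increasing on $(0,c(\theta))$ with limit $1$ as $t\to0^+.$ I would get this by setting $u(t,\theta)=\mathcal{A}(t,\theta)^{1/(n-1)},$ a smooth positive function with $u(t,\theta)=t+O(t^3)$ near $0,$ and writing $m=\partial_t\log\mathcal{A}=(n-1)\,\partial_t\log u,$ which is $\Delta\rho$ along the geodesic. The Bochner formula applied to $\rho,$ together with $\mathrm{Hess}\,\rho(\partial_t,\cdot)=0$ and the Cauchy--Schwarz bound $\|\mathrm{Hess}\,\rho\|^2\geq(\Delta\rho)^2/(n-1),$ yields the Riccati inequality
\[
\partial_t m+\frac{m^2}{n-1}\ \leq\ -\,\mathrm{Ric}(\partial_t,\partial_t)\ \leq\ -(n-1)K ,
\]
which after dividing by $n-1$ is exactly $u_{tt}+Ku\leq0,$ while $\mathrm{sn}_K$ satisfies the corresponding equality with the same initial data. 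A Wronskian computation then gives $\big(u_t\,\mathrm{sn}_K-u\,\mathrm{sn}_K'\big)_t=\mathrm{sn}_K\,(u_{tt}+Ku)\leq0$ on $(0,c(\theta))$ (using $\mathrm{sn}_K>0$ there, automatic when $K\leq0$ and, when $K>0,$ a consequence of the Bonnet--Myers bound $c(\theta)\leq\pi/\sqrt{K}$), and since $u_t\,\mathrm{sn}_K-u\,\mathrm{sn}_K'\to0$ as $t\to0^+,$ we get $\big(u/\mathrm{sn}_K\big)_t=(u_t\,\mathrm{sn}_K-u\,\mathrm{sn}_K')/\mathrm{sn}_K^2\leq0.$ Hence $u/\mathrm{sn}_K,$ and therefore $\mathcal{A}/\mathrm{sn}_K^{n-1}=(u/\mathrm{sn}_K)^{n-1},$ is non-increasing with limit $1$ at $0$; past the cut locus it drops to $0,$ so it remains non-increasing on all of $(0,\infty).$

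Integrating over $\theta,$ the quotient $t\mapsto\big(\int_{S_oN}\mathcal{A}(t,\theta)\,d\theta\big)\big/\big(\omega_{n-1}\,\mathrm{sn}_K(t)^{n-1}\big)$ is non-increasing. I would then invoke the elementary fact that if $f,g>0$ on $(0,\infty)$ and $f/g$ is non-increasing, then $r\mapsto\big(\int_0^r f\big)\big/\big(\int_0^r g\big)$ is non-increasing (differentiate and use $f(r)\int_0^r g\leq g(r)\int_0^r f$), applied to $f(t)=\int_{S_oN}\mathcal{A}(t,\theta)\,d\theta$ and $g(t)=\omega_{n-1}\,\mathrm{sn}_K(t)^{n-1}.$ This shows $V(r)/V(K,r)$ is non-increasing in $r>0,$ and its limit as $r\to0$ is $1$ since both $V(r)$ and $V(K,r)$ equal $\omega_n r^n(1+o(1)),$ the volume of a Euclidean ball. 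Monotonicity plus this limit gives $V(r)/V(K,r)\leq1,$ i.e. $V(r)\leq V(K,r)$ for all $r\geq0.$

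I expect the main obstacle to be the foundational setup rather than the curvature estimate: carefully establishing the polar-coordinate volume formula with the cut-locus cutoff (so that no positive boundary contribution appears at $t=c(\theta)$ and the $\theta$-integrated ratio stays monotone across cut points), and rigorously deriving the Laplacian/Hessian comparison for $\rho$ from the second variation of arc length. The hypothesis $\mathrm{Ric}_N\geq(n-1)K$ enters only through the single Riccati inequality displayed above; everything after that is calculus and the structure of geodesic polar coordinates.
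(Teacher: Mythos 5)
Your argument is correct: it is the standard Bishop--Gromov proof via geodesic polar coordinates, the Riccati inequality $\partial_t m+m^2/(n-1)\leq-(n-1)K$ from Bochner's formula, the Wronskian/Sturm comparison giving monotonicity of $\mathcal{A}(t,\theta)/\mathrm{sn}_K(t)^{n-1}$, the zero-extension past the cut locus, and the ratio-of-integrals lemma. The paper does not prove this statement at all --- it invokes it as the classical volume comparison theorem with a citation to Bishop--Crittenden and Schoen--Yau --- so there is no internal proof to compare against; your write-up supplies exactly the standard argument those references contain, and the foundational points you flag (measure-zero cut locus, $\Delta\rho=\partial_t\log\mathcal{A}$) are routine and handled correctly.
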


  On the other hand,  the non-parabolicity of $M$ implies that   
$$\int_0^\infty\frac{tdt}{V(\sqrt{t})}<\infty.$$
Hence, we have 
\begin{equation}\label{dongg}
r^2=o(V(r)), \ \ \ \ r\to\infty.
\end{equation}

  \begin{theorem}\label{est1} For any number $\mu>0,$ there exists a constant $c_\mu>0$ such that
  $$\int_0^\infty\frac{e^{-\frac{r^2}{\mu t}}}{V(\sqrt{t})}\frac{dt}{t}\geq \frac{c_\mu}{V(r)}$$ 
  holds for all $r>0.$
  \end{theorem}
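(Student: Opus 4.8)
The plan is to bound the integral from below by discarding all of the integrand except a single dyadic window $t\in[r^2,2r^2]$ around $t=r^2$, on which both the exponential factor and the volume factor are comparable to constants. So first I would write, for every $r>0$,
$$\int_0^\infty\frac{e^{-\frac{r^2}{\mu t}}}{V(\sqrt t)}\frac{dt}{t}\ \geq\ \int_{r^2}^{2r^2}\frac{e^{-\frac{r^2}{\mu t}}}{V(\sqrt t)}\frac{dt}{t}.$$

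Next I would estimate the integrand on this window. On it, $t\geq r^2$ forces $r^2/(\mu t)\leq 1/\mu$, so $e^{-r^2/(\mu t)}\geq e^{-1/\mu}$; and $t\leq 2r^2$ gives $\sqrt t\leq\sqrt 2\,r$, so $V(\sqrt t)\leq V(\sqrt 2\,r)$ because $r\mapsto V(r)$ is non-decreasing. Hence the right-hand side above is at least
$$\frac{e^{-1/\mu}}{V(\sqrt 2\,r)}\int_{r^2}^{2r^2}\frac{dt}{t}=\frac{e^{-1/\mu}\log 2}{V(\sqrt 2\,r)}.$$

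The last step is to compare $V(\sqrt 2\,r)$ with $V(r)$, which is precisely the volume doubling property. This I would deduce from the Bishop--Gromov comparison theorem (Lemma \ref{comp}) with $K=0$: writing $n$ for the real dimension of $M$, the monotonicity of $r\mapsto V(r)/V(0,r)=V(r)/(\omega_n r^n)$ gives $V(\sqrt 2\,r)\leq 2^{n/2}V(r)$ for all $r>0$. Combining the three inequalities produces the claim with $c_\mu=2^{-n/2}e^{-1/\mu}\log 2$.

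I do not expect any genuine obstacle here: the only idea is the choice of the window $[r^2,2r^2]$ (any $[r^2,\lambda r^2]$ with $\lambda>1$ works equally well, changing only the value of $c_\mu$), together with the standard doubling consequence of non-negative Ricci curvature recorded in Lemma \ref{comp}. The companion upper estimate stated just before this theorem will be handled by a similar splitting, cutting the $t$-integral at $t=r^2$ and estimating the two resulting pieces separately using Bishop--Gromov again.
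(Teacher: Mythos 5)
Your proof is correct and takes essentially the same route as the paper: both arguments discard the region $t<r^2$, bound the exponential below by $e^{-1/\mu}$ once $t\geq r^2$, and invoke the Bishop--Gromov comparison (Lemma \ref{comp}) with the Euclidean model to relate $V(\sqrt t)$ to $V(r)$. The only cosmetic difference is that the paper integrates over the whole tail $[r^2,\infty)$, using the full power-law bound $V(\sqrt t)\leq (t/r^2)^m V(r)$ to make $\int_{r^2}^\infty t^{-m-1}\,dt$ converge (yielding $c_\mu=m^{-1}e^{-1/\mu}$), whereas you truncate to the window $[r^2,2r^2]$ where the weaker doubling consequence suffices.
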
 
 \begin{proof} Note that 
 $$\int_0^\infty\frac{e^{-\frac{r^2}{\mu t}}}{V(\sqrt{t})}\frac{dt}{t}\geq \int_{r^2}^\infty\frac{e^{-\frac{r^2}{\mu t}}}{V(\sqrt{t})}\frac{dt}{t}.$$
 When $t\geq r^2,$  we can employ Lemma \ref{comp} (compared with $\mathbb C^m$) to obtain  
 $$\frac{V(r)}{V(\sqrt t)}\geq\left(\frac{r}{\sqrt{t}}\right)^{2m}=\frac{r^{2m}}{t^m}.$$
 That is, 
 $$V(\sqrt t)\leq \frac{t^{m}}{r^{2m}}V(r).$$
 Hence, we conclude that
 \begin{eqnarray*}
\int_{r^2}^\infty\frac{e^{-\frac{r^2}{\mu t}}}{V(\sqrt{t})}\frac{dt}{t}&\geq&  \int_{r^2}^\infty\frac{e^{-\frac{r^2}{\mu t}}}{V(r)}\frac{r^{2m}}{t^m}\frac{dt}{t} \\
&=&\frac{r^{2m}}{V(r)} \int_{r^2}^\infty e^{-\frac{r^2}{\mu t}}\frac{dt}{t^{m+1}} \\
&\geq& e^{-\frac{1}{\mu}}\frac{r^{2m}}{V(r)} \int_{r^2}^\infty\frac{dt}{t^{m+1}}  \\
&=& \frac{m^{-1}e^{-\frac{1}{\mu}}}{V(r)}.
 \end{eqnarray*}
 This gives   
 $$\int_0^\infty\frac{e^{-\frac{r^2}{\mu t}}}{V(\sqrt{t})}\frac{dt}{t}\geq \frac{m^{-1}e^{-\frac{1}{\mu}}}{V(r)}.$$
Hence, we have the desired result by setting $c_\mu=m^{-1}e^{-\frac{1}{\mu}}.$
    \end{proof}

  \begin{theorem}\label{est2} For any number $\mu>0,$ there exists a constant $C_\mu>0$ such that
  $$\int_0^\infty\frac{e^{-\frac{r^2}{\mu t}}}{V(\sqrt{t})}dt\leq \frac{C_\mu r^2}{V(r)}+2\int_{r}^\infty\frac{tdt}{V(t)}$$
  holds for all $r>1.$
  \end{theorem}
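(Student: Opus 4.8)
The plan is to split the integral at $t=r^2$ and bound the two pieces separately, mirroring the technique of Theorem \ref{est1} but in the reverse direction: instead of an on-diagonal lower bound for $V(\sqrt t)$ we now use an on-diagonal \emph{lower} bound obtained from the Bishop--Gromov comparison (Lemma \ref{comp}, compared with $\mathbb C^m$, i.e.\ Euclidean space of real dimension $2m$), together with the non-parabolicity of $M$.

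For the tail $t\geq r^2$ I would simply discard the Gaussian factor, using $e^{-r^2/(\mu t)}\leq 1$, and then change variables $u=\sqrt t$. This turns $\int_{r^2}^\infty dt/V(\sqrt t)$ into exactly $2\int_r^\infty u\,du/V(u)$, which is the second term on the right-hand side; it is finite precisely because $M$ is non-parabolic, i.e.\ $\int_0^\infty t\,dt/V(\sqrt t)<\infty$.

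For the initial segment $0<t\leq r^2$ the key observation is that $V(\sqrt t)$ cannot be too small relative to $V(r)$. Since by Lemma \ref{comp} the ratio $V(\rho)/\rho^{2m}$ is non-increasing, taking $\rho=\sqrt t\leq r$ gives $V(\sqrt t)\geq (t/r^2)^m V(r)$. Substituting this lower bound and then making the change of variables $s=r^2/t$ yields
\[
\int_0^{r^2}\frac{e^{-\frac{r^2}{\mu t}}}{V(\sqrt t)}\,dt\;\leq\;\frac{r^{2m}}{V(r)}\int_0^{r^2}\frac{e^{-\frac{r^2}{\mu t}}}{t^m}\,dt\;=\;\frac{r^2}{V(r)}\int_1^\infty e^{-\frac{s}{\mu}}s^{m-2}\,ds,
\]
and the last integral is a finite constant $C_\mu$ depending only on $\mu$ and $m$ (it converges at infinity because of the exponential, and at $s=1$ trivially). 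Adding the two estimates gives the asserted bound with this $C_\mu$.

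There is no serious obstacle here; this is a routine split-and-estimate. The only points requiring a little care are: matching the powers of $r$ in the substitution $s=r^2/t$ so that exactly one factor $r^2$ survives in front of $1/V(r)$, checking that $e^{-s/\mu}s^{m-2}$ is integrable on $[1,\infty)$ for every $m\geq 1$, and recording that the finiteness of $\int_r^\infty t\,dt/V(t)$ (hence the meaningfulness of the right-hand side) is exactly the non-parabolicity hypothesis. The restriction $r>1$ is not actually needed for the argument but is harmless.
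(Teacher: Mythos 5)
Your argument is correct and follows essentially the same route as the paper: split the integral at $t=r^2$, bound the tail by discarding the Gaussian factor, and control the initial segment via the Bishop--Gromov comparison with Euclidean space. The only (cosmetic) difference is on $[0,r^2]$, where you apply the comparison $V(\sqrt t)\geq (t/r^2)^m V(r)$ directly and substitute $s=r^2/t$, whereas the paper substitutes $s=r^4/t$ first and then compares twice; your version is slightly cleaner and, as you note, does not actually need $r>1$.
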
 
\begin{proof}
 For $r>0,$ we have  
 \begin{eqnarray*}
 \int_0^\infty\frac{e^{-\frac{r^2}{\mu t}}}{V(\sqrt{t})}dt&=& \int_0^{r^2}\frac{e^{-\frac{r^2}{\mu t}}}{V(\sqrt{t})}dt
 +\int_{r^2}^\infty\frac{e^{-\frac{r^2}{\mu t}}}{V(\sqrt{t})}dt \\
 &\leq & \int_0^{r^2}\frac{e^{-\frac{r^2}{\mu t}}}{V(\sqrt{t})}dt
 +\int_{r^2}^\infty\frac{dt}{V(\sqrt{t})} \\
 &=& \int_0^{r^2}\frac{e^{-\frac{r^2}{\mu t}}}{V(\sqrt{t})}dt+2\int_{r}^\infty\frac{tdt}{V(t)}.
  \end{eqnarray*}
  Set
  $$I=\int_0^{r^2}\frac{e^{-\frac{r^2}{\mu t}}}{V(\sqrt{t})}dt.$$
Putting  $s=r^4/t$ to get     
 \begin{eqnarray*}
I &=& r^4\int_{r^2}^\infty\frac{e^{-\frac{s}{\mu r^2}}}{V(r^2/\sqrt{s})}\frac{ds}{s^2}.
 \end{eqnarray*}
Since $s\geq r^2,$ it yields from  Lemma \ref{comp} that    
 $$\frac{V(r^2/\sqrt{s})}{V(\sqrt s)}=\frac{V(\sqrt t)}{V(\sqrt s)}\geq\left(\frac{\sqrt t}{\sqrt{s}}\right)^{2m}=\frac{t^{m}}{s^m}=\frac{r^{4m}}{s^{2m}}.$$
That is,  
$$V\left(\frac{r^2}{\sqrt{s}}\right)=V(\sqrt t)\geq \frac{r^{4m}}{s^{2m}}V(\sqrt s).$$
Thus, we have 
 \begin{eqnarray*}
I \leq\frac{1}{r^{4m-4}}\int_{r^2}^\infty \frac{s^{2m-2}e^{-\frac{s}{\mu r^2}}}{V(\sqrt{s})}ds 
\leq \frac{1}{r^{4m-4}V(r)}\int_{r^2}^\infty s^{2m-2}e^{-\frac{s}{\mu r^2}}ds.
 \end{eqnarray*}
 By integration-by-parts formula, it is not difficult to deduce  that   there exists a constant  $C_\mu>0$ such that 
 $$\int_{r^2}^\infty s^{2m-2}e^{-\frac{s}{\mu r^2}}ds\leq C_\mu r^{4m-2}$$ 
holds for all $r>1.$  Hence,  we have   
 $I\leq C_\mu r^2/V(r)$
for $r>1.$ This completes the proof.
\end{proof}
      
  \subsection{Proofs}~

\emph{Proof of Theorem $\ref{main2}$}

Since $e^{-r^2/bt}$ is  decreasing in $r>0,$
we have 
$$\int_0^\infty\frac{e^{-\frac{r^2}{bt}}}{V_{\rm min}(\sqrt{t})}dt\leq C_0$$
for some constant $C_0>0.$ Since all $M_j^,s$ have  Ricci curvature bounded from below by a constant,  the Ricci curvature of $M$   can be  bounded from below by a  constant $\kappa_0<0.$  Denote by  $V(\kappa_0, r)$  the Riemannian volume of a geodesic ball with radius $r$ in the space form of dimension $2m$ with constant sectional  $\kappa_0.$ It yields from  Lemma \ref{comp} that
      \begin{eqnarray*} 
V(r)&\leq& V(\kappa_0, r) \\
&=&\omega_{2m-1}\int_0^r\left(\frac{\sinh\sqrt{-\kappa_0}}{\sqrt{-\kappa_0}}\right)^{2m-1}dt,
      \end{eqnarray*}
where $\omega_{2m-1}$ is the Riemannian area of the unit ball in $\mathbb C^{m}.$ By  
      \begin{eqnarray*}
\int_0^r\left(\frac{\sinh\sqrt{-\kappa_0}}{\sqrt{-\kappa_0}}\right)^{2m-1}dt&\leq& \int_0^r\left(\frac{e^{\sqrt{-\kappa_0}t}}{\sqrt{-\kappa_0}}\right)^{2m-1}dt \\
&\leq& \frac{e^{(2m-1)\sqrt{-\kappa_0}r}}{(2m-1)(-\kappa_0)^m},
      \end{eqnarray*}
we obtain 
$$V_{\max}(r)\leq V(r)\leq  \frac{\omega_{2m-1}}{(2m-1)(-\kappa_0)^m}e^{(2m-1)\sqrt{-\kappa_0}r}.$$
Thus, we are led to
$$ \Xi(r, \delta, \kappa)\leq \frac{C_0\left(|\kappa|+r^{-1}\right)}{\left((2m-1)(-\kappa_0)^mr\displaystyle\int_0^\infty e^{-\frac{r^2}{at}-(2m-1)\sqrt{-\kappa_0 t}}\frac{dt}{t}\right)^{1+\delta}}.  
$$
For $r>1,$ we have 
      \begin{eqnarray*}
&& \int_0^1 e^{-\frac{r^2}{at}-(2m-1)\sqrt{-\kappa_0 t}}\frac{dt}{t}  \\
&\geq& e^{-(2m-1)\sqrt{-\kappa_0}}\int_0^1 e^{-\frac{r^2}{at}}\frac{dt}{t} \\
&=& e^{-(2m-1)\sqrt{-\kappa_0}}\int_{r^2}^\infty e^{-\frac{t}{a}}\frac{dt}{t} \\
&=& e^{-(2m-1)\sqrt{-\kappa_0}}\left(ar^{-2}e^{-\frac{r^2}{a}}-\int_{r^2}^\infty e^{-\frac{t}{a}}\frac{dt}{t^2}\right) \\
&\geq& ae^{-(2m-1)\sqrt{-\kappa_0}}r^{-2}e^{-\frac{r^2}{a}}-e^{-(2m-1)\sqrt{-\kappa_0}}\int_{r^2}^\infty e^{-\frac{t}{a}}\frac{dt}{t} \\
&\geq& ae^{-(2m-1)\sqrt{-\kappa_0}}r^{-2}e^{-\frac{r}{a}}-\int_0^1 e^{-\frac{r^2}{at}-(2m-1)\sqrt{-\kappa_0 t}}\frac{dt}{t}. 
      \end{eqnarray*}
That is,     for $r>1$
      \begin{eqnarray*}
       \int_0^\infty e^{-\frac{r^2}{at}-(2m-1)\sqrt{-\kappa_0 t}}\frac{dt}{t}  
   &\geq&\int_0^1 e^{-\frac{r^2}{at}-(2m-1)\sqrt{-\kappa_0 t}}\frac{dt}{t} \\
   &\geq& \frac{a}{2}e^{-(2m-1)\sqrt{-\kappa_0}}r^{-2}e^{-\frac{r^2}{a}}. 
         \end{eqnarray*}
 Combining the above, we conclude that  
$$\log^+\Xi(r, \delta, \kappa)\leq O(r^2)$$ 
as $r\to\infty.$ Thus, we have the theorem proved due to Theorem \ref{main1}.

\emph{Proof of Theorem $\ref{main3}$}

Since  all $M_j^,s$ have non-negative Ricci curvature, we can take   $\kappa=0.$ 
Note  that  the central part  $K$ of $M$ is compact, 
 using Theorems \ref{est1} and \ref{est2}, there exist constants $c_1, c_2>0$ such that  
  $$\int_0^\infty\frac{e^{-\frac{r^2}{a t}}}{V_j(\sqrt{t})}\frac{dt}{t}\geq \frac{c_1}{V_j(r)}, \ \ \ \    j=1,\cdots,\vartheta$$ 
holds   for all $r>0,$ and  
      $$\int_0^\infty\frac{e^{-\frac{r^2}{b t}}}{V_j(\sqrt{t})}dt\leq \frac{c_2 r^2}{V_j(r)}+2\int_{r}^\infty\frac{tdt}{V_j(t)}, \ \ \ \    j=1,\cdots,\vartheta$$
holds   for all $r>1.$ Again,  since  $r^2=o(V_j(r))$ and $V_j(r)\leq O(r^{2m})$ as $r\to\infty$ due to  Lemma \ref{comp} and (\ref{dongg}), 
  we  obtain    
  \begin{equation}\label{dd1}
\Xi(r,\delta, \kappa)\leq \frac{\frac{c_2 r}{V_{\min}(r)}+2r^{-1}\displaystyle\int_{r}^\infty\frac{tdt}{V_{\min}(t)}}{\left(\frac{c_1r}{V_{\max}(r)}\right)^{1+\delta}} 
\leq O\left(r^{(2m-1)(1+\delta)}\right)
\end{equation}
as $r\to\infty.$ It is therefore 
$$\log^+\Xi(r,\delta, \kappa)\leq O(r)$$
as $r\to\infty.$  
By Theorem \ref{main1}, we have the theorem proved. 

\emph{Proof of Theorem $\ref{main4}$}

       Take $\kappa=0.$ Since  $M$ is  homogeneous,  we have  
  $$\Xi(r,\delta, \kappa)\simeq \frac{r^{-1}\displaystyle\int_0^\infty\frac{e^{-\frac{r^2}{bt}}}{V(\sqrt{t})}dt}{\left(r\displaystyle\int_0^\infty\frac{e^{-\frac{r^2}{at}}}{V(\sqrt{t})}\frac{dt}{t}\right)^{1+\delta}}.$$
By (\ref{dd1}), there is a constant $C>0$ such that 
      \begin{eqnarray*}
      \Xi(r,\delta, \kappa)&\leq& C\frac{\frac{r}{V(r)}+r^{-1}\displaystyle\int_{r}^\infty\frac{tdt}{V(t)}}{\left(\frac{r}{V(r)}\right)^{1+\delta}} \\
      &=&\left(\frac{V(r)}{r}\right)^{\delta}\left(1+\frac{V(r)}{r^2}\int_0^\infty\frac{tdt}{V(t)}\right),
            \end{eqnarray*}
      which leads  to 
              \begin{eqnarray*}
     \log^+\Xi(r,\delta,\kappa)  
     &\leq& \log^+\left(\frac{V(r)}{r^{2}}\int_r^\infty\frac{tdt}{V(t)}\right)+2\delta\log^+\frac{V(r)}{r}+O(1) \\
     &\leq& \log^+E(r)+(4m-2)\delta\log r+O(1).
      \end{eqnarray*}
      This proves the theorem by using Theorem \ref{main1}.

 \section{Examples}
 \vskip\baselineskip

\subsection{Hermitian    Manifolds with $(PH)$}~

Thanks to   Theorem \ref{equi1} and Theorem \ref{equi2}, if  $(PH)$ is satisfied for  complete weighted manifolds,
then   Property \ref{ap1} and Property \ref{ap2} can  be satisfied.

\begin{example}  \emph{Complete  Hermitian     manifolds with non-negative Ricci curvature}
    \begin{enumerate}
          \item[$\bullet$]  
Double volume property   follows  from  
   Bishop-Gromov's volume comparison theorem   (see Lemma \ref{comp});  Poincar\'e inequality 
 follows from the work  by P. Buser \cite{Bu}  (see 
\cite{Sa1}, Section 5.6.3 also);  Parabolic Harnack inequality  and two-sided heat kernel bounds were  obtained     by Li-Yau \cite{L-Y}. 
     \end{enumerate}
\end{example}

 \begin{example}   \emph{Convex domains in complex Euclidean spaces}  
    \begin{enumerate}
        \item[$\bullet$]  
The geodesic distance for convex domains  is the Euclidean distance (of course, the Neumann boundary condition  is  
assumed). 
   Poincar\'e inequality and double volume property are  well-known results.
Harnack inequality and two-sided  heat kernel bounds  
  can be derived  by the argument of 
  Li-Yau \cite{L-Y}.       \end{enumerate}
  \end{example} 
  
  \begin{example} 
  \emph{Complements of  convex domains in complex Euclidean spaces.}
    \begin{enumerate}
    \item[$\bullet$]  

Parabolic Harnack inequality was established   by Saloff-Coste-Gyrya \cite{S-C}. More generally, the parabolic Harnac inequality  holds on  inner uniform domains. 
We note that the complements of convex domains are inner uniform but the unbounded convex domains are not. 
     \end{enumerate}
\end{example}

  \begin{example}   \emph{Connected complex Lie groups with polynomial volume growth}  
    \begin{enumerate}
            \item[$\bullet$]  
  These  are connected complex  Lie groups such that for any compact neighborhood
$K$ of the origin,  we have $|K^n|\leq Cn^c$ for every  integer $n\geq1$ and  some constants $C, c>0,$ where 
  $K^n=\{g_1\cdots g_n: g_j\in K\}$ and $|K^n|$ is the Haar measure of $K^n.$ Note that nilpotent Lie groups are always of this type. 
  By a classical    result  of Y. Guivarc'h, there exists  a positive integer $N$ such that $c_1n^N\leq|K^n|\leq C_1n^N$ for every  integer $n\geq1$ and some constants $C_1, c_1>0.$ This gives 
   the double volume property for any left-invariant Hermitian metric.  Poincar\'e inequality and parabolic Harnack inequality are also obtained     in \cite{Sa1} and \cite{V-S-C}, respectively. Note that  double 
   volume property, Poincar\'e inequality and parabolic Harnack inequality also hold for the sub-Laplacians with the form
    $\Delta=\sum_{j=1}^kX_j^2$ associated with a family  of left-invariant vector fields $\mathcal F=\{X_1,\cdots, X_k\},$ as long as $\mathcal F$ generates the Lie algebra which is called the H\"ormander condition (see \cite{V-S-C}).  
      \end{enumerate}
\end{example} 
 
 \subsection{Complete K\"ahler  Connected Sums with $(PH)$}~

Let $M$ be a complete non-compact Riemannian  manifold. Let $K(x)$ be  the minimal sectional curvature of $M$ at $x\in M.$
Say that $M$ has \emph{asymptotically non-negative sectional curvature}, if there
 exist a reference point $o\in M$ and a continuous decreasing function $k>0$ on $(0,\infty)$ such that $K(x)\geq-k(\rho(x))$ holds for all $x\in M$ and 
$$\int_1^\infty tk(t)dt<\infty.$$
Such manifolds were studied in \cite{Ka1, L-T3}.  We say that $M$ satisfies the property  ``relative connectedness of the annuli"  if 
   \begin{enumerate}
   \item[$\bullet$]
$(RCA)$  \emph{Relative connectedness of the annuli}$:$ there exists $\alpha>0$ such that for all $r>0$ large enough  and 
 all $x,y\in \partial B(r),$  there is a   path  connecting $x$ and $y$  inside  the annulus 
$B(\alpha r)\setminus B(\alpha^{-1}r).$ 
   \end{enumerate}

  \begin{example}   \emph{Complete non-compact K\"ahler manifolds with 
  asymptotically non-negative sectional curvature}  
    \begin{enumerate}
            \item[$\bullet$]  
Such manifolds have a finite number of ends, and thus  can be written
as a connected sum  $M=M_1\#\cdots\#M_\vartheta$ of complete non-compact K\"ahler manifolds. Furthermore,
all $M_j^,s$  satisfy   double volume property,  parabolic Harnack inequality and relative connectedness  of the annuli. 
      \end{enumerate}
 \end{example}

  \begin{example}   \emph{Complete non-compact K\"ahler manifolds with non-negative Ricci curvature
outside a compact set}  
    \begin{enumerate}
            \item[$\bullet$]  
Such manifolds also have  finitely many ends (see \cite{Ca, L-T3}). 
 Hence,  it  can be written as  a connected sum $M=M_1\#\cdots\#M_\vartheta$ of complete non-compact K\"ahler manifolds.   
 Every  $M_j$ corresponds to one end of $M$
and  is  thought of as a manifold with non-negative Ricci curvature outside a compact set. 
 It is noted  that if one end $M_j$ satisfies relative connectedness of the annuli,  then it   satisfies  
 double volume property and Poincar\'e inequality (see \cite{S-C1}); furthermore,  it satisfies  parabolic Harnack inequality
 according to  Theorem \ref{equi2}.  
      \end{enumerate}
   \end{example}

\vskip\baselineskip

\noindent\textbf{Acknowledgements} The research work is supported by the Natural Science Foundation of Shandong Province of China
 ($\#$ZR202211290346).

\noindent\textbf{Conflict of Interest Statement.}  On behalf of all authors, the 
author states that there is no conflict of interest.

\vskip\baselineskip

\end{document}